\def\eu{\mathfrak}
\def\ma{\mathbb}
\def\mc{\mathcal}
\def\p{{\eu p}_{\infty}}
\def\f{{\ma F}_q^{\ast}}
\def\F{{\ma F}_q}
\def\P{\mathcal P}
\def\pK{\eu p}
\def\pL{\eu P}
\def\fin{\hfill\qed\bigskip}
\def\ge#1{#1_{\eu{gex}}}
\def\g#1{#1_{\eu{ge}}}
\def\*#1{#1^*}
\def\cicl#1#2{k(\Lambda_{{#1}^{#2}})}
\def\cic#1#2{{\ma Q}(\zeta_{{#1}^{#2}})}
\def\lra{\longrightarrow}
\def\ew#1#2{e_{\infty}^{\text{wild}}(#1|#2)}
\newcommand{\Gal}{\operatorname{Gal}}
\newcommand{\lcm}{\operatorname{lcm}}
\newcommand{\Id}{\operatorname{Id}}
\newcommand{\im}{\operatorname{im}}
\newcommand{\N}{\operatorname{N}}
\newcommand{\rest}{\operatorname{rest}}
\newcommand{\con}{\operatorname{con}}
\newcommand{\Hom}{\operatorname{Hom}}
\newcommand{\hooklongrightarrow}{\lhook\joinrel\longrightarrow}
\newcounter{bean}
\newcounter{2bean}
\def\l{
\begin{list}
{\rm{(\alph{bean}).-}}{\usecounter{bean}
\setlength{\labelwidth}{0.8in}
\setlength{\labelsep}{0.3cm}
\setlength{\leftmargin}{1cm}}}
\def\las{\begin{list}
	{{\rm {(\arabic{2bean})}}}{\usecounter{2bean}
\setlength{\labelwidth}{0.8in}
\setlength{\labelsep}{0.3cm}
\setlength{\leftmargin}{1cm}}}
\numberwithin{equation}{section}
\newtheorem{theorem}{Theorem}[section]
\newtheorem{proposition}[theorem]{Proposition}
\newtheorem{example}[theorem]{Example}
\newtheorem{remark}[theorem]{Remark}
\newtheorem{definition}[theorem]{Definition}
\newtheorem{corollary}[theorem]{Corollary}
\title[Genus fields of global fields]
{Genus fields of global fields}
\author[E. Ram\'irez]{Elizabeth Ram\'irez--Ram\'irez}
\address{Departamento de Control Autom\'atico\\
Centro de Investigaci\'on y de Estudios Avanzados del I.P.N.}
\email{eramirez@ctrl.cinvestav.mx}
\author[M. Rzedowski]{Martha Rzedowski--Calder\'on}
\address{Departamento de Control Autom\'atico\\
Centro de Investigaci\'on y de Estudios Avanzados del I.P.N.}
\email{mrzedowski@ctrl.cinvestav.mx}
\author[G. Villa]{Gabriel Villa--Salvador}
\address{Departamento de Control Autom\'atico\\
Centro de Investigaci\'on y de Estudios Avanzados del I.P.N.}
\email{gvillasalvador@gmail.com, gvilla@ctrl.cinvestav.mx}
\subjclass[2010]{Primary 11R58; Secondary 11R60, 11R29}
\keywords{Global fields, genus fields, extended
genus fields}
\date{January 21st., 2019}
\begin{document}

\begin{abstract}

In this paper we obtain the extended genus field of a 
global field. First we define the extended genus field
of a global function field and we obtain, via class field
theory, the description of the extended genus field of 
an arbitrary global function field. In the last part
of the paper we use the techniques for function fields
to describe the extended genus field of an arbitrary
number field.

\end{abstract}

\maketitle

\section{Introduction}\label{S1}

The study of {\em narrow} or {\em
extended genus fields} goes back to C.F. Gauss 
\cite{Ga1801} who introduced the genus concept in the context of
quadratic forms. During the first half of the last century, the
concept was imported to quadratic number fields. H. Hasse
\cite{Ha51} studied genus theory of quadratic number fields
by means of class field theory. H.W. Leopoldt \cite{Le53}
generalized the work of H. Hasse by introducing the concept
of genus field for a finite abelian extension of the rational field. Leopoldt
studied extended genus fields using the arithmetic of abelian fields
by means of {\em Dirichlet characters}. The first to introduce
the concept of genus field and of
extended genus field of a nonabelian finite extension of
the rational field was A. Fr\"olich who defined the concept of
genus field of an arbitrary finite extension of ${\ma Q}$
\cite{Fr59-1,Fr59-2}. For a number field $K$, Fr\"olich defined
the genus field $K$ (with respecto to the rational field ${\ma Q}$)
as $\g K:=KF$ where $F/{\ma Q}$ is the maximum abelian
extension such that $KF/K$ is unramified everywhere. Similarly,
the extended genus field is $\ge K=KL$ where $L/{\ma Q}$
is the maximum abelian extension such that $KL/K$
is unramified at the finite primes. Numerous authors have
studied genus fields and extended genus fields for finite
field extensions $K/{\ma Q}$ over ${\ma Q}$. 

In the case of number fields, the concepts of {\em Hilbert class field}
and of {\em extended Hilbert class field} are defined without any
ambiguity. The Hilbert class field $K_H$ and the extended Hilbert class
field $K_{H^+}$ of a number field $K/{\ma Q}$ are defined as the
maximum abelian unramified extension and the maximum abelian extension
unramified at the finite primes of $K$, respectively. In this way, the concepts of
genus field and of extended genus field are defined depending on the
concept of the Hilbert class field, and of the extended Hilbert class
field respectively. Namely, we have $K\subseteq \g K\subseteq
K_H$ and the Galois group $\Gal(K_H/K)$ is isomorphic
to the class group $Cl_K$ of $K$. The genus field $\g K$ corresponds to a 
subgroup $G_K$ of $Cl_K$ and we have $\Gal(\g K/K)\cong
Cl_K/G_K$. The degree $[\g K:K]$ is called the {\em genus 
number} of $K$ and $\Gal (\g K/K)$ is called the {\em genus group}
of $K$.
Similarly, $K\subseteq \ge K\subseteq K_{H^+}$ and $\ge K$ corresponds
to a subgroup $G_{K^+}$ of $\Gal(\ge K/K)\cong Cl_{K^+}$.

For global function fields the picture is different due to
the fact that there are several
concepts of Hilbert class field and of extended Hilbert class field,
depending in which aspect you are interested in. The direct definition
of the Hilbert class field $K_H$ of a global function field $K$ over
$\F$ as the maximum unramified abelian extension of $K$ has
the disadvantage of being of infinite degree over $K$ due to the extensions
of constants. In the extensions of constants, every prime is
eventually inert, so, if we are interested in a definition of a Hilbert
class field of finite degree over the base field, we must impose
some condition on the extension of constants. It seems
that the first one to consider extended genus fields in the case of
function fields was R. Clement in \cite{Cl92}, where she considered the
case of a cyclic tame extension $K/\F(T)$ of prime degree $l$ different
from the characteristic $p$ of $\F$. She developed the theory along
the lines of the case studied by Hasse in \cite{Ha51}.
Later on, S. Bae and J.K. Koo \cite{BaKo96} generalized the results
of Clement following the development given by Fr\"olich. They defined
the extended genus field for extensions of an arbitrary global
function field $K$ defining an analogue to the cyclotomic function
field extensions of $\F(T)$ given by the Carlitz module.

M. Rosen defined in \cite{Ro87} the Hilbert class field of a global function field
$K$ as the maximum abelian unramified extension of $K$ such that
a fixed nonempty finite set of prime divisors of $K$ decompose
fully. Using this definition of Hilbert class field, G. Peng \cite{Pe2003}
found the genus field of a cyclic tame extension of prime degree over
the rational function field $k=\F(T)$. His method used the
analogue for function fields
of the Conner--Hurrelbrink exact hexagon in number fields. The
wild prime case was presented by S. Hu and Y. Li in \cite{HuLi2010} where
they described explicitly the genus field of an Artin--Schreier extension of the
rational function field. In 
\cite{BaMoReRzVi2018, MaRzVi2013, MaRzVi2017} we developed
a theory of genus fields using the same concept of Hilbert class field.
In those papers, the ideas of Leopoldt using Dirichlet characters were
strongly used.

In this paper we are interested in describing, using class field theory,
the extended genus field of a finite separable extension of $k$. 
B. Angl\`es and J.-F. Jaulent in \cite{AnJa2000} established the general
theory of extended genus fields of global fields, either function
or numeric. We use a concept of extended genus field
for function fields different from the one defined by Angl\`es and Jaulent. With
this concept, when we describe the finite abelian extension $L$
where $\ge K=KL$, we may write $L$ as the composition of a sort
of $P$--components, where $P$ runs through the finite primes
of $k$. We consider these $P$--components $L_P$ 
as the composition of $E_P$,
the $P$--component of the projection $E$ of $L$ in a cyclotomic
function field given by the Carlitz module,
and a field $S$ which codifies the behavior of the infinite
prime. More precisely, $S$ codifies the wild ramification and the inertia
of the infinite prime of $k$. To this end, we need to consider the
id\`ele group corresponding to an arbitrary cyclotomic function
field. Finally, we describe the field $S$.

It turns out, that the same approach works for number fields. Indeed,
in the number field case, the problem is simpler because, by the 
Kronecker--Weber theorem, any abelian extension of ${\ma Q}$
is cyclotomic, that is, it is contained in a cyclotomic number
field. In the function field case, the maximum abelian extension
of $k$ consists of three components: one cyclotomic, one of constants
and one, also cyclotomic, where the infinite prime is totally and 
wildly ramified and it is the only
ramified prime. In the number field case, the ``$p$--components'' can be found
explicitly for $p\geq 3$ depending only on their degree over
${\ma Q}$. The case $p=2$ does not depend only on its degree
over ${\ma Q}$ since, for $n\geq 3$, the
cyclotomic field $\cic 2n$ is not cyclic. We give a criterion to
describe the $2$--component of $\ge K$.
Finally, we present some results on the behavior of the genus
field of a composition. For number fields, a similar result was
obtained by M. Bhaskaran in \cite{Bh79} and by X. Zhang \cite{Xi86}.

\section{Preliminaries and notations}\label{S2}

We denote by $k=\F(T)$ the global rational function
filed with field of constants the finite field of $q$ elements $\F$.
Let $R_T=\F[T]$ be the ring of polynomials, that is, the
ring of integers of $k$ with respect to the pole of $T$, the
infinite prime $\p$. Let $R_T^+:=\{P\in R_T\mid P \text{\ is
monic and irreducible}\}$. The elements of $R_T^+$ are the
{\em finite primes} of $k$ and $\p$ is the {\em infinite prime} of $k$.
For $N\in R_T$, $\Lambda_N$ denotes the $N$--th torsion of 
the Carlitz module. A finite extension $F/k$ will be called 
{\em cyclotomic} if there exists $N\in R_T$ such that
$k\subseteq F\subseteq \cicl N{}$.

Given a cyclotomic function field $E$, the group of Dirichlet characters
$X$ corresponding to $E$ is the group $X$ such that 
$X\subseteq\widehat{(R_T/\langle N\rangle)^*}\cong 
\widehat{\Gal(\cicl N{}/k)}=\Hom\big((R_T/\langle N\rangle)^*,
{\ma C}^*\big)$
and $E=\cicl N{}^{H}$ where $H=\cap_{\chi\in X}\ker \chi$.
For the basic results on Dirichlet characters, we refer to
\cite[\S 12.6]{Vil2006}.

For a group of Dirichlet characters $X$, let 
$Y=\prod_{P\in R_T} X_P$ where $X_P=\{\chi_P\mid 
\chi\in X\}$ and $\chi_P$ is the $P$--th component of $\chi$:
$\chi=\prod_{P\in R_T^+} \chi_P$. If $E$ is the field corresponding
to $X$, we define $\ge E$ as the field corresponding
to $Y$. We have that $\ge E$ is the maximum 
unramified extension at the finite
primes of $E$ contained in a cyclotomic function field.
The infinite prime $\p$ might be ramified in $\ge E/k$
(see \cite{MaRzVi2013}).

Let $L_n=\cicl {1/T^{n+1}}{}^{\f}$, $n\in{\ma N}\cup\{0\}$
where $\*{\F}\subseteq\Big(R_{1/T}/\langle 1/T^{n+1}\rangle
\Big)^*$, is isomorphic to the inertia group of 
the prime corresponding to $T$ in $k\big(\Lambda_{
1/T^{n+1}}\big)/k$. The prime
$\p$ is the only ramified prime in $L_n/k$ and it is totally and
wildly ramified. For $m\in {\ma N}$, and for any finite extension
$F/k$, $F_m$ denotes the extension of constants: $F_m=F
{\ma F}_{q^m}$. In particular $k_m={\ma F}_{q^m}(T)$.

Given a finite abelian extension $K/k$, there exist $n\in{\ma N}
\cup\{0\}$, $m\in{\ma N}$ and $N\in R_T$ such that $K\subseteq
L_n\cicl N{}k_m=:{_n\cicl N{}_m}$ (see \cite[Theorem 12.8.31]{Vil2006}).
We define $M:=L_nk_m$. In $M/k$ no finite prime of $k$ is
ramified.

For any extension $E/F$ of global fields and for any place $\pL$
of $E$ and $\pK=\pL\cap F$, the ramification index is denoted by
$e_{E/F}(\pL|\pK)=e(\pL|\pK)$ and the inertia degree is denoted
by $f_{E/F}(\pL|\pK)=f(\pL|\pK)$. When the extension is Galois
we denote $e_{\pK}(E|F)=e_{E/F}(\pL|\pK)$ and $f_{\pK}(E|F)
=f_{E/F}(\pL|\pK)$. In particular for any abelian extension $E/k$,
$e_P(E/k)$ and $f_P(E/k)$ denote the ramification index and the inertia
degree of $P\in R_T^+$ in $E/k$ respectively, and we denote by
$e_{\infty}(E/k)$ and $f_{\infty}(E/k)$ the ramification index and
the inertia degree of $\p$ in $E/k$. The symbol $\ew EF$ denotes
the wild ramification part of the infinite primes in $E/F$.
Similarly, $I_{E/F}(\pL|\pK)$
denotes the inertia group and $D_{E/F}(\pL|\pK)$ the decomposition
group.

For any finite separable extension $K/k$ the {\em finite primes}
of $K$ are the primes over the primes $P$ in $R_T^+$
and the {\em infinite primes} of $K$ are the primes over $\p$.
The {\em Hilbert class field} $K_H$ of $K$ is the maximum 
abelian extension of $K$
unramified at every finite prime of $K$ and where all the infinite
primes of $K$ are fully decomposed. The {\em genus field} $\g K$
of $K/k$ is the maximum extension of $K$ contained in $K_H$
and such that it is the composite $\g K=KF$ where $F/k$ is abelian.
We choose $F$ the maximum possible. In other words, $F$ is
the maximum abelian extension of $k$ contained in $K_H$.

Let $K/k$ be a finite abelian extension. We know that
$\g K=K\g {E^H}$ is the genus field of $K$ where $H$ is the
decomposition group of the infinite primes in
$KE/K$ and $E:=KM\cap \cicl N{}$ (see \cite{BaMoReRzVi2018}).
We also know that $K\g E/\g K$ and $KE/K$ are extensions
of constants.

For a local field $F$ with prime $\pK$, we denote by $F(\pK)
\cong \F$ the residue field of $F$, $U_{\pK}^{(n)}=1+\pK^n$
the $n$--th units of $F$, $n\in{\ma N}\cup\{0\}$.

Let $\pi=\pi_F=\pi_{\pK}$ be a uniformizer element for $\pK$, that is,
$v_{\pK}(\pi)=1$. Then the multiplicative group of $F$ 
satisfies $\*F\cong\langle\pi\rangle\times U_{\pK}
\cong \langle\pi\rangle\times \*\F\times U_{\pK}^{(1)}$ as groups.

\section{Extended genus field of a global function field}\label{S3}

Let $K/k$ be a finite abelian extension. Let $n\in{\ma N}\cup \{0\}$,
$m\in{\ma N}$ and $N\in R_T$ be such that $K\subseteq {_n\cicl N{}_m}$.
Let $E= KM\cap \cicl N{}$.
Define {\em the extended genus field of $K$} as
\[
\ge K:=K\ge E.
\]
Note that $\ge K/K$ is unramified at
the finite primes since $\ge E/E$ is unramified at the finite primes,
so that $K\ge E/KE$ is unramified at the finite primes and
we also know that $KE/K$
is unramified at the finite primes (\cite{BaMoReRzVi2018}).

\[
\xymatrix{
E\ar@{-}[rr]\ar@{-}[dd]&&KE\ar@{-}[dl]\ar@{-}[r]&KM=EM
\ar@{-}[dd]\ar@{-}[dll]\\
&K\ar@{-}[dl]\\ k\ar@{-}[rrr]&&&M
}
\]

Now $KM=EM/E$ is ramified at most at the infinite prime $\p$ and
the inertia of $\p$ in the extension $EM=KM/E$ 
is contained in $M$. Hence $EM/E$ is unramified
at the finite primes. The same holds for $KM/K$ and we have $K\subseteq
KE\subseteq KM=EM$. In short, $\ge K/K$ is unramified at the
finite primes. We also have that $\ge K/K$ is tamely ramified at $\p$
since $\ge E/k$ is tamely ramified at $\p$ so that $K\ge E/K$ is tamely
ramified at $\p$ and $\ge K=K\ge E$.

We also have $[\ge E:\g {E^H}]|q-1$ since $e_{\infty}(\ge E|E)|q-1$
where in general, for a finite abelian extension $L/F$,
$e_{\infty}(L/F)$ denotes the ramification index of
the infinite primes of $F$ in $L$, and $H\subseteq I_{\infty}(\ge E|k)$, where
in general $I_{\infty}(L|F)$ denotes the inertia group of
the infinite primes in the Galois 
extension $L/F$. In other words, the infinite primes of 
$\g {E^H}$ are fully ramified in the extension
$\ge E/\g {E^H}$. Thus we have
\[
[\ge E:\g{E^H}]=e_{\infty}(\ge E|\g{E^H})|e_{\infty}(\cicl N{}|k)=q-1.
\]

Therefore we have that $\ge K=K\ge E/K\g {E^H}=\g K$ is unramified
at the finite primes, the infinite primes are 
tamely ramified, and $[\ge K:\g K]|q-1$.

Now let $K/k$ be a finite and separable extension. We define $\ge K$
as $K\ge {F}$ where $\g K=KF$, that is,  
$F$ is the maximum abelian extension of $k$ contained
in the Hilbert class field $K_H$ of $K$
(see \cite{BaMoReRzVi2018}). Note that $\g F=F$.

Note that $[\ge K:\g K]|[\ge {F}:\g{F}]|q-1$ and the only possible 
ramified primes in $\ge K/\g K$ are the infinite primes
and they are tamely ramified.

\begin{definition}\label{D2.1}
For a finite separable extension $K/k$, we define the {\em extended
genus field of $K$} as $\ge K=K\ge F=KL$ where $L=\ge F$. We stress that 
we choose $F$ to be the maximum abelian extension of $k$ such
that $\g K=KF$.
\end{definition}

\begin{remark}\label{R2.2}{\rm{
For a finite prime $P\in R_T^+$, the tame part of the ramification of $P$ in
$\ge K/k$ can be obtained in the following way. Let $d_P=\deg P$
and let $e_P({L}|k)=e_P^{(0)}e_P^{(w)}=
e^{\text{tame}}(P)e_P^{(w)}$ where $\gcd
(p,e_P^{(0)})=1$ and $e_P^{(w)}=p^{\alpha_P}$ for some
integer $\alpha_P\geq 0$.
Since ${L}/k$ is abelian, we have $e^{(0)}|q^{d_P}-1$ (see
\cite[Proposici\'on 10.4.8]{RzeVil2017}).

Consider the extension $k_P^{(0)}/k$ where $P$ is the only finite prime ramified,
$k_P^{(0)}\subseteq {L}$ and $e_P^{(0)}|[k_P^{(0)}:k]$.
Note that $[\cicl P{}:k]=q^{d_P}-1$.
Then $Kk_P^{(0)}\subseteq \ge K$ and $Kk_P^{(0)}/K$ is 
unramfied at the finite primes. Thus, by Abhyankar Lemma, 
\begin{gather*}
e_P(K|k)=e_P(Kk_P^{(0)}|k)=\lcm[e_P(K|k), e_P(k_P^{(0)}|k)]=
\lcm[e_P(K|k),e_P^{(0)}]
\intertext{Therefore $e_P^{(0)}|e_P(K|k)$.
Since $e_P^{(0)}$ is the maximum with this property, it follows
that}
e^{\text{tame}}(P)=e_P^{(0)}=\gcd(q^{d_P}-1,e_P(K|k)).
\end{gather*}
}}
\end{remark}

We now obtain $\ge K=K L$ where $L$ satisfies 
$\ge {L}=L$, $L/k$
abelian and $L$ is the maximum with respect to this property.
Let $L\subseteq {_n\cicl N{}_m}$. If necessary, we may assume
$n,m,N$ are minimum, where $m\in {\ma N}$ is the conductor of constants
(see \cite{BaMoReRzVi2018}), $N\in R_T$ and $n\in{\ma N}\cup\{0\}$.
In this situation we define {\em the conductor of $L$} as $(m,N,n)$.

Let $E:=LM\cap \cicl N{}$. Then $EM=LM$ and
$\ge {L}=L=\ge E L$ so that $\ge E\subseteq L$ and $E=
\ge E$. In fact, $\ge E\subseteq L\subseteq L M=EM$, hence
$\ge E M\subseteq EM$ and from the Galois correspondence, $\ge E
\subseteq E$. Thus $\ge E=E$.

\[
\xymatrix{
E=\ge E\ar@{-}[r]\ar@{-}[d]&L\ar@{-}[r]\ar@{-}[dl]&L M=EM\ar@{-}[d]\\
k\ar@{-}[r]&S\ar@{-}[r]\ar@{-}[u]&M
}
\]

Let $S:=L\cap M$, $S\subseteq M=L_nk_m$. 

Let $X=Y=\prod_{P\in R_T^+} X_P$ be the group of Dirichlet characters
associated to $\ge E=E$. Then if $E_P$ is the field associated to $X_P$,
with $P\in R_T^+$, $E=\prod_{P\in R_T^+} E_P$ where $E_P=k$ for
almost all $P$ and if $P_1,\ldots, P_r$ are the finite primes ramified in 
$E/k$, $X_{P_i}\neq \{1\}$, $E_{P_i}\neq k$, $E_{P_i}\cap \prod_{j\neq i}
E_{P_j}=k$, $1\leq i\leq r$ and $E=E_{P_1}\cdots E_{P_r}$, $\widehat{
\Gal(E/k)}\cong X=Y=\prod_{P\in R_T^+} X_P=\prod_{P\in R_T^+}
\widehat{\Gal(E_P/k)}
\cong \prod_{i=1}^r\widehat{\Gal(E_{P_i}/k)}$. Thus
\[
\Gal(E/k)\cong \prod_{i=1}^r\Gal(E_{P_i}/k).
\]

For any nonempty finite subset ${\mc A}\subseteq R_T^+$, we 
define $E_{\mc A}:=\prod_{P\in{\mc A}}E_P$.
We may consider $E_P$ as the ``$P$--th primary component''
of $E$.
\[
\xymatrix{
E=\prod_{P\in R_T^+}E_P=\ge E\ar@{-}[rr]\ar@{-}[d]&& L=ES\ar@{-}[rr]
\ar@{-}[d]&& L M=EM\ar@{-}[d]\\
E_P\ar@{-}[d]\ar@{-}[rr]&&L_P=E_PS \ar@{-}[d]\ar@{-}[rr]&
&L_PM=E_PM\ar@{-}[d]\\
k\ar@{-}[rr]&&S=L\cap M\ar@{-}[rr]&&M
}
\]

We define $L_P:=E_PM\cap L$. We have that $E_P\subseteq E
\subseteq L$ and $E_P\subseteq E_PM$. Therefore $E_P\subseteq
L_P$. From the Galois correspondence,
we have 
\[
L_P=E_PS. 
\]
For any nonempty finite subset ${\mc A}
\subseteq R_T^+$, we let $L_{\mc A}:=E_{\mc A}M\cap \cicl N{}$.
From the Galois correspondence we obtain $L_{\mc A}=E_{\mc A}
S$ and in particular
\[
L_{\mc A}=\Big(\prod_{P\in{\mc A}}E_P\Big)S=\prod_{P\in{\mc A}}
(E_P S)=\prod_{P\in{\mc A}} L_P.
\]

We have

\begin{proposition}\label{P2.3}
For any $A,B\in R_T\setminus\{0\}$, let $L_A:=
E_AM\cap L$, where $E_A:=\prod\limits_{\substack{P|A\\ P\in
R_T^+}}E_P$, that is, $E_A=E_{\mc A}$ and $L_A=L_{
\mc A}$, where ${\mc A}=\{P\in R_T^+\mid P|A\}$. Then we have 
\[
L_{AB}=L_AL_B.
\]

Furthermore, if $\gcd(A,B)=1$ we have $L_A\cap L_B=S
=L\cap M$.
\end{proposition}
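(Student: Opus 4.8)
The plan is to separate the two claims: the first is formal, while the second reduces to a single arithmetic fact together with bookkeeping inside a direct product of Galois groups.

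\emph{The identity $L_{AB}=L_AL_B$.} Writing $\mc A=\{P\in R_T^+\mid P\mid A\}$ and $\mc B=\{P\in R_T^+\mid P\mid B\}$, the monic irreducible divisors of $AB$ are exactly those in $\mc A\cup\mc B$, so $L_{AB}=L_{\mc A\cup\mc B}$. I would then invoke the multiplicativity already established before the statement, $L_{\mc C}=\prod_{P\in\mc C}(E_PS)=\prod_{P\in\mc C}L_P$ (with the convention $E_{\emptyset}=k$, $L_{\emptyset}=S$ for the case of a nonzero constant), to get
\[
L_{AB}=\prod_{P\in\mc A\cup\mc B}L_P=\Big(\prod_{P\in\mc A}L_P\Big)\Big(\prod_{P\in\mc B}L_P\Big)=L_AL_B,
\]
where the possibly nonempty overlap $\mc A\cap\mc B$ is harmless because $L_PL_P=L_P$. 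No arithmetic enters here.

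\emph{The intersection $L_A\cap L_B=S$.} Since $S\subseteq E_AS=L_A$ and likewise $S\subseteq L_B$, the inclusion $S\subseteq L_A\cap L_B$ is automatic, and the content is the reverse one. The key preliminary I would prove is $E\cap S=k$. Indeed $E\cap S$ is a subfield of $S\subseteq M=L_nk_m$, hence unramified at every finite prime of $k$; and it is also a subfield of the cyclotomic field $\cicl N{}$. But the only subextension of $\cicl N{}/k$ unramified at all finite primes is $k$ itself, because each $P_i\mid N$ is totally ramified in $\cicl {P_i^{a_i}}{}/k$, so the inertia groups at the finite primes generate $\Gal(\cicl N{}/k)$. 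Hence $E\cap S=k$, and a fortiori $E_A\cap S=E_B\cap S=k$.

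I would then run the Galois correspondence over $S$. All of $L_A,L_B,L$ are abelian over $S$, and since $E\cap S=k$ and $L=ES$ we get $\Gal(L/S)\cong\Gal(E/k)\cong\prod_i G_i$, where $G_i:=\Gal(E_{P_i}/k)$ and $P_1,\dots,P_r$ are the finite primes ramified in $E/k$. From $L_A=E_AS$ with $E_A\cap S=k$, the restriction $\Gal(L/S)\twoheadrightarrow\Gal(L_A/S)\cong\Gal(E_A/k)=\prod_{P_i\mid A}G_i$ has kernel $\Gal(L/L_A)=\prod_{P_i\nmid A}G_i$, and symmetrically $\Gal(L/L_B)=\prod_{P_i\nmid B}G_i$. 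The field $L_A\cap L_B$ corresponds to the subgroup of $\Gal(L/S)$ generated by these two. Here the hypothesis $\gcd(A,B)=1$ is decisive: no $P_i$ divides both $A$ and $B$, so for every $i$ either $P_i\nmid A$ or $P_i\nmid B$, whence each factor $G_i$ lies in at least one of $\Gal(L/L_A)$, $\Gal(L/L_B)$. The two subgroups therefore generate all of $\prod_iG_i=\Gal(L/S)$, which gives $L_A\cap L_B=S$.

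The only genuinely arithmetic step, and the one I expect to require the most care, is $E\cap S=k$; everything after it is bookkeeping in a finite direct product of Galois groups.
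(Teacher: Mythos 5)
Your proof is correct and follows essentially the same route as the paper's: both parts reduce to the Galois correspondence applied to the product decomposition $\Gal(E/k)\cong\prod_{i}\Gal(E_{P_i}/k)$, with the coprimality of $A$ and $B$ forcing the two relevant subgroups to generate everything (equivalently, $E_A\cap E_B=k$, which is what the paper invokes directly). The one step you make explicit that the paper leaves implicit is the linear disjointness $E\cap S=k$ (equivalently $\cicl N{}\cap M=k$), which you correctly justify by noting that the inertia groups of the finite primes generate $\Gal(\cicl N{}/k)$ while $M/k$ is unramified at all finite primes.
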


\begin{proof} It remains to consider the case $\gcd(A,B)=1$.
We have $E_A=\prod_{P|A} E_P$, $E_B=\prod_{P|B}E_P$
and $\{P\in R_T^+\mid P|A\}\cap
 \{P\in R_T^+\mid P|B\}=\emptyset$. Therefore $E_A\cap E_B
=k$ and $kL\cap M=L\cap M=S$. The result follows from the Galois
correspondence.
\end{proof}

Now, for $P\in R_T^+$, $L_P=E_PM\cap L\supseteq M\cap 
L=S$ and $L_P\neq S\iff P\in\{P_1,\ldots,P_r\}$. In fact,
$L_P=E_PM\cap L\neq S\iff E_PM\neq M\iff E_P\neq k\iff
P\in\{P_1,\ldots,P_r\}$.

Finally, $E=\prod_{P\in R_T^+}E_P=\prod_{i=1}^rE_{P_i}$.
Therefore, since $EM=LM$, in particular $L\subseteq EM$. We have
\begin{gather*}
\begin{align*}
L&=EM\cap L=\big(\prod_{i=1}^rE_{P_i}\big) M\cap L=
\prod_{i=1}^r(E_{P_i}M\cap L)=\prod_{i=1}^rL_{P_i}\\
&=\prod_{i=1}^rL_{P_i}\cdot
\prod_{P\notin \{P_1,\ldots,P_r\}}S=\prod_{P\in R_T^+}L_P.
\end{align*}
\intertext{Thus}
L=\prod_{i=1}^rL_{P_i}=\prod_{P\in R_T^+}L_P.
\end{gather*}

We have proved

\begin{theorem}\label{T2.4}
For $A\in R_T$, we define
$L_A=E_AM\cap L$. Let $S=L\cap M$. We have
\las
\item For all $A, B\in R_T$, $L_{AB}=L_AL_B$.

\item $L_A\supseteq S$ for all $A\in R_T$ and $L_A=S\iff
P_i\nmid A$ for all $1\leq i\leq r$.

\item $L_A\cap L_B=S$ for all $A,B\in R_T$ such that
$\gcd(A,B)=1$.

\item $L=\prod_{P\in R_T^+}L_P=\prod_{i=1}^r L_{P_i}$.
$\fin$
\end{list}
\end{theorem}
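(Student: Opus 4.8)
The plan is to reduce all four assertions to the identity $L_A=E_AS$ established above from the Galois correspondence, together with the bookkeeping facts $E_A=\prod_{P\mid A}E_P$ and $E_{AB}=E_AE_B$ and the primary decomposition $E=\prod_{i=1}^rE_{P_i}$. Granting these, part (1) is a one–line computation valid for every $A,B\in R_T$ (with the convention that every $P\in R_T^+$ divides $0$, so that $E_0=E$): since $P\mid AB$ exactly when $P\mid A$ or $P\mid B$, we have $E_{AB}=E_AE_B$, hence
\[
L_{AB}=E_{AB}S=(E_AE_B)S=(E_AS)(E_BS)=L_AL_B .
\]
In particular $L_0=E_0S=ES=L$, consistent with $L\subseteq EM=LM$.

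For part (2), the inclusion $L_A\supseteq S$ is immediate from $L_A=E_AS$. To decide when the inclusion is an equality I would route through the single–prime statement already recorded, $L_P=S\iff E_P=k\iff P\notin\{P_1,\dots,P_r\}$, and combine it with the multiplicativity of part (1): iterating gives $L_A=\prod_{P\mid A}L_P$, a compositum of fields each containing $S$. If no $P_i$ divides $A$, every factor equals $S$ and so $L_A=S$; conversely, if some $P_i\mid A$, then $L_A\supseteq L_{P_i}\supsetneq S$, so $L_A\neq S$. The only subtlety here is the \emph{strict} inclusion $L_{P_i}\supsetneq S$ for a ramified prime, which is precisely the content of $E_{P_i}\neq k$. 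Part (3) is the coprime assertion of Proposition \ref{P2.3}.

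Finally, for part (4) I would substitute the primary decomposition into $L=ES$:
\[
L=ES=\Big(\prod_{i=1}^rE_{P_i}\Big)S=\prod_{i=1}^r(E_{P_i}S)=\prod_{i=1}^rL_{P_i},
\]
and then observe that adjoining the remaining factors $L_P=S$ for $P\notin\{P_1,\dots,P_r\}$ leaves the compositum unchanged, giving $\prod_{P\in R_T^+}L_P=\prod_{i=1}^rL_{P_i}=L$ as well. In this sense the theorem is an organized restatement of the facts assembled in the paragraphs above, and I do not expect a genuinely new obstacle: the real work is already contained in the identity $L_A=E_AS$ and in Proposition \ref{P2.3}, both resting on the linear disjointness of the primary components $E_{P_i}$ inside the abelian extension $EM/k$. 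Within the theorem itself the only point requiring a little care is the strict inequality $L_{P_i}\supsetneq S$ used in the equivalence of (2).
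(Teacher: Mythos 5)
Your proposal is correct and follows essentially the same route as the paper: everything is reduced to the identity $L_A=E_AS$ (equivalently $L_A=E_AM\cap L$) obtained from the Galois correspondence, the multiplicativity $E_{AB}=E_AE_B$, Proposition \ref{P2.3} for the coprime intersection, and the equivalence $L_P\neq S\iff E_P\neq k\iff P\in\{P_1,\dots,P_r\}$, exactly as in the paragraphs preceding the theorem. The only (harmless) divergence is your explicit convention for $A=0$ and your phrasing of part (4) via $L=ES$ rather than the paper's $L=EM\cap L=\prod_{i=1}^r(E_{P_i}M\cap L)$, which are the same computation.
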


In order to compute $L$ we need to know $S$, that is, the behavior
of $\p$, and also each $E_P$ for $P\in R_T^+$. First, we have that if
$P\in R_T^+$ is unramified in $K/k$, then $P$ is unramified in $E/k$
and therefore in $L/k$. Indeed, if $P$ were ramified in $L/k$, then
we would have 
\[
e_P(KL|K) =e_P(KL|K)e_P(K|k)=e_P(KL|k)=
e_P(KL|L)e_P(L|k)>1
\]
so that $e_P(KL|K)>1$ contrary to the definition of $L$.
\[
\xymatrix{
K\ar@{-}[r]\ar@{-}[d]&KL\ar@{-}[d]\\k\ar@{-}[r]&L}
\]

Thus, it suffices to know $E_{P_i}$, $1\leq i\leq r$ where
$P_1,\ldots,P_r$ are the finite primes ramified in $K/k$ and
therefore these are the only possible finite primes ramified in
$E/k$ and in $L/k$.
Now, in $E_P/k$ the only finite prime ramified is $P$
and $\p$ is tamely ramified. Note that the tame ramification
index of $\p$ in $E/k$ and in $L/k$ is the same. This is
a consequence of $L=ES$.

In general
we consider an arbitrary global function field $F$. Let $J_F$ be the 
id\`ele group of $F$ and let $C_F=J_F/\*F$ be the id\`ele class group
of $F$. To find $E_P$ for $P\in\{P_1,\ldots,P_r\}$,
we must find the id\`ele subgroup of $J_k$ corresponding to
$E_P$.  Now, since $E_P$ is cyclotomic and $P$ is the only
finite prime ramified, there exists $t\in{\ma N}$ such that $E_P
\subseteq \cicl Pt$. Therefore, the id\`ele group corresponding to
$E_P$ contains the id\`ele group corresponding to $\cicl Pt$.

\begin{theorem}\label{T3.1N} Let $N\in R_T$, $N=P_1^{\alpha_1}\cdots
P_r^{\alpha_r}$ with $P_1,\ldots,P_r\in R_T^+$ distinct. Set 
$R_T':=R_T^+\setminus\{P_1,\ldots,P_r\}$. Then,
the id\`ele group corresponding to $\cicl N{}$ is 
\[
{\mc X}_N=\prod_{i=1}^r U_{P_i}^{(\alpha_i)}\times \prod_{P\in R_T'} U_P
\times [(\pi)\times U_{\infty}^{(1)}],
\]
where $\pi=1/T$ is a uniformizer for $\p$.
\end{theorem}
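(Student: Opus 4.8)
The plan is to identify $\mathcal X_N$, through the Artin reciprocity map, with the idèle norm group $k^*\,\N_{\cicl N{}/k}(J_{\cicl N{}})$ attached to $\cicl N{}/k$, for which $\Gal(\cicl N{}/k)\cong J_k/\bigl(k^*\,\N_{\cicl N{}/k}(J_{\cicl N{}})\bigr)\cong(R_T/\langle N\rangle)^*$. First I would reduce to a prime power. Since the Carlitz construction is multiplicative, $\cicl N{}=\cicl{P_1}{\alpha_1}\cdots\cicl{P_r}{\alpha_r}$, and the norm group of a compositum of abelian extensions is the intersection of the norm groups, it suffices to treat $N=P^{\alpha}$ and then intersect. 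Here a direct check settles the reduction: the $P_j$--component of $\mathcal X_{P_i^{\alpha_i}}$ is all of $U_{P_j}$ when $i\neq j$, so $\bigcap_{i=1}^r\mathcal X_{P_i^{\alpha_i}}$ has $P_j$--component $U_{P_j}^{(\alpha_j)}$, full units $U_P$ at every $P\nmid N$, and the common component $(\pi)\times U_\infty^{(1)}$ at $\p$, which is exactly the asserted $\mathcal X_N$.

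For the prime power case $N=P^{\alpha}$ I would read off the local components by local class field theory, using that the conductor of $\cicl P\alpha/k$ is $P^{\alpha}\p$. At $P$ the extension is totally ramified with conductor exponent $\alpha$, so the local norm group meets the units in exactly $U_P^{(\alpha)}$; at every other finite prime $Q$ the extension is unramified, so $U_Q$ consists of local norms; and at $\p$ the ramification is tame of degree $q-1$ (as recorded, $e_\infty(\cicl N{}|k)=q-1$) with residue degree one, which puts $U_\infty^{(1)}$ in the local norm group together with one uniformizer. The reason no uniformizers appear at the finite primes of $\mathcal X_N$ is that $k=\F(T)$ has class number one ($\operatorname{Pic}^0(k)=0$): every finite prime is principal, so the finite uniformizers are already supplied by the diagonal image of $k^*$, while the single factor $(\pi)$ at $\p$ carries the degree map onto $\ma Z$.

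The delicate point, and what I expect to be the main obstacle, is the infinite prime: one must show its component is precisely $(\pi)\times U_\infty^{(1)}$ with the distinguished uniformizer $\pi=1/T$. The inclusion $U_\infty^{(1)}\subseteq$ (local norms) follows from tameness (conductor exponent one at $\p$), and $f_\infty=1$ forces a uniformizer into the local norm group; the subtlety is the sign, since the product of the $q-1$ conjugates of a local uniformizer equals $\pi$ only up to the factor $\prod_{c\in\f}c=-1$. This is resolved by the sign--normalized description of the Carlitz action at $\p$ (equivalently, by computing the local Artin map at the infinite prime explicitly), which fixes $1/T$ as the distinguished norm and hence gives the component $(\pi)\times U_\infty^{(1)}$.

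Finally I would close with an index count. Using $J_k^0=k^*\mathcal U$ with $\mathcal U=\prod_v U_v$ (again class number one), a computation in $J_k$ yields $J_k/(k^*\mathcal X_N)\cong \mathcal U/(\f\cdot\mathcal X_N^{0})$, where $\mathcal X_N^{0}=U_\infty^{(1)}\times\prod_i U_{P_i}^{(\alpha_i)}\times\prod_{P\nmid N}U_P$ is the degree--zero part and $\f$ is the diagonal copy of the constants. This quotient is $\bigl(\f\times\prod_i(R_T/\langle P_i^{\alpha_i}\rangle)^*\bigr)/\Delta(\f)\cong(R_T/\langle N\rangle)^*$, of order $[\cicl N{}:k]$. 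Since each local component of $\mathcal X_N$ has been exhibited as a local norm, $k^*\mathcal X_N$ is contained in the norm group, and as the two subgroups have the same index in $J_k$ they coincide, which is the assertion.
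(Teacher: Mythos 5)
Your argument is correct in outline, but it takes a genuinely different route from the paper's. The paper works globally and explicitly: it sets $U'=\prod_{Q\in R_T^+}U_Q\times[(\pi)\times U_{\infty}^{(1)}]$, defines an epimorphism $\psi_N\colon U'\to\Gal(\cicl N{}/k)$ by approximating the $P_i$--components of an id\`ele by a single polynomial $C$ (Chinese Remainder Theorem) and letting $C$ act on $\Lambda_N$, checks $\ker\psi_N={\mc X}_N$, and then identifies $U'/{\mc X}_N$ with $J_k/k^*{\mc X}_N$ by proving $U'\cap k^*{\mc X}_N={\mc X}_N$ (via the sign function $\phi_{\infty}$, whose kernel is exactly $(\pi)\times U_{\infty}^{(1)}$) and $J_k=U'k^*$ (the field cut out by $U'$ is an unramified constant extension with field of constants $\F$, hence equals $k$). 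You instead assemble the norm group place by place from local class field theory --- Lubin--Tate/conductor data at the $P_i$, unramifiedness at the other finite primes, tameness and $f_{\infty}=1$ at $\p$ --- and close with an index count; that count, $J_k/k^*{\mc X}_N\cong\prod_vU_v\big/\big(\f\cdot({\mc X}_N\cap\prod_vU_v)\big)\cong(R_T/\langle N\rangle)^*$, plays the role of the paper's surjectivity step $J_k=U'k^*$. The one genuinely delicate point, that the distinguished uniformizer at $\p$ is exactly $1/T$ and not a nontrivial constant multiple of it, you correctly isolate; it is settled by $\N(\lambda^{-1})=\big(\lambda^{q-1}\prod_{c\in\f}c\big)^{-1}=\big((-T)(-1)\big)^{-1}=1/T$ for $0\neq\lambda\in\Lambda_T$, the same sign phenomenon the paper packages into $\phi_{\infty}$. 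Your route buys a direct verification that $k^*{\mc X}_N$ is the norm group (local containment at every place plus equal index), at the cost of importing the local conductors; the paper's buys self-contained explicitness at the cost of a longer global computation. One small caution: the compositum/intersection reduction takes place in $C_k$, i.e.\ for the subgroups $k^*{\mc X}_{P_i^{\alpha_i}}$, and $\bigcap_i k^*{\mc X}_{P_i^{\alpha_i}}$ is a priori larger than $k^*\bigcap_i{\mc X}_{P_i^{\alpha_i}}$; since your closing argument verifies the local containments for $\cicl N{}$ itself and counts the index for the full $N$, nothing is lost, but that reduction should be presented as motivation rather than as a step of the proof.
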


\begin{proof}
Let $U':=\prod_{Q\in R_T^+}U_Q\times [(\pi)\times U_{\infty}^{(1)}]$.
We will give an epimorphism 
\[
\psi_N\colon U'\lra
\Gal(\cicl N{}/k)=:G_N
\] such that $\ker \psi_N={\mc X}_N$
and hence, $U'/{\mc X}_N\cong G_N$.

Let $\vec\xi\in U'$. Then 
$\xi_{P_i}\in U_{P_i}=\{\sum_{j=0}^{\infty} a_jP_i^j\mid a_j\in R_T/
\langle P_i\rangle\}$, $1\leq i\leq r$. Since $k$ is dense in 
the local field $k_{P_i}$,
there exists $Q_i\in R_T$ such that
$Q_i\equiv \xi_{P_i}\bmod P_i^{\alpha_i}$. By the Chinese
Residue Theorem, we have that there exists
$C\in R_T$ such that 
$C\equiv Q_i\bmod P_i^{\alpha_i}$, $1\leq i\leq r$
and so $C\equiv \xi_{P_i}\bmod P_i^{\alpha_i}$, $1\leq i\leq r$

Now, if $C_1\in R_T$ satisfies $C_1\equiv \xi_{P_i}
\bmod P_i^{\alpha_i}$, $1\leq i\leq r$, we have that $P_i^{\alpha_i}
|C-C_1$ for $1\leq i\leq r$. It follows that $N|C-C_1$ and thus
$C\in R_T$ is unique modulo $N$. On the other hand,
$v_{P_i}(\xi_{P_i})=0$, so that $P_i\nmid \xi_{P_i}$ 
and so we obtain that $\gcd(C,N)=1$. In this way we have that
$C\bmod N$ defines an element of $G_N=\Gal(\cicl N{}/k)$.

Given $\sigma\in G_N$, there exists $C\in R_T$ such that $\sigma
\lambda_N=\lambda_N^C$ where $\lambda_N$
is a generator de $\Lambda_N$. Let $\vec\xi\in U'$
with $\xi_{P_i}=C$, $1\leq i\leq r$ and $\xi_P=1=\xi_{\infty}$
for all $P\in R_T'$. Therefore $\vec\xi\mapsto C\bmod
N$ and $\psi_N$ es onto. Finally, $\ker\psi_N
=\{\vec\xi\in U'\mid\xi_{P_i}\equiv 1 \bmod P_i^{\alpha_i},
1\leq i\leq r\}={\mc X}_N$. So we have that $\psi_N$ is
an epimorphism and $\ker \psi_N={\mc X}_N$.

We will show that $U'/{\mc X}_N\cong J_k/{\mc X}_N\*k$.
We have the composition
\[
\xymatrix{
U'\ar@{^{(}->}[r]\ar@/_1pc/@{>}_{\mu}[rr]&
J_k\ar@{>>}[r]&J_k/{\mc X}_N \*k,
}
\]
with $\im \mu=U'{\mc X}_N\*k/{\mc X}_N\*k$ and $\ker \mu =
U'\cap {\mc X}_N\*k$.

Now, ${\mc X}_N\subseteq U'$ so that ${\mc X}_N
\subseteq U'\cap {\mc X}_N\*k$. Conversely, if $\vec\xi
\in U'\cap {\mc X}_N\*k$, the components of
$\vec\xi$ are given as
\begin{align*}
\xi_P&=a\cdot \beta_P, \quad P\in R_T, \quad
\vec\beta \in {\mc X}_N, \quad a\in \*k,\\
\xi_{\infty}&=a\cdot\beta_{\infty}, \quad
\beta_{\infty}\in (\pi)\times U_{\infty}^{(1)}.
\end{align*}

Since $\xi_P,\beta_P\in U_P$ we have $v_P(\xi_P)=
v_P(\beta_P)=0$ for all $P\in R_T$. It follows that 
$v_P(a)=0$ for all $P\in R_T$. Furthermore, since
$\deg a=0$ we have $v_{\infty}(a)=0$ and so $a\in\*\F$.

Now $\xi_{\infty}, \beta_{\infty}
\in(\pi)\times U_{\infty}^{(1)}=\ker \phi_{\infty}$, where
$\phi_{\infty}\colon \*{k_{\infty}}\lra \*{\F}$ is the sign function
of $\*{k_{\infty}}$ defined as $\phi_{\infty}(\lambda\pi^n u)=
\lambda$ where $\lambda\in \*{\F}$, $n\in{\ma N}$ and
$u\in U_{\infty}^{(1)}$.
Thus $1=\phi_{\infty}(\xi_{\infty})=
\phi_{\infty} (a) \phi_{\infty} (\beta_{\infty})=
\phi_{\infty} (a)$ and so
$a=1$. It follows that $\vec\xi\in{\mc X}_N$.
Therefore $\ker \mu={\mc X}_N$ and we obtain a monomorphism
$U'/{\mc X}_N\stackrel{\theta}
{\hooklongrightarrow} J_k/{\mc X}_N \*k$.

It remains to prove that $\theta$ is surjective. So, we 
must prove that $J_k=U'{\mc X}_N \*k
=U'\*k$. We have that $U'$ corresponds
to the maximum unramified extension at every finite prime.
Let $L/k$ be this extension. Since $U_{\infty}^{
(1)}$ corresponds to the first ramification group, and in
this way it corresponds to the wild ramification of $\p$,
it follows that in $L/k$ there is at most a ramified prime ($\p$),
being tamely ramified and of degree $1$.
From \cite[Proposici\'on 10.4.11]{RzeVil2017}, we obtain that 
$L/k$ is an extension of constants.

Finally, since $1=\min\{n\in{\ma N}\mid \deg\vec
\alpha=n, \vec\alpha\in U'\}$, the field of constants of $L$ is
$\F$ (see \cite[Teorema 17.8.6]{RzeVil2017})
and therefore $L=k$. It follows that $C_k
\cong U'$, that is, $J_k/\*k\cong U'$ and thus
$J_k=U' \*k$.
\end{proof}

\begin{corollary}\label{C3.2N} With the above notations, we have that
for a cyclotomic field $k\subseteq F\subseteq \cicl N{}$, the
id\`ele group corresponding to $F$ is of the form $R_F\times 
\prod_{Q\in R_T'}U_Q\times [(\pi)\times U_{\infty}^{(1)}]$ with 
$R_F$ a group satisfying $\prod_{i=1}^r U_{P_i}^{(\alpha_i)}\subseteq
R_F\subseteq \prod_{i=1}^r U_{P_i}$.
\end{corollary}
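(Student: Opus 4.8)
The plan is to deduce this corollary directly from Theorem~\ref{T3.1N} by exploiting the Galois correspondence between intermediate cyclotomic fields $k\subseteq F\subseteq\cicl N{}$ and subgroups of $J_k$ containing $\mc X_N$. Since $F$ is an intermediate field of $\cicl N{}/k$, and class field theory identifies $\Gal(\cicl N{}/k)\cong J_k/\mc X_N\*k$ (which is what the isomorphism $U'/\mc X_N\cong G_N$ from Theorem~\ref{T3.1N} encodes), the field $F$ corresponds to some id\`ele group $\mc X_F$ with $\mc X_N\subseteq\mc X_F\subseteq J_k$, and by the full correspondence $\mc X_F$ is obtained by pulling back a subgroup of $\Gal(\cicl N{}/k)$. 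The first step is therefore to write $\mc X_F\cap U' = $ the preimage under $\psi_N$ of the subgroup $\Gal(\cicl N{}/F)\subseteq G_N$, and to observe that since $\mc X_N=\ker\psi_N\subseteq\mc X_F$, the whole group $\mc X_F$ is determined by its image in $G_N$.

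Next I would analyze the explicit product shape of $\mc X_N$ displayed in Theorem~\ref{T3.1N}. The homomorphism $\psi_N$ acts trivially on the factors $\prod_{Q\in R_T'}U_Q$ and on $[(\pi)\times U_\infty^{(1)}]$ — this is exactly what the proof of Theorem~\ref{T3.1N} shows, since $\xi_Q=1=\xi_\infty$ were chosen for the surjectivity argument and those factors lie in $\ker\psi_N=\mc X_N$. Consequently any subgroup lying between $\mc X_N$ and $U'$ must contain these two factors in full, and can differ from $\mc X_N$ only in the $\prod_{i=1}^r U_{P_i}^{(\alpha_i)}$ coordinates. Writing $R_F$ for the projection of $\mc X_F\cap U'$ onto $\prod_{i=1}^r U_{P_i}$, I obtain the claimed decomposition
\[
\mc X_F=R_F\times\prod_{Q\in R_T'}U_Q\times[(\pi)\times U_\infty^{(1)}],
\]
with the two outer factors forced to appear untruncated.

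The remaining step is to establish the containment $\prod_{i=1}^r U_{P_i}^{(\alpha_i)}\subseteq R_F\subseteq\prod_{i=1}^r U_{P_i}$. The lower containment is immediate from $\mc X_N\subseteq\mc X_F$, since the $P_i$-component of $\mc X_N$ is precisely $U_{P_i}^{(\alpha_i)}$. The upper containment follows because $\mc X_F\subseteq U'$ forces $R_F\subseteq\prod_{i=1}^r U_{P_i}$, as each $P_i$-component of $U'$ is the full unit group $U_{P_i}$. Thus $R_F$ is a subgroup of $\prod_{i=1}^r U_{P_i}$ containing $\prod_{i=1}^r U_{P_i}^{(\alpha_i)}$, exactly as asserted.

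The one point requiring genuine care — the main obstacle — is justifying that $\mc X_F$ is a \emph{full} direct factor in the $R_T'$ and infinite coordinates, rather than merely containing those factors after intersecting with $U'$. Here I would invoke the identity $J_k=U'\*k$ proved at the end of Theorem~\ref{T3.1N}, which guarantees that every id\`ele class is represented inside $U'$, so that $\mc X_F$ is determined by $\mc X_F\cap U'$ together with the principal id\`eles already accounted for; the product structure on $U'$ then transports cleanly to $\mc X_F$. This reduces the corollary to a purely group-theoretic statement about subgroups of the explicit product $U'$ lying above $\mc X_N$, which the preceding two steps settle.
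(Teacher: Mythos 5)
Your proposal is correct and follows essentially the same route as the paper: both deduce the corollary from Theorem~\ref{T3.1N} via the Galois/class-field correspondence, observing that $U'/{\mc X}_N$ is supported only in the $P_i$-components (being isomorphic to $\prod_{i=1}^r U_{P_i}/\prod_{i=1}^r U_{P_i}^{(\alpha_i)}\cong\Gal(\cicl N{}/k)$), so the subgroup corresponding to $F$ differs from ${\mc X}_N$ only there and is of the stated form. Your extra care about why the $R_T'$ and infinite factors appear untruncated, and about recovering $\Delta$ from $\Delta\cap U'$ via $J_k=U'\*k$, merely makes explicit what the paper leaves implicit.
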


\begin{proof}
Let $\Delta$ be the id\`ele group corresponding to $F$. Thus
$\Delta\supseteq {\mc X}_N$. Now 
\[
\frac{
\prod_{i=1}^r U_{P_i}}{\prod_{i=1}^r U_{P_i}^{(\alpha_i)}}\cong
\big(R_T/\langle N\rangle\big)^*\cong \Gal(\cicl N{}/k).
\]
Therefore
$\Gal(\cicl N{}/F)\cong\frac{\Theta}{\prod_{i=1}^r U_{P_i}^{(\alpha_i)}}
<\Gal(\cicl N{}/k)$
for a group $\Theta\subseteq \prod_{i=1}^r U_{P_i}$. The group
$\Theta$ corresponds to $R_F$. The result follows.
\end{proof}

\begin{corollary}\label{C3.3N} Let $P\in R_T^+$. Then
the id\`ele group corresponding to $E_P$ is of the form
\[
\Delta_P=H_P\times \prod_{\substack{Q\neq P\\Q\in R_T^+}}
U_Q\times [(\pi)\times U_{\infty}^{(1)}],
\]
where $U_P^{(t)}\subseteq H_P\subseteq U_P$ for some $t\in
{\ma N}$.
\end{corollary}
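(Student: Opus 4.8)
The plan is to recognize this corollary as the single-prime specialization of Corollary \ref{C3.2N}; essentially all the substantive work has already been done there and in Theorem \ref{T3.1N}. First I would recall, from the discussion immediately preceding Theorem \ref{T3.1N}, that $E_P$ is cyclotomic and that $P$ is the only finite prime ramified in $E_P/k$. This holds because $E_P$ is the field cut out by the $P$-components $X_P$ of the group of Dirichlet characters, whose conductors are powers of $P$; hence $E_P$ is a cyclotomic field whose conductor is a power of $P$, with $P$ its unique ramified finite prime. Consequently there exists $t\in{\ma N}$ with $E_P\subseteq\cicl Pt$. This is the only input specific to $E_P$ that the argument requires.

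Next I would apply Corollary \ref{C3.2N} with $N=P^t$. In this case the factorization $N=P_1^{\alpha_1}\cdots P_r^{\alpha_r}$ degenerates to $r=1$, $P_1=P$, $\alpha_1=t$, so that $\cicl N{}=\cicl Pt$ and the set $R_T'$ of Theorem \ref{T3.1N} becomes $R_T^+\setminus\{P\}$. Since $k\subseteq E_P\subseteq\cicl Pt$, the corollary applies directly to $F=E_P$ and yields that the idèle group corresponding to $E_P$ has the form
\[
\Delta_P=R_{E_P}\times\prod_{\substack{Q\neq P\\ Q\in R_T^+}}U_Q\times[(\pi)\times U_{\infty}^{(1)}],
\]
where the bounds $\prod_{i=1}^r U_{P_i}^{(\alpha_i)}\subseteq R_F\subseteq\prod_{i=1}^r U_{P_i}$ of Corollary \ref{C3.2N} collapse, for $r=1$, to $U_P^{(t)}\subseteq R_{E_P}\subseteq U_P$. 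Setting $H_P:=R_{E_P}$ gives exactly the asserted description.

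I do not expect a genuine obstacle here, as the statement is a transcription of the preceding corollary to the case of a single ramified prime. The only point deserving a moment's care is the existence of the exponent $t$ with $E_P\subseteq\cicl Pt$; once one notes that the $P$-components $X_P$ have conductor a power of $P$, this is immediate, and the rest of the verification is purely formal bookkeeping of the index set $R_T'=R_T^+\setminus\{P\}$.
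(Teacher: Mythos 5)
Your proof is correct and follows the same route as the paper: the paper likewise observes that $E_P$ is cyclotomic with $P$ its only ramified finite prime, so $E_P\subseteq \cicl Pt$ for some $t$, and then invokes Corollary \ref{C3.2N}. Your additional remarks on the Dirichlet-character justification and the explicit specialization $r=1$, $\alpha_1=t$ are just a more detailed write-up of the same argument.
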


\begin{proof} Since $E_P$ is cyclotomic and the only
finite prime ramified is $P$, there exists $t\in {\ma N}$ such that
$E_P\subseteq \cicl Pt$. The result follows from
Corollary \ref{C3.2N} 
\end{proof}

For each $P\in R_T^+$, $k_P$ denotes
the completion of $k$ at $P$ and $k_{\infty}$ denotes the
completion of $k$ at $\p$.
We recall the following result of class field theory.

\begin{theorem}\label{T2.5}
Let $F$ be a global field and let $R/F$ be the class field extension
corresponding to $H$, that is, $H$ is the open subgroup of $C_F$
such that $H=\N_{R/F} C_R$ and $\Gal(R/F)\cong C_F/H$. Let
$E/F$ be a finite separable extension. Then $ER/E$ is the class
field extension corresponding to the subgroup $\N_{E/F}^{-1}(H)$ of $C_E$.
\[
\xymatrix{
E\ar@{-}[rr]^{\N_{E/F}^{-1}(H)}\ar@{-}[d]&&ER\ar@{-}[d]\\
F\ar@{-}[rr]_H&&R.
}
\]
\end{theorem}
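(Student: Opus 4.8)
The plan is to realize both $R/F$ and $ER/E$ through the global reciprocity map and to compare them via the norm-compatibility of that map. First I would record that $ER/E$ is abelian: since the restriction map identifies $\Gal(ER/E)$ with $\Gal(R/(R\cap E))$, the latter being a subgroup of the abelian group $\Gal(R/F)$ (because $F\subseteq R\cap E\subseteq R$), the extension $ER/E$ is finite and abelian. Hence it is the class field over $E$ attached to its norm group $H':=\N_{ER/E}C_{ER}\subseteq C_E$, and the identification claimed by the theorem amounts to proving $H'=\N_{E/F}^{-1}(H)$. The same observation gives that $\rest\colon\Gal(ER/E)\hooklongrightarrow\Gal(R/F)$, $\sigma\mapsto\sigma|_R$, is injective; this is the only purely Galois-theoretic input and it is elementary.

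The heart of the argument is the commutativity of the square
\[
\xymatrix{
C_E\ar[rr]^{(\,\cdot\,,\,ER/E)}\ar[d]_{\N_{E/F}}&&\Gal(ER/E)\ar[d]^{\rest}\\
C_F\ar[rr]_{(\,\cdot\,,\,R/F)}&&\Gal(R/F),
}
\]
which expresses the functoriality (consistency) of the reciprocity map with respect to the norm: for $\alpha\in C_E$ one has $\rest\big((\alpha,ER/E)\big)=(\N_{E/F}\alpha,R/F)$. This is a standard theorem of global class field theory that I would cite rather than reprove. Granting it, I compute $H'$ as the kernel of the top map by a one-line diagram chase. For $\alpha\in C_E$ we have $(\alpha,ER/E)=1$ if and only if $\rest\big((\alpha,ER/E)\big)=1$, since $\rest$ is injective; by commutativity this equals $(\N_{E/F}\alpha,R/F)$, which is trivial exactly when $\N_{E/F}\alpha\in H$, because $H$ is by hypothesis the kernel of the reciprocity map of $R/F$. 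Thus $\alpha\in H'\iff\alpha\in\N_{E/F}^{-1}(H)$, giving $H'=\N_{E/F}^{-1}(H)$.

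Finally I would verify that $\N_{E/F}^{-1}(H)$ really is a norm group, i.e. an open subgroup of finite index in $C_E$: openness is immediate from the continuity of $\N_{E/F}$ together with the openness of $H$, while the injection $C_E/\N_{E/F}^{-1}(H)\hooklongrightarrow C_F/H$ induced by the norm shows the index is finite. The main obstacle is not the computation but the invocation of the norm-compatibility square; once that deep compatibility of the reciprocity law is taken as known, and once the injectivity of $\rest$ is in hand, the identification of the norm group of $ER/E$ with $\N_{E/F}^{-1}(H)$ follows immediately.
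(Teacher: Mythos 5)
Your proposal is correct and follows essentially the same route as the paper: both rest on the norm-compatibility square for the reciprocity/Artin maps, the injectivity of the restriction $\Gal(ER/E)\hookrightarrow\Gal(R/F)$, and the same one-line kernel computation identifying $\N_{ER/E}C_{ER}$ with $\N_{E/F}^{-1}(H)$. The extra checks you include (that $ER/E$ is abelian and that $\N_{E/F}^{-1}(H)$ is open of finite index) are sound refinements that the paper leaves implicit.
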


\begin{proof}
We have that if $E/F$ and $E'/F'$ are two finite abelian extensions
of global fields with $F\subseteq F'$ and $E\subseteq E'$ of global fields, and
if $\psi_{E/F}$ denotes the Artin map of the extension $E/F$
then we have the following commutative diagram
\[
\xymatrix{
C_{F'}\ar@{->}[rr]^{\psi_{E'/F'}}\ar@{->}[d]_{\N_{F'/F}}&
&\Gal(E'/F')\ar@{->}[d]^{\rest}\\
C_F\ar@{->}[rr]_{\psi_{E/F}}&&\Gal(E/F)
}
\]
where $\rest$ denotes the restriction map (see \cite[Proposici\'on
17.6.39]{RzeVil2017}).

We apply this result to our situation, that is, we have the commutative
diagram
\[
\xymatrix{
C_{E}\ar@{->}[rr]^{\psi_{ER/E}}\ar@{->}[d]_{\N_{E/F}}&
&\Gal(ER/E)\ar@{->}[d]^{\rest}\\
C_F\ar@{->}[rr]_{\psi_{R/F}}&&\Gal(R/F)
}
\]

Let $\psi_{ER/E}\colon
C_E\lra \Gal(ER/E)$ be the Artin map. The norm group corresponding
to $ER/E$ is $\ker\psi_{ER/E}$, that is, $C_E/\ker\psi_{ER/E}\cong
\Gal(ER/E)$. Now the restriction map is injective and we have
\begin{gather*}
\rest\circ\psi_{ER/E}=\psi_{R/F}\circ \N_{E/F}. 
\intertext{Therefore}
\vec x\in\ker\psi_{ER/E}\iff \psi_{ER/E}(\vec x)=1\iff \\
\iff \rest\circ 
\psi_{ER/E}(\vec x)
=1=\psi_{R/F}\circ \N_{E/F}(\vec x)\iff \\
\iff \N_{E/F}(\vec x)\in\ker\psi_{R/F}=H\iff
\vec x\in \N_{E/F}^{-1}(H).
\end{gather*}
\end{proof}

We apply Theorem \ref{T2.5} to the diagram
\[
\xymatrix{
K\ar@{-}[rr]\ar@{-}[d]&&KE_P\ar@{-}[d]\\ k\ar@{-}[rr]&&E_P}
\]
that is, $KE_P$ is the class field of $\N_{K/k}^{-1}(\Delta_P)$.
Since $E_P$ is maximum in the sense that $P$ is the only
finite prime ramified in $E_P/k$ and $KE_P/K$ unramified at
every finite prime, we have that $\Delta_P$ satisfies
\begin{gather*}
N_{K/k}^{-1}(\Delta_P)\subseteq \prod_{Q\in R_T^+}
\prod_{\pK|Q}U_{\pK}\times \prod_{\pL_{\infty}|\p}
\*{K_{\pL_{\infty}}}\subseteq J_K,
\intertext{or, equivalently,}
\Delta_P\subseteq \N_{K/k}\big(\prod_{Q\in R_T^+}
\prod_{\pK|Q}U_{\pK}\times\prod_{\pL_{
\infty}|\p}\*{K_{\pL_{\infty}}}\big).
\end{gather*}

Let $\vec \alpha\in \prod_{Q\in R_T^+}
\prod_{\pK|Q}U_{\pK}\times\prod_{\pL_{
\infty}|\p}\*{K_{\pL_{\infty}}}$, $\vec \alpha=(\alpha_{\pK})_{\pK}$.
Then 
\[
\N_{K/k}\vec \alpha=\prod_{Q\in R_T^+}\big(
\prod_{\pK|Q}\N_{K_{\pK}/k_Q}\alpha_{\pK}\big)\cdot
\prod_{\pL_{\infty}|\p}\N_{K_{\pL_{
\infty}/k_{\infty}}}\alpha_{\pL_{\infty}}.
\]

For $Q\neq P$, $Q$ is unramified in $L_P/k$, therefore,
for ${\eu Q}|Q$, $K_{\eu Q}/k_Q$ is unramified and in particular
it is a cyclic extension. Then $\N_{K_{\eu Q}/k_Q} U_{\eu Q}=
U_Q$ (see \cite[Teorema 17.2.17]{RzeVil2017}).

For $Q=P$, we have 
\[
\prod_{\pK|P}\N_{K_{\pK}/k_P}\alpha_{\pK}=\prod_{j=1}^{m_P}
\N_{K_{\pK_j/k_P}} \alpha_{\pK_j}
\]
where $\con_{k/K} P=\pK_1^{e_1}\cdots \pK_{m_P}^{e_{m_P}}$.

It follows that $\prod_{j=1}^{m_P}
\N_{K_{\pK_j/k_P}} \alpha_{\pK_j}\in H_P$. In other words, if
\[
S_j:=\N_{K_{\pK_j/k_P}} U_{\pK_j}\times \prod_{\substack{Q\in
R_T^+\\ Q\neq P}}U_Q \times [(\pi)\times U_{\infty}^{(1)}] \subseteq 
U_P\times  \prod_{\substack{Q\in
R_T^+\\ Q\neq P}}U_Q \times [(\pi)\times U_{\infty}^{(1)}],
\]
we have
\[
\Delta_P=\prod_{j=1}^{m_P}S_j\quad\text{and}\quad
H_P=\prod_{j=1}^{m_P} \N_{K_{\pK_j/k_P}} U_{\pK_j}.
\]

Now, if $S_j$ is the norm group of the field $R_j\subseteq {_n
\cicl P{c_j}_m}$ for some $n\in{\ma N}\cup\{0\}, m\in{\ma N}$
and $c_j\in{\ma N}$, then $\prod_{j=1}^{m_P} S_j$ is the norm
group of $\cap_{j=1}^{m_P} R_j$. 

It follows that $[C_k:k^*S_j]=[U_P:\N_{K_{\pK_j}/k_P} U_{\pK_j}]$
and $\Gal(R_j/k)\cong C_k/\*k S_j$. Therefore $[R_j:k]=[C_k:k^*S_j]=
[U_P:\N_{K_{\pK_j}/k_P} U_{\pK_j}]$. Finally, we have
\begin{gather*}
E_P=\bigcap_{j=1}^{m_P} R_j,\quad [E_P:k]=\Big[\bigcap_{j=1}^{m_P}
R_j:k\Big]=\Big[U_P:\prod_{j=1}^{m_P} 
\N_{K_{\pK_j/k_P}} U_{\pK_j}\Big].
\end{gather*}

We have proved our main result.

\begin{theorem}\label{T2.6}
Let $K/k$ be a finite and separable extension, where $k=\F(T)$.
With the notations as above, let $\ge K=KL$. Then $L=
\prod_{P\in R_T^+}L_P$ where $L_P=E_PS$, $S=L\cap M$
and $k\subseteq E_P\subseteq \cicl P{c_P}$ corresponds to
$\prod_{j=1}^{m_P} 
\N_{K_{\pK_j/k_P}} U_{\pK_j}$. In particular
\[
[E_P:k]=\Big[U_P:\prod_{j=1}^{m_P} 
\N_{K_{\pK_j/k_P}} U_{\pK_j}\Big],
\]
where $\con_{k/K} P=\pK_1^{e_1}\cdots \pK_{m_P}^{e_{m_P}}$.

The tamely ramified part of $L_P/K$ is given by
\begin{gather*}
e^{\rm{tame}}(P)=\gcd(e_1,\ldots,e_{m_P},q^{d_P}-1),
\end{gather*}
with $d_P=\deg_k P$.
$\fin$
\end{theorem}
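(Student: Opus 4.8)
The first assertions of the theorem have, in fact, already been assembled in the discussion preceding the statement, so the plan is merely to record them and then to prove the formula for the tamely ramified part. The decomposition $L=\prod_{P\in R_T^+}L_P$ with $L_P=E_PS$ is Theorem \ref{T2.4}(4) together with the relation $L_P=E_PS$ established there; the description of $E_P$ as the field $\bigcap_{j=1}^{m_P}R_j$ attached to $H_P=\prod_{j=1}^{m_P}\N_{K_{\pK_j}/k_P}U_{\pK_j}$, and the degree formula $[E_P:k]=[U_P:H_P]$, come from Corollary \ref{C3.3N} and the application of Theorem \ref{T2.5} to the diagram for $KE_P/K$. Thus only the tame formula remains to be proved.

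For the tame part I would first reduce to a purely local computation. Since $S\subseteq M$ and no finite prime of $k$ ramifies in $M/k$, the prime $P$ is unramified in $S/k$, so the tame ramification of $P$ in $L_P/k$ equals that in $E_P/k$, which is what is denoted $e^{\text{tame}}(P)$ in Remark \ref{R2.2}. By Theorem \ref{T3.1N} the reciprocity isomorphism identifies $\Gal(E_P/k)\cong U_P/H_P$, and because $P$ is totally ramified in $\cicl P{c_P}/k$ it is totally ramified in every intermediate field, so $e_P(E_P|k)=[U_P:H_P]$. Filtering by the pro-$p$ group $U_P^{(1)}$ and using $U_P/U_P^{(1)}\cong\*{{\ma F}_{q^{d_P}}}$ of order $q^{d_P}-1$, the prime-to-$p$ part of $[U_P:H_P]$ is exactly $[\*{{\ma F}_{q^{d_P}}}:\overline{H_P}]$, where $\overline{H_P}$ is the image of $H_P$ in $\*{{\ma F}_{q^{d_P}}}$. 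Hence $e^{\text{tame}}(P)=[\*{{\ma F}_{q^{d_P}}}:\overline{H_P}]$, and everything hinges on identifying $\overline{H_P}$.

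This identification is where I expect the main obstacle to lie. For each $j$ I would factor $K_{\pK_j}/k_P$ through its maximal unramified subextension $T_j$, of residue degree $f_j$, with $K_{\pK_j}/T_j$ totally ramified of degree $e_j$. Using a coefficient field (available in equal characteristic), one checks that on residue units $\N_{K_{\pK_j}/T_j}$ induces $\bar u\mapsto\bar u^{e_j}$ on $\*{{\ma F}_{q^{d_Pf_j}}}$, while $\N_{T_j/k_P}$ induces the surjective field norm $\N_{{\ma F}_{q^{d_Pf_j}}/{\ma F}_{q^{d_P}}}$; therefore the image of $\N_{K_{\pK_j}/k_P}U_{\pK_j}$ in $\*{{\ma F}_{q^{d_P}}}$ is $(\*{{\ma F}_{q^{d_P}}})^{e_j}$, the subgroup of index $\gcd(e_j,q^{d_P}-1)$. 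Taking the compositum over $j$ inside the cyclic group $\*{{\ma F}_{q^{d_P}}}$ and invoking the elementary identity $\gcd\big(\gcd(e_1,n),\ldots,\gcd(e_{m_P},n)\big)=\gcd(e_1,\ldots,e_{m_P},n)$ with $n=q^{d_P}-1$, I obtain that $\overline{H_P}$ has index $\gcd(e_1,\ldots,e_{m_P},q^{d_P}-1)$, which is the asserted value of $e^{\text{tame}}(P)$. As a consistency check this matches Remark \ref{R2.2} through Abhyankar's Lemma, since the maximal tame cyclotomic piece at $P$ absorbed over $K$ has degree $\gcd(q^{d_P}-1,e_P(K|k))$, and the relevant ramification is governed precisely by the exponents $e_j$ of $\con_{k/K}P$.
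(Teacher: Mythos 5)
Your proposal is correct. For the decomposition $L=\prod_{P}L_P$ with $L_P=E_PS$, the identification of $E_P$ as the class field of $H_P=\prod_{j=1}^{m_P}\N_{K_{\pK_j}/k_P}U_{\pK_j}$, and the degree formula, you do exactly what the paper does: point back to Theorem \ref{T2.4}, Corollary \ref{C3.3N} and the norm computation following Theorem \ref{T2.5} — indeed the paper offers no separate proof of Theorem \ref{T2.6} beyond that preceding discussion. Where you genuinely diverge is the tame formula. The paper obtains it globally via Abhyankar's Lemma: a tame cyclotomic field $F\subseteq\cicl P{c_P}$ of degree $d$ has $KF/K$ unramified above $P$ if and only if $d\mid e_j$ for every $j$, whence $e^{\text{tame}}(P)=\gcd(e_1,\ldots,e_{m_P},q^{d_P}-1)$; this is the argument of Remark \ref{R2.2}, carried out with the individual $e_j$ in the number-field analogue (proof of Theorem \ref{T3.7}). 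You instead extract the tame part locally from the formula $[E_P:k]=[U_P:H_P]$ itself: since $U_P^{(1)}$ is pro-$p$, the prime-to-$p$ part of that index equals the index of the image of $H_P$ in $U_P/U_P^{(1)}\cong\*{{\ma F}_{q^{d_P}}}$, and the standard behaviour of local norms on residue units ($e_j$-th power through the totally ramified layer, surjective field norm through the unramified layer) identifies that image as $(\*{{\ma F}_{q^{d_P}}})^{\gcd(e_1,\ldots,e_{m_P})}$, of index $\gcd(e_1,\ldots,e_{m_P},q^{d_P}-1)$. Both arguments are sound; yours has the merit of deriving the tame formula as a direct corollary of the id\`ele-theoretic description, so that the two assertions of the theorem are visibly consistent, while the paper's Abhyankar route is independent of class field theory and clarifies that the expression $\gcd(q^{d_P}-1,e_P(K|k))$ of Remark \ref{R2.2} must, for non-Galois $K/k$, be read as the gcd over all the exponents $e_j$ of $\con_{k/K}P$, exactly as you note.
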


\subsection{The field $S$}\label{S3.1}

To study $S$, recall that for a finite extension $K/k$, the genus
field is $\g K=KF$ where $F/k$ is the maximum abelian extension
contained in the Hilbert class field and the extended genus field is
$\ge K=KL$, where $L$ satisfies $\ge L=L$, $L/k$ is abelian
and $L$ is the maximum with respecto to this property. We have
$\g F=F$, $L=\ge F$ and $\ge L=L$. Let $L\subseteq {_n\cicl N{}_m}$
with $(m, N, n)$ the conductor of $L$. Then $M=L_nk_m$
and $S=L\cap M$.

\begin{proposition}\label{P3.1.1}
We have that $L/F$ is totally ramified at the infinite primes, unramified
at the finite primes and $[L:F]|q-1$. In particular, $L/F$ is tamely 
ramified.
\end{proposition}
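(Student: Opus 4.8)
The plan is to reduce everything to the abelian theory already developed, applied to the abelian extension $F/k$. Since $F/k$ is abelian, fix a conductor datum giving $F\subseteq{_n\cicl N{}_m}$, set $E:=FM\cap\cicl N{}$, and let $H$ be the decomposition group of the infinite primes in $FE/F$. By the results recalled at the start of this section (and \cite{BaMoReRzVi2018}) we have $\g F=F\g{E^H}$ and $\ge F=F\ge E$, hence $L=\ge F=F\ge E$. The three ``tame'' assertions are then immediate: $L/F$ is unramified at the finite primes because $\ge F/F$ always is; $L/F$ is tamely ramified at $\p$ because $\ge F/F$ always is; and since $\g F=F$ we get $[L:F]=[\ge F:\g F]\mid q-1$ from the general bound $[\ge K:\g K]\mid q-1$ (the bound $q-1$ together with $\gcd(q-1,p)=1$ reproves tameness). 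It remains to prove total ramification at $\p$.

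For the total ramification I would exploit the fact, established in this section, that the infinite primes of $\g{E^H}$ are fully ramified in $\ge E/\g{E^H}$, with $[\ge E:\g{E^H}]=e_{\infty}(\ge E|\g{E^H})\mid q-1$ and tame. The hypothesis $\g F=F$ forces $F\g{E^H}=F$, i.e. $\g{E^H}\subseteq F$. Put $F_0:=F\cap\ge E$, so that $\g{E^H}\subseteq F_0\subseteq\ge E$ and, by the Galois correspondence, restriction gives an isomorphism $\Gal(L/F)=\Gal(F\ge E/F)\xrightarrow{\sim}\Gal(\ge E/F_0)$. Since $\ge E/\g{E^H}$ is totally ramified at $\p$, so is every subextension lying over $\g{E^H}$; in particular $\ge E/F_0$ is totally ramified at $\p$, whence $e_{\infty}(\ge E|F_0)=[\ge E:F_0]=[L:F]$ and $f_{\infty}(\ge E|F_0)=1$. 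I would then transfer these local invariants across the restriction isomorphism: for a prime of $L$ over $\p$, the decomposition and inertia groups of $L/F$ map into those of $\ge E/F_0$, which coincide because $f_{\infty}(\ge E|F_0)=1$; hence $f_{\infty}(L|F)=1$ and $e_{\infty}(L|F)=[L:F]$, i.e. the infinite primes are totally ramified in $L/F$.

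The main obstacle is precisely this transfer step, that is, showing that forming the compositum $L=F\ge E$ over the base $F$ neither lowers the ramification index nor introduces a residue extension at $\p$; equivalently, that $F/F_0$ is unramified and residually trivial at $\p$. Conceptually this is clear: all of the tame ramification of $\p$ in the ambient field ${_n\cicl N{}_m}$ sits in the cyclotomic layer $\cicl N{}$ (the constant part $k_m$ is inert and the part $L_n$ contributes only wild ramification), so the tame inertia of $\p$ in $F$ is already captured by $F_0=F\cap\ge E$, leaving $F/F_0$ unramified at $\p$. To make this rigorous I would apply Abhyankar's lemma in the tame setting to obtain $e_{\infty}(L|F_0)=\lcm\bigl(e_{\infty}(F|F_0),e_{\infty}(\ge E|F_0)\bigr)$ and combine it with the tower relation $e_{\infty}(L|F_0)=e_{\infty}(L|F)\,e_{\infty}(F|F_0)$, reducing total ramification to the vanishing $e_{\infty}(F|F_0)=1$; the latter, together with the compatibility of decomposition groups under the restriction isomorphism, also yields $f_{\infty}(L|F)=1$. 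Once $F/F_0$ is seen to be unramified with trivial residue degree at $\p$, the proposition follows, and the inertia field $L^{I_{\infty}}=F$ is recognized as the maximal subextension of $L/F$ contained in the Hilbert class field of $F$, consistent with $\g F=F$.
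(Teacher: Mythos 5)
Your reduction of the first three assertions to the abelian theory of Section \ref{S3} is fine, and so is the setup for total ramification: $\g F=F\g{E^H}=F$ gives $\g{E^H}\subseteq F$, hence $\g{E^H}\subseteq F_0:=F\cap\ge E$; restriction identifies $\Gal(L/F)$ with $\Gal(\ge E/F_0)$; and $\ge E/F_0$ is totally ramified at $\p$ as an upper subextension of the totally ramified $\ge E/\g{E^H}$. The problem is the transfer step, which you yourself flag as the main obstacle and then do not carry out; worse, the statement you reduce it to, $e_{\infty}(F|F_0)=1$, is false in general. Since $F_0\subseteq \ge E\subseteq \cicl {N'}{}$ for some $N'$, the prime $\p$ is at most tamely ramified in $F_0/k$, so $e_{\infty}(F_0|k)\mid q-1$; but $F$ may contain a nontrivial piece of $L_n$, in which case $e_{\infty}(F|k)$ has a nontrivial wild part and $e_{\infty}(F|F_0)=e_{\infty}(F|k)/e_{\infty}(F_0|k)>1$. (You even note in the same sentence that $L_n$ contributes wild ramification at $\p$, which contradicts ``$F/F_0$ is unramified at $\p$''.) What your Abhyankar computation $e_{\infty}(L|F_0)=\lcm\bigl(e_{\infty}(F|F_0),e_{\infty}(\ge E|F_0)\bigr)$ together with the tower relation actually requires is only $\gcd\bigl(e_{\infty}(F|F_0),e_{\infty}(\ge E|F_0)\bigr)=1$, which would give $e_{\infty}(L|F)=e_{\infty}(\ge E|F_0)=[L:F]$. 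The wild part of $e_{\infty}(F|F_0)$ is harmless for that gcd, but the tame part is not: both it and $e_{\infty}(\ge E|F_0)$ divide $q-1$, and nothing in your argument excludes a common factor, i.e.\ tame inertia of $\p$ in $F$ that is not already visible in $F_0=F\cap\ge E$ (for instance inertia twisted into the constant-field direction $k_m$). That exclusion is essentially the whole content of the proposition, so the proof is not complete.

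The paper avoids $F_0$ altogether and moves upward rather than downward: it first deduces $\g E=E$ from $\g EM=\g FM=FM=EM$, then uses that $\ge E/\g E$ is totally and tamely ramified at $\p$ of degree dividing $q-1$, and passes to the compositum with $M$. Because $e_{\infty}(M|k)=q^n$ is coprime to $q-1$, adjoining $M$ preserves this tame index, so $e_{\infty}(\ge EM|\g EM)=[\ge E:\g E]$; since $\ge FM=\ge EM$ and $\g FM=\g EM$, descending gives $e_{\infty}(\ge F|\g F)=[\ge F:\g F]$. In other words, the only interference the paper must control is between the tame extension $L/F$ and the purely wild/constant layer $M$, where coprimality does all the work; your route instead must control interference between $L/F$ and the tame inertia of $\p$ in $F$ itself, a genuinely stronger statement that remains unproved in your write-up. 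You should either rework the total-ramification step along the paper's lines or supply an actual proof that the tame part of $e_{\infty}(F|F_0)$ is coprime to $e_{\infty}(\ge E|F_0)$.
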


\begin{proof}
We have that $F/k$ is abelian. Let $F\subseteq {_n\cicl N{}_m}$ and
$E=FM\cap \cicl N{}$. Then $\g EM=\g FM=FM=EM$
(see \cite{BaMoReRzVi2018}) and therefore $\g E=E$.

Since $e_{\infty}(\ge E|\g E)|q-1$ and $e_{\infty}(M|k)=q^n$, it follows
that $e_{\infty}(\ge EM/\g EM)=e_{\infty}(\ge E|\g E)=[\ge E:\g E]$.
\[
\xymatrix{
\ge E\ar@{-}[rrr]_{e_{\infty}=q^n}
\ar@{-}[d]_{e_{\infty}=d|q-1}&&&\ge EM=\ge FM\ar@{-}[d]^{e_{\infty}=d|q-1}\\
\g E\ar@{-}[rrr]_{e_{\infty}=q^n\phantom{xxxxxxx}}
\ar@{-}[d]&&&\g EM=\g FM=FM=EM\ar@{-}[d]\\
k\ar@{-}[rrr]_{e_{\infty}=q^n}&&&M
}
\]

Hence, $e_{\infty}(\ge F|\g F)=e_{\infty}(\ge F M|\g FM)=e_{\infty}(
\ge EM|\g EM)=[\ge E:\g E]=[\ge E F:\g EF]=[\ge F:\g F]$. So, the 
infinite primes are total and tamely ramified in $L=\ge F/\g F=F$.

On the other hand, $\ge E/\g E$ is unramified at the finite primes,
thus $\ge EF=\ge F=L/F=\g F=\g EF$ is unramified at the finite
primes.
\end{proof}

\begin{proposition}\label{P3.1.2}
We have
\[
\ew {L}k=\ew Fk=\ew Sk=e_{\infty}(S|k).
\]
Furthermore, $S=L\cap M=F\cap M$.
\end{proposition}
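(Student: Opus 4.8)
The plan is to reduce every assertion to a comparison of the $p$-parts of ramification indices at $\p$. For any abelian (indeed Galois) extension $X/k$, the wild inertia at an infinite prime is the $p$-Sylow subgroup of the inertia group, so $\ew Xk$ is exactly the $p$-part of $e_{\infty}(X|k)$; I will use this identification throughout. Thus the three wild equalities amount to showing that $e_{\infty}(L|k)$, $e_{\infty}(F|k)$ and $e_{\infty}(S|k)$ share the same $p$-part, and that $e_{\infty}(S|k)$ is already a $p$-power. I will also repeatedly use multiplicativity of ramification indices in a tower and the fact that restriction carries inertia into inertia (so, by Lagrange, the ramification index of a subextension divides that of the larger one).

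First I would record that $e_{\infty}(S|k)$ is a $p$-power: since $S=L\cap M\subseteq M$ and $e_{\infty}(M|k)=q^{n}$, we get $e_{\infty}(S|k)\mid q^{n}$, whence $e_{\infty}(S|k)=\ew Sk$. Next, in the tower $k\subseteq F\subseteq L$ I would write $e_{\infty}(L|k)=e_{\infty}(L|F)\,e_{\infty}(F|k)$ and invoke Proposition~\ref{P3.1.1}, which gives $e_{\infty}(L|F)\mid q-1$; as this factor is prime to $p$, the $p$-parts of $e_{\infty}(L|k)$ and $e_{\infty}(F|k)$ coincide, i.e. $\ew Lk=\ew Fk$. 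Finally, using the decomposition $L=ES$ and the tower $k\subseteq S\subseteq L$, I would write $e_{\infty}(L|k)=e_{\infty}(ES|S)\,e_{\infty}(S|k)$; since restriction to $E$ embeds $\Gal(ES|S)$ into $\Gal(E|k)$ carrying inertia into inertia, and $\p$ is tamely ramified in the cyclotomic field $E=\ge E$, the factor $e_{\infty}(ES|S)$ is prime to $p$. Hence the $p$-part of $e_{\infty}(L|k)$ equals $e_{\infty}(S|k)$, giving $\ew Lk=e_{\infty}(S|k)=\ew Sk$. Combining the three computations yields $\ew Lk=\ew Fk=\ew Sk=e_{\infty}(S|k)$.

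For the final assertion I would prove $S\subseteq F$, since then $S=L\cap M\subseteq F\cap M\subseteq L\cap M=S$ gives $S=F\cap M$. Consider the field $FS$, with $F\subseteq FS\subseteq L$. Restriction embeds $I_{\infty}(FS|F)$ into $I_{\infty}(S|k)$, so $e_{\infty}(FS|F)$ is a $p$-power; on the other hand $e_{\infty}(FS|F)\mid e_{\infty}(L|F)\mid q-1$ by Proposition~\ref{P3.1.1}, so it is prime to $p$. Therefore $e_{\infty}(FS|F)=1$, that is, $FS/F$ is unramified at $\p$. But Proposition~\ref{P3.1.1} also asserts that $L/F$ is \emph{totally} ramified at the infinite primes, so the inertia group of the subextension $FS/F$ is the image of $I_{\infty}(L|F)=\Gal(L|F)$, namely all of $\Gal(FS|F)$; thus $FS/F$ is simultaneously totally ramified and unramified at $\p$, forcing $\Gal(FS|F)=1$ and $FS=F$. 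Hence $S\subseteq F$.

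The routine ingredients are tower multiplicativity and the inertia-into-inertia property, which I would invoke without comment. The one genuinely delicate point is ruling out a hidden everywhere-unramified (constant) subextension when transporting $S\subseteq M$ back into $F$: a priori $FS/F$ could be a nontrivial extension of constants, which my ramification bounds alone would not exclude. The decisive observation that closes the argument is that $L/F$ is totally ramified at $\p$ by Proposition~\ref{P3.1.1}, leaving no room for any unramified subextension inside $L/F$; this is precisely why $FS=F$.
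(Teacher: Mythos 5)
Your proposal is correct and follows essentially the same route as the paper: both arguments reduce the wild equalities to $e_{\infty}(L|F)\mid q-1$ (Proposition \ref{P3.1.1}), $\ew{ES}{S}\mid \ew{E}{k}=1$ for the tame cyclotomic factor $E=\ge E$, and $e_{\infty}(S|k)\mid q^{n}$; and both derive $S=F\cap M$ by playing the coprimality of $q^{n}$ and $q-1$ against the total ramification of $L/F$ at the infinite primes applied to $FS=F(L\cap M)$. The only difference is cosmetic: you conclude $FS=F$ from ``totally ramified and unramified,'' while the paper equivalently shows $[S:F\cap M]$ divides both $q^{n}$ and $q-1$.
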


\begin{proof}
We have $\ew{L}k=\ew LK=\ew LF\ew Fk=\ew Fk$.

By the definition of $S$, we have $\ew Sk=e_{\infty}(S|k)$ since
$e_{\infty}(S|k)|q^n$. Now, $L=\ge ES$, $\ge E\cap S=k$ and
$\ew {\ge E}k=1$. Therefore 
\[
\ew Lk=\ew {\ge ES}S\ew Sk=\ew Sk
\]
since $\ew {\ge ES}S|\ew {\ge E}k=1$.

We have $F\cap M\subseteq L\cap M=S$ and $F\cap(L\cap M)=F\cap M$.
\[
\xymatrix{
S=L\cap M\ar@{-}[r]\ar@{-}[d]&F(L\cap M)\ar@{-}[d]\ar@{-}[r]&L\\
F\cap M\ar@{-}[r]&F\ar@{-}[ru]
}
\]
It follows that $[L\cap M:F\cap M]=[F(L\cap M):F]|[L:F]|q-1$. We have
that $L/F$
is totally ramified at the infinite primes and therefore $F(L\cap M)/F$ is
also fully ramified at the infinite primes. It follows that $S=L\cap M/F\cap M$
is fully ramified at the infinite primes (see \cite[Corolario 10.4.15]{RzeVil2017}).
Thus, $[S:F\cap M]|q^n$ and $[S:F\cap M]|q-1$ so that 
$[S:F\cap M]=1$ and $F\cap M=L\cap M$.
\end{proof}

\begin{proposition}\label{P3.1.3}
The field of constants of $S$, of $L$ and of $F$ is the same.
\end{proposition}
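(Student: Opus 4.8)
The plan is to exploit the chain of inclusions $S\subseteq F\subseteq L$ to reduce the equality of the three fields of constants to a single reverse inclusion, and then to place the whole field of constants of $L$ inside $S$.

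First I would record the relevant inclusions. By Proposition~\ref{P3.1.2} we have $S=F\cap M$, so $S\subseteq F$; and since $L=\ge F$ is by construction an extension of $F$, also $F\subseteq L$. Hence $S\subseteq F\subseteq L$. Taking in each field the algebraic closure of $\F$, that is, its field of constants, and using that this operation is monotone under inclusion of fields, I obtain that the field of constants of $S$ is contained in that of $F$, which in turn is contained in that of $L$. Therefore it suffices to prove the single reverse inclusion: the field of constants of $L$ is contained in that of $S$.

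Next I would determine the field of constants of $M$. Since $\p$ is totally and wildly ramified in $L_n/k$, and a nontrivial extension of constants is unramified everywhere, no such extension can lie in $L_n$; thus $L_n$ has field of constants $\F$. The Carlitz cyclotomic field $\cicl N{}$ is geometric over $k$, so it too has field of constants $\F$. Consequently the field of constants of ${_n\cicl N{}_m}=L_n\cicl N{}k_m$ is ${\ma F}_{q^m}$, and in particular $M=L_nk_m$ has field of constants ${\ma F}_{q^m}$.

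Finally, let ${\ma F}_{q^d}$ be the field of constants of $L$. Since $L\subseteq{_n\cicl N{}_m}$, we get ${\ma F}_{q^d}\subseteq{\ma F}_{q^m}$, that is $d\mid m$, so the constant extension $k_d:={\ma F}_{q^d}(T)$ satisfies $k_d\subseteq k_m\subseteq M$. As $k_d\subseteq L$ as well, I conclude $k_d\subseteq L\cap M=S$, whence ${\ma F}_{q^d}$ lies in the field of constants of $S$. This gives the reverse inclusion sought in the first step, and all three fields of constants coincide. The only genuine input is the computation of the field of constants of $M$ as ${\ma F}_{q^m}$, equivalently the geometricity of $L_n$ and of $\cicl N{}$; I expect this to be the one point requiring care, while the remaining steps are formal manipulations of the inclusions $S\subseteq F\subseteq L$.
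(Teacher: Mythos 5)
Your proof is correct and follows essentially the same route as the paper's: the paper's own argument is just the terse observation that the field of constants ${\ma F}_{q^{t_0}}$ of $L$ lies in $S=L\cap M$ together with the chain $S\subseteq F\subseteq L$. You merely supply the justification the paper leaves implicit, namely that the constants of $L$ land in $k_m\subseteq M$ because ${_n\cicl N{}_m}$ has field of constants ${\ma F}_{q^m}$.
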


\begin{proof}
If ${\ma F}_{q^{t_0}}$ is the field of constants of $L$ then ${\ma F}_{q^{t_0}}
\subseteq S=L\cap M$ and since $S\subseteq F\subseteq L$, the
result follows.
\end{proof}

\begin{proposition}\label{P3.1.4}
Let $\con_{k/K}\p={\mc P}_1^{e_1}\cdots {\mc P}_r^{e_r}$ and let $t_i=\deg_K(
{\mc P}_i)$. Then the field of constants of $\g K$ is ${\ma F}_{q^{t_0}}$ where
$t_0=\gcd(t_1,\ldots, t_r)$.
\end{proposition}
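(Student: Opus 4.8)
The plan is to pin down the constant field of $\g K$ as the largest constant extension of $k$ that survives inside the Hilbert class field $K_H$ after composition with $K$. Only the residue degrees $t_i$ of the infinite primes will matter: the ramification indices $e_i$ play no role, since constant extensions are everywhere unramified. Write ${\ma F}_{q^s}$ for the exact field of constants of $K$. Because the residue field of each $\mc P_i$ equals ${\ma F}_{q^{t_i}}$ and contains ${\ma F}_{q^s}$, we have $s\mid t_i$ for all $i$, hence $s\mid t_0$.

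First I would show ${\ma F}_{q^{t_0}}\subseteq \g K$. Consider $k_{t_0}={\ma F}_{q^{t_0}}(T)$, which is abelian over $k$, so $Kk_{t_0}=K{\ma F}_{q^{t_0}}$ is an abelian constant extension of $K$, automatically unramified at every finite prime. In such an extension every prime has ramification index $1$, and the inertia degree of $\mc P_i$ is $t_0/\gcd(t_i,t_0)$; since $t_0=\gcd(t_1,\ldots,t_r)$ divides each $t_i$, this inertia degree is $1$, so every infinite prime $\mc P_i$ is fully decomposed in $Kk_{t_0}/K$. Hence $Kk_{t_0}\subseteq K_H$, and by the maximality of $F$ among abelian extensions of $k$ contained in $K_H$ we obtain $k_{t_0}\subseteq F$, whence ${\ma F}_{q^{t_0}}\subseteq F\subseteq \g K$.

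Next I would prove the opposite bound on the constants. Let ${\ma F}_{q^c}$ be the exact field of constants of $\g K$; then $K{\ma F}_{q^c}\subseteq \g K\subseteq K_H$. Since $K_H$ demands that every infinite prime of $K$ be fully decomposed, each $\mc P_i$ splits completely in the constant extension $K{\ma F}_{q^c}/K$, and reading off its inertia degree as $c/\gcd(t_i,c)=1$ forces $c\mid t_i$ for all $i$, hence $c\mid t_0$. Together with the inclusion ${\ma F}_{q^{t_0}}\subseteq \g K$ from the previous step, which gives $t_0\mid c$, this yields $c=t_0$, so the field of constants of $\g K$ is ${\ma F}_{q^{t_0}}$.

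The argument rests entirely on two standard facts about constant field extensions --- that they are everywhere unramified, and that a prime of residue degree $t_i$ acquires inertia degree $t_0/\gcd(t_i,t_0)$ upon adjoining ${\ma F}_{q^{t_0}}$ --- together with the correct use of Rosen's defining property of $K_H$: full decomposition, not mere unramifiedness, of the infinite primes. I expect the only delicate point to be this last one, since it is precisely the splitting condition at $\p$ that bounds the constants from above and pins $c$ to $\gcd(t_1,\ldots,t_r)$, whereas an unramifiedness condition alone would leave the constant field unbounded. No estimate is required, so the work is bookkeeping with the constant-extension decomposition law rather than a substantial obstacle.
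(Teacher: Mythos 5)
Your proposal is correct and complete. The paper itself offers no argument for Proposition \ref{P3.1.4} --- it simply defers to \cite{MaRzVi2017} --- so there is nothing internal to compare against; your proof supplies exactly the standard argument one expects that reference to contain. The two halves are sound: the lower bound uses that $Kk_{t_0}/K$ is an abelian constant extension, unramified everywhere, in which each ${\mc P}_i$ has inertia degree $t_0/\gcd(t_i,t_0)=1$, so $Kk_{t_0}\subseteq K_H$ and hence $k_{t_0}\subseteq F$ by maximality of $F$; the upper bound correctly exploits Rosen's requirement that the infinite primes be \emph{fully decomposed} in $K_H/K$ (not merely unramified), which forces the exact constant field ${\ma F}_{q^c}$ of $\g K$ to satisfy $c\mid t_i$ for every $i$. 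You also rightly flag that this splitting condition is the whole content --- without it the constant field would be unbounded. The only point worth making explicit is the convention on $t_i$: your computation reads $\deg_K({\mc P}_i)$ as the absolute degree, i.e.\ the residue field of ${\mc P}_i$ is ${\ma F}_{q^{t_i}}$ as an extension of ${\ma F}_q$; this is consistent with how the paper uses $t_0$ later (e.g.\ $f_{\infty}(S|k)=t_0$ in Proposition \ref{P3.1.5}), but if $\deg_K$ were taken relative to the exact constant field ${\ma F}_{q^s}$ of $K$ the answer would instead be ${\ma F}_{q^{st_0}}$, so the reading should be stated.
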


\begin{proof}
See \cite{MaRzVi2017}.
\end{proof}

\begin{corollary}\label{C3.1.4'}
The field of constants of $S, L$ and $F$ is ${\ma F}_{q^{t_0}}$.
$\fin$
\end{corollary}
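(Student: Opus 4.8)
The plan is to reduce the statement to a single field, namely $F$, and then match its field of constants against the value already computed for $\g K$ in Proposition \ref{P3.1.4}. By Proposition \ref{P3.1.3} the fields $S$, $L$ and $F$ share a common field of constants, so it suffices to prove that the field of constants of $F$ is exactly ${\ma F}_{q^{t_0}}$, with $t_0=\gcd(t_1,\ldots,t_r)$ as in Proposition \ref{P3.1.4}.

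For one inclusion I would use that $F\subseteq \g K=KF\subseteq K_H$. By Proposition \ref{P3.1.4} the field of constants of $\g K$ is ${\ma F}_{q^{t_0}}$, and since any element of $F$ that is algebraic over ${\ma F}_q$ is in particular an element of $\g K$ algebraic over ${\ma F}_q$, the field of constants of $F$ is contained in the field of constants of $\g K$, which is ${\ma F}_{q^{t_0}}$.

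For the reverse inclusion I would exhibit ${\ma F}_{q^{t_0}}$ inside $F$ by a maximality argument. Since ${\ma F}_{q^{t_0}}$ is the field of constants of $\g K$, we have ${\ma F}_{q^{t_0}}\subseteq \g K\subseteq K_H$, whence the constant extension $k_{t_0}={\ma F}_{q^{t_0}}(T)={\ma F}_{q^{t_0}}k$ satisfies $k_{t_0}\subseteq K_H$. This extension $k_{t_0}/k$ is cyclic, its Galois group being $\Gal({\ma F}_{q^{t_0}}/{\ma F}_q)$, hence abelian. Since $F$ is by definition the maximum abelian extension of $k$ contained in $K_H$, I conclude that $k_{t_0}\subseteq F$, so that ${\ma F}_{q^{t_0}}$ lies in the field of constants of $F$. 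Combining the two inclusions shows that the field of constants of $F$ equals ${\ma F}_{q^{t_0}}$, and Proposition \ref{P3.1.3} transports this equality to $S$ and $L$.

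Once Propositions \ref{P3.1.3} and \ref{P3.1.4} are in hand the computation is essentially routine; the only point requiring a little care is the reverse inclusion, where one must remember that the defining property of $F$ as the maximum abelian subextension of $K_H/k$ forces every abelian subextension of $K_H/k$, and in particular the constant extension $k_{t_0}/k$, to sit inside $F$.
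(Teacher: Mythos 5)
Your proof is correct and follows the route the paper intends: the corollary is stated as an immediate consequence of Propositions \ref{P3.1.3} and \ref{P3.1.4}, and you supply precisely the missing link, namely that the field of constants of $F$ equals that of $\g K=KF$, with the reverse inclusion handled correctly via the maximality of $F$ among abelian subextensions of $K_H/k$ applied to the constant extension $k_{t_0}/k$.
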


Now we consider a finite abelian extension $J/k$ such that $KJ/K$ is
unramified and the infinite primes decompose fully. Let $\pL|\p$ be a
prime divisor of $KJ$, $\pL\cap K={\mc P}_i$ for some $1\leq i\leq r$ and
$\pL\cap J={\eu Q}$. Taking the completions we have
\[
\xymatrix{
K_{{\mc P}_i}\ar@{=}[r]^{=1}\ar@{-}[d]&(KJ)_{\pL}\ar@{-}[d]\\
k_{\infty}\ar@{-}[r]_{H_i}&J_{\mc Q}
}
\]
Let $H_i:=\N_{J_{\mc Q}/k_{\infty}}J_{\mc Q}^*$, that is,
$H_i$ is the norm group of $J_{\mc Q}$. Therefore, the norm
group corresponding to $(KJ)_{\pL}=K_{\P_i}$ is
$\N_{K_{\P_i}/k_{\infty}}^{-1}(H_i)$ (see Theorem \ref{T2.5}). Hence
$\N_{K_{\P_i}/k_{\infty}}^{-1}(H_i)
=K_{\P_i}^*$. That is, $H_i=\N_{K_{\P_i}/k_{\infty}}(K_{\P_i}^*)$.
The maximum global abelian extension $J/k$ satisfying that $KJ/K$
is unramified and the infinite primes decompose fully, satisfies,
locally at $\infty$, that its norm gorup is
\[
\prod_{i=1}^rH_i=\prod_{i=1}^r \N_{K_{\P_i}/k_{\infty}}(K_{\P_i}^*).
\]

In this way, if $R/k_{\infty}$ is the maximum abelian extension
with $(KR)_{\pL}=K_{\P_i}$ for some $i$. Thus $R$ corresponds
to $\prod_{i=1}^rH_i$, that is, $\Gal(R/k_{\infty})=k_{\infty}^*/
(\prod_{i=1}^r H_i)$ and $[R:k_{\infty}]=[k_{\infty}^*:\prod_{i=1}^r H_i]$.

Let $[R:k_{\infty}]=p^{\alpha}a$ with $\alpha\in{\ma N}\cup\{0\}$
and $p\nmid a$. Since $S$ is the
maximum abelian extension of $k$ such that the only ramified
prime is $\p$, it is fully ramified and $S\subseteq L$, and since
$f_{\infty}(L|S)=1$, it follows that if $\P_{\infty}$ is the only prime
in $S$ dividing $\p$ (recall that the number of primes
in $S$ that lie above $\p$ is $h_{\infty}(S|k)=1$), then
$[S_{\P_{\infty}}:k_{\infty}]=[S:k]=p^{\alpha}$. In particular, the
norm group corresponding to $S_{\infty}=S_{\P_{\infty}}$ in $k_{\infty}$ is the 
group ${\mc S}\supseteq \prod_{i=1}^r H_i$,
which is the minimum such that
$[k_{\infty}^*:{\mc S}]=p^{\alpha}$ is a $p$--group.

The conductor $\p^{n_0}$ of $S_{\infty}$ is such that $n_0$ is the
minimum nonnegative integer such that $U_{\infty}^{(n_0)}
\subseteq {\mc S}$. The conductor of constants $m_0$ of $S$,
that is, $m_0$ is the minimum natural number such that
$S\subseteq k_{m_0}L_{n_0}$ is given as follows 
(see \cite{BaMoReRzVi2018}). Let $t=f_{\infty}(S|k)$, $d^*=
f_{\infty}(R'S|S)$ where $R'=S_{m_0}\cap L_{n_0}$ and
$d^*=e_{\infty}(S|F')$ where $F'=S\cap {_{n_0}\cicl 1{}}=S\cap L_{n_0}$.
Therefore 
\[
m_0=f_{\infty}(S|k)e_{\infty}(S|S\cap L_{n_0}).
\]

\begin{proposition}\label{P3.1.5}
Let $f_{\infty}(S|k)=t$. Then ${\ma F}_{q^t}$ is the field of constants
of $S$. That is, $t=t_0$.
\end{proposition}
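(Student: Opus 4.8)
The plan is to reduce the statement to a purely local total‑ramification assertion. By Corollary \ref{C3.1.4'} the field of constants of $S$ is already known to be ${\ma F}_{q^{t_0}}$, so proving $t=t_0$ amounts to showing that the residue field of the unique infinite prime $\P_{\infty}$ of $S$ (recall $h_{\infty}(S|k)=1$) coincides with this field of constants. First I would locate the constant part of $S$. Since $S\subseteq M=L_nk_m$ and $M/L_n$ is an extension of constants, the field of constants of $M$ is ${\ma F}_{q^m}$; hence $t_0\mid m$ and $k_{t_0}:={\ma F}_{q^{t_0}}(T)\subseteq S\cap k_m$. Conversely $S\cap k_m$ is a constant subextension of $S$, so its constant field lies in ${\ma F}_{q^{t_0}}$, giving $S\cap k_m=k_{t_0}$. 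As residue degree is multiplicative in the tower $k\subseteq k_{t_0}\subseteq S$, and as $k_{t_0}/k$ is the constant extension ${\ma F}_{q^{t_0}}/{\ma F}_q$ in which the degree‑one prime $\p$ acquires residue degree $t_0$, we obtain $t=f_{\infty}(S|k)=f_{\infty}(S|k_{t_0})\,t_0$. Everything therefore reduces to proving that $S/k_{t_0}$ is totally ramified at the infinite prime, i.e. $f_{\infty}(S|k_{t_0})=1$.

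To establish this I would separate the two mechanisms acting on $\p$ inside $M=L_nk_m$: the wild ramification supplied by $L_n$ and the residue growth supplied by the constant extension $k_m$. Since $M/L_n$ is an extension of constants it is unramified everywhere, so $e_{\infty}(M|k)=e_{\infty}(L_n|k)=q^n$; because $k_m/k$ is unramified at $\p$ and $[M:k_m]=[L_n:L_n\cap k_m]=[L_n:k]=q^n$, this forces $e_{\infty}(M|k_m)=q^n=[M:k_m]$, so $M/k_m$ is totally ramified at $\p$. Consequently the intermediate extension $Sk_m/k_m$ is totally ramified there as well, i.e. $e_{\infty}(Sk_m|k_m)=[Sk_m:k_m]$. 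Moreover $S/k_{t_0}$ is geometric while $k_m/k_{t_0}$ is a constant extension, so the two are linearly disjoint over $k_{t_0}$ and $[Sk_m:k_m]=[S:k_{t_0}]$. Finally $Sk_m=S\,{\ma F}_{q^m}$ is itself a constant extension of $S$, hence unramified everywhere, so $e_{\infty}(Sk_m|S)=1$.

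Combining these in the two towers $k_{t_0}\subseteq k_m\subseteq Sk_m$ and $k_{t_0}\subseteq S\subseteq Sk_m$ (using $e_{\infty}(k_m|k_{t_0})=1$) yields
\[
[S:k_{t_0}]=[Sk_m:k_m]=e_{\infty}(Sk_m|k_m)=e_{\infty}(Sk_m|k_{t_0})=e_{\infty}(S|k_{t_0})\,e_{\infty}(Sk_m|S)=e_{\infty}(S|k_{t_0}).
\]
Thus $S/k_{t_0}$ is totally ramified at $\p$, so $f_{\infty}(S|k_{t_0})=1$ and therefore $t=f_{\infty}(S|k)=t_0$, as claimed. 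The main obstacle is exactly the identity $f_{\infty}(S|k_{t_0})=1$: one must see that inside $M$ the residue field of $\p$ grows only through the constant part $k_m$ and not at all through the totally (wildly) ramified part $L_n$. The device that makes this precise is the comparison of $S/k_{t_0}$ with the constant‑disjoint extension $Sk_m/k_m$, together with the observation that $Sk_m/S$ is an unramified extension of constants, which is what isolates the ramification from the residue growth.
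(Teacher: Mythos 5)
Your proof is correct and follows essentially the same route as the paper's: both arguments reduce the statement to showing that $S/k_{t_0}$ is totally ramified at the infinite prime, and both deduce $e_{\infty}(S|k_{t_0})=[S:k_{t_0}]$ from the total ramification of $\p$ in $k_mL_n/k_m$ combined with the fact that the constant extension $Sk_m/S$ is unramified and $[Sk_m:k_m]=[S:S\cap k_m]=[S:k_{t_0}]$. The only cosmetic difference is that the paper computes $e_{\infty}(k_{m_0}L_{n_0}|S)$ from above whereas you compute $e_{\infty}(Sk_m|k_m)$ from below; the ingredients are identical.
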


\begin{proof}
We have ${\ma F}_{q^{t_0}}(T)=k_{t_0}\subseteq S$. Let $m_0, n_0$
be minimum such that $S\subseteq k_{m_0}L_{n_0}$. Then $S\cap k_{
m_0}=k_{t_0}$.
\[
\xymatrix{
k_{m_0}\ar@{-}[d]\ar@{-}[rrr]&&&k_{m_0}S\ar@{-}[ldd]|!{[llld];[drr]}\hole
\ar@{-}[rr]&&k_{m_0}L_{n_0}\ar@{-}[d]\\
k_{m'}\ar@{-}[rrrrr]\ar@{-}[dd]&&&&&k_{m'}L_{n_0}\ar@{-}[dd]\\
&&S\ar@{-}[dll]\ar@{-}[rrru]\\
k_{t_0}\ar@{-}[rrrrr]&&&&&k_{t_0}L_{n_0}
}
\]
We have $Sk_{t_0}L_{n_0}=SL_{n_0}$, $k_{t_0}L_{n_0}\subseteq
SL_{n_0}\subseteq k_{m_0}L_{n_0}$. Let $SL_{n_0}\cap
k_{m_0}=k_{m'}$. From the Galois correspondence we obtain that
$k_{m'}k_{t_0}L_{n_0}=k_{m'}L_{n_0}=SL_{n_0}\supseteq S$. 
It follows that $m'\geq m_0$. Hence $m'=m_0$ and $SL_{n_0}
=k_{m_0}L_{n_0}$.

Now $e_{\infty}(k_{m_0}L_{n_0}|k_{t_0})=q^n$ and $k_{m_0}
\subseteq k_{m_0}S\subseteq k_{m_0}L_{n_0}$. Then
\begin{gather*}
e_{\infty}(k_{m_0}L_{n_0}|S)=e_{\infty}(k_{m_0}L_{n_0}|k_{m_0}S)=
\frac{q^{n_0}}{[k_{m_0}S:k_{m_0}]}=\frac{q^{n_0}}{[S:S\cap k_{m_0}]}
=\frac{q^{n_0}}{[S:k_{t_0}]}.
\intertext{Thus}
e_{\infty}(S|k_{t_0})=\frac{e_{\infty}(k_{m_0}L_{n_0}|k_{t_0})}{e_{
\infty}(k_{m_0}L_{n_0}|S)}=\frac{q^{n_0}}{q^{n_0}/[S:k_{t_0}]}=
[S:k_{t_0}].
\end{gather*}
It follows that $S/k_{t_0}$ is fully ramified at the infinite prime.
In particular $f_{\infty}(S|k_{t_0})=1$ so that $f_{\infty}(S|k)=
f_{\infty}(S|k_{t_0})f_{\infty}(k_{t_0}|k)=f_{\infty}(k_{t_0}|k)=t_0$.
\end{proof}

We collect the above discussion in the following theorem.

\begin{theorem}\label{T3.1.6}
Let $S=L\cap M$. Let  $\con_{k/K}\p=\P_1^{e_1}\cdots
\P_r^{e_r}$ and let $t_i=\deg_K(\P_i)$, $1\leq i\leq r$. Then the 
field of constants of $S$ is ${\ma F}_{q^{t_0}}$.

 Let $n_0$ be the
minimum nonnegative integer with $U_{\infty}^{(n_0)}\subseteq
{\mc S}$ where ${\mc S}\supseteq \prod_{i=1}^rH_i=\prod_{i=1}^r
\N_{K_{\P_i}/k_{\infty}}(K_{\P_i}^*)$ and ${\mc S}$ is the minimum
such that $[k_{\infty}^*:{\mc S}]=p^{\alpha}$ is a $p$--group. Then
the conductor of constants of $S$ is $m_0=f_{\infty}(S|k) e_{\infty}(
S|S\cap L_{n_0})=t_0e_{\infty}(S|S\cap L_{n_0})$ and ${\mc S}$
is the local norm group corresponding to $S$. In particular
${\ma F}_{q^{t_0}}\subseteq S\subseteq k_{m_0}L_{n_0}$.
$\fin$
\end{theorem}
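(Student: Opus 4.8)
The plan is to assemble the statement from the propositions and the local computation already carried out, since this theorem is explicitly a summary of the preceding discussion; the hard work has been done, and what remains is to fit the pieces together and to point out the one compatibility that genuinely needs checking.

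First I would record the constant field. By Proposition \ref{P3.1.4} the field of constants of $\g K$ is ${\ma F}_{q^{t_0}}$ with $t_0=\gcd(t_1,\ldots,t_r)$, and by Corollary \ref{C3.1.4'} the field of constants of $S$, $L$ and $F$ is the same ${\ma F}_{q^{t_0}}$; Proposition \ref{P3.1.5} refines this to $f_{\infty}(S|k)=t_0$. This immediately yields the first assertion and the lower inclusion ${\ma F}_{q^{t_0}}\subseteq S$. Next I would invoke the local description at $\p$. Recall that $S$ is the maximum abelian extension of $k$ in which $\p$ is the only ramified prime; it is totally and wildly ramified there, so $f_{\infty}(S|k)=1$ over ${\ma F}_{q^{t_0}}$ and $[S_{\P_\infty}:k_\infty]=p^{\alpha}$. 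The computation preceding Proposition \ref{P3.1.5} shows that the local norm group ${\mc S}\subseteq \*{k_\infty}$ attached to $S_{\infty}=S_{\P_\infty}$ is the minimal subgroup containing $\prod_{i=1}^r H_i=\prod_{i=1}^r\N_{K_{\P_i}/k_\infty}(K_{\P_i}^*)$ whose index $[\*{k_\infty}:{\mc S}]=p^{\alpha}$ is a $p$-group. By local class field theory the conductor $\p^{n_0}$ of $S_{\infty}/k_\infty$ is characterized by minimality of $n_0$ with $U_{\infty}^{(n_0)}\subseteq{\mc S}$, which is exactly the stated description of $n_0$ and of ${\mc S}$.

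Finally I would assemble the conductor of constants. The discussion before Proposition \ref{P3.1.5}, written in terms of $t=f_{\infty}(S|k)$ and $F'=S\cap L_{n_0}$, gives $m_0=f_{\infty}(S|k)\,e_{\infty}(S|S\cap L_{n_0})$, and substituting $f_{\infty}(S|k)=t_0$ from Proposition \ref{P3.1.5} produces $m_0=t_0\,e_{\infty}(S|S\cap L_{n_0})$. The inclusion $S\subseteq k_{m_0}L_{n_0}$ is then immediate from the very definitions of $n_0$ and $m_0$ as the conductor at $\p$ and the conductor of constants, so that ${\ma F}_{q^{t_0}}\subseteq S\subseteq k_{m_0}L_{n_0}$, completing the statement.

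The only point needing real care — and hence what I would treat as the main obstacle — is verifying that ${\mc S}$ is genuinely the local norm group of $S_{\infty}/k_\infty$, i.e. that the passage from the global maximality condition (the largest abelian $J/k$ with $KJ/K$ unramified and $\p$ fully split) to the local condition at $\p$ is compatible with replacing $\prod_i H_i$ by its minimal enlargement of $p$-power index. This is where Theorem \ref{T2.5} and the decomposition $\*{k_\infty}\cong\langle\pi\rangle\times\*{\F}\times U_{\infty}^{(1)}$ do the work: the tame factor $\langle\pi\rangle\times\*{\F}$ carries no wild ramification, so quotienting by it isolates the pro-$p$ part $U_{\infty}^{(1)}$, and the minimal $p$-power-index subgroup containing $\prod_i H_i$ cuts out precisely the totally wildly ramified extension $S_{\infty}$ with $\Gal(S_{\infty}/k_\infty)\cong\*{k_\infty}/{\mc S}$ of order $p^{\alpha}$. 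Once this identification is in hand, every remaining clause of the theorem is a restatement of the propositions cited above.
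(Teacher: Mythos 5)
Your proposal is correct and follows essentially the same route as the paper: Theorem \ref{T3.1.6} is explicitly introduced as a collection of the preceding discussion, and you assemble it from exactly the same ingredients the paper relies on (Proposition \ref{P3.1.4}, Corollary \ref{C3.1.4'}, Proposition \ref{P3.1.5}, and the local norm-group computation preceding Proposition \ref{P3.1.5}). Your added remark singling out the identification of ${\mc S}$ as the local norm group of $S_{\P_\infty}/k_\infty$ as the one step requiring genuine verification is a fair and accurate reading of where the paper's informal discussion carries the real weight.
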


\section{Number fields}\label{S4}

The results of Section \ref{S3} can be developed in the number
field case. In fact, for a number field, the extended genus field
is more transparent than in the function field case.

\begin{definition}\label{D3.1}
Let $K$ be an arbitrary number field, that is, a finite extension
of the rational field ${\ma Q}$. Let $K_{H^+}$ be the extended
or narrow Hilbert class field of $K$, that is, $K_{H^+}$ is
the maximum abelian extension of $K$ unramified at every
finite prime of $K$. We define the {\em extended genus
field $\ge K$} of $K$ as the maximum extension of $K$
contained in $K_{H^+}$ such that it is of the form $KL$
with $L/{\ma Q}$ abelian.

Equivalently, if $L$ is the maximum abelian extension of ${\ma Q}$
contained in $K_{H^+}$, the extended genus field of $K$ is
$\ge K=KL$.
\end{definition}

Again, we stress that we choose $L$ maximum.

As in the function field case, we have

\begin{proposition}\label{P4.1N}
Let $K/{\ma Q}$ be a finite abelian extension and let $X$ be 
the group of Dirichlet characters corresponding to $K$. Then
$Y:=\prod_{p \text{\ prime}}X_p$ is the group of Dirichlet
characters corresponding to $\ge K$. 
$\fin$
\end{proposition}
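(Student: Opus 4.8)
The plan is to characterize the extended genus field $\ge K$ of a finite abelian extension $K/{\ma Q}$ by translating the defining property---maximal among $KL$ with $L/{\ma Q}$ abelian and $\ge K/K$ unramified at the finite primes---into the language of Dirichlet characters, exactly as was done for the cyclotomic part in the function field case via the decomposition $E=\prod_P E_P$. By the Kronecker--Weber theorem, every abelian extension of ${\ma Q}$ sits inside some cyclotomic field ${\ma Q}(\zeta_n)$, so I would fix the group of Dirichlet characters $X$ corresponding to $K$ and work throughout with the ambient group $\widehat{({\ma Z}/n{\ma Z})^*}\cong\bigoplus_{p\mid n}\widehat{({\ma Z}/p^{a_p}{\ma Z})^*}$. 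The field corresponding to $Y=\prod_p X_p$ is, by construction, the compositum $\prod_p K_p$ of the prime-power components, and the whole content is to show that this $Y$-field is precisely $\ge K$.

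First I would verify that the $Y$-field contains $K$ and that $K\subseteq\ge K$ follows since $X\subseteq Y$ (each $\chi\in X$ equals $\prod_p\chi_p$, so $X$ is a subgroup of the product of its local components $X_p$). Next I would establish the crucial unramifiedness claim: the $Y$-field is unramified over $K$ at every finite prime. The key local fact is that for a Dirichlet character $\chi$ and a rational prime $p$, the $p$-component $\chi_p$ governs the ramification of $p$ in the field cut out by $\chi$; passing from $X$ to $Y=\prod_p X_p$ deletes the inter-prime coupling that forces ramification in $K/\ge K$ but leaves each local ramification datum at $p$ unchanged. Concretely, for each prime $p$ the $p$-part of the conductor of the $Y$-field equals the $p$-part of the conductor of $K$, so $p$ ramifies in the $Y$-field exactly as it does in $K$; hence the $Y$-field over $K$ picks up no new ramification at any finite prime. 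This mirrors the identity $e^{\text{tame}}(P)=\gcd(q^{d_P}-1,e_P(K|k))$ and the projection $E=\prod_P E_P$ used in Theorem~\ref{T2.6}.

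Finally I would prove maximality. Suppose $L/{\ma Q}$ is abelian with $KL/K$ unramified at all finite primes, and let $Z$ be the character group of $KL$, so $Z\supseteq X$. For each prime $p$, unramifiedness of $KL/K$ at $p$ forces the $p$-conductors of $KL$ and of $K$ to agree, which bounds $Z_p$ inside $X_p$; since $Z\subseteq\prod_p Z_p\subseteq\prod_p X_p=Y$, the field $L$ is contained in the $Y$-field, giving maximality. The argument is the character-theoretic analogue of the id\`ele-group computation culminating in Theorem~\ref{T3.1N} and Corollary~\ref{C3.3N}, but substantially simpler because there is no infinite prime to track and no constant-field extension to separate off.

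The step I expect to be the main obstacle is the precise local-global ramification bookkeeping at $p=2$: as the excerpt itself flags, for $n\geq 3$ the group $({\ma Z}/2^n{\ma Z})^*$ is not cyclic, so the $2$-component $X_2$ can decompose nontrivially and the conductor-matching argument must be done with care to ensure that taking the full product $\prod_p X_p$ does not inadvertently enlarge the $2$-part of the conductor and thereby ramify $2$ in the $Y$-field over $K$. Everywhere else the cyclicity of $({\ma Z}/p^{a}{\ma Z})^*$ for odd $p$ makes the component analysis routine; the $2$-adic case is where the real content lies, and it is exactly the point the paper promises to address with a separate criterion for the $2$-component of $\ge K$.
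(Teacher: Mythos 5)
Your overall architecture is the standard Leopoldt argument, which is exactly what the paper relies on here: it states the proposition without proof, treating it as the number-field analogue of the function-field fact recalled in Section \ref{S2} (that $Y=\prod_P X_P$ cuts out the maximal extension unramified at the finite primes inside a cyclotomic field). So you are supplying more detail than the paper does, and the skeleton --- $X\subseteq Y$ gives $K\subseteq$ the $Y$-field; the $Y$-field is unramified over $K$ at finite primes; any admissible $L$ has character group contained in $Y$ --- is the right one.

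However, the local invariant you invoke at both key steps is the wrong one, and as written the maximality step does not close. You argue via conductors: ``the $p$-part of the conductor of the $Y$-field equals that of $K$, so $p$ ramifies the same way,'' and later ``unramifiedness of $KL/K$ at $p$ forces the $p$-conductors to agree, which bounds $Z_p$ inside $X_p$.'' Equality of $p$-conductors neither determines the ramification index nor bounds one component group inside another: all nontrivial subfields of ${\ma Q}(\zeta_p)$ have conductor $p$ but pairwise different ramification indices at $p$. The fact you need is the standard one that for a field $F$ with character group $Z$, the ramification index of $p$ in $F/{\ma Q}$ equals $|Z_p|$. With it, both steps are immediate: since $Y_p=X_p$, the $Y$-field has $e_p=|X_p|=e_p(K/{\ma Q})$, so it is unramified over $K$ at $p$; and if $KL/K$ is unramified at $p$ with character group $Z\supseteq X$, then $|Z_p|=e_p(KL/{\ma Q})=e_p(K/{\ma Q})=|X_p|$ together with $X_p\subseteq Z_p$ forces $Z_p=X_p$, whence $Z\subseteq\prod_p Z_p=Y$. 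Note also that your anticipated obstacle at $p=2$ is a red herring for this proposition: the identity $e_p=|Z_p|$ holds for $p=2$ as well, cyclic or not; the non-cyclicity of $({\ma Z}/2^n{\ma Z})^*$ only matters later, when one tries to pin down $L_2$ by its degree alone (Theorem \ref{T3.8}), not when proving that $Y$ cuts out $\ge K$.
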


In particular, if $K/{\ma Q}$ is any finite extension and $\ge
K=KL$, then $L=\ge L$.
We want to describe $\ge K$ for a general number field $K$.
Let $K/{\ma Q}$ be a finite extension.
Let $p$ be a prime in ${\ma Q}$ and let
\[
\con_{{\ma Q}/K} p ={\pK}_1^{e_1}\cdots {\pK}_r^{e_r},
\]
that is, $e_i=e_{K|{\ma Q}}({\pK}_i|p)$, $1\leq i\leq r$.
Let $K_{{\pK}_1},\ldots, K_{{\pK}_r}$ be the completions
of $K$ at the primes above $p$. Let $\ge K=KL$ with
$L/{\ma Q}$ the maximum abelian extension 
such that $K\subseteq \ge K\subseteq K_{H^+}$.
\[
\xymatrix{
K\ar@{-}[r]\ar@{-}[d]&\ge K=KL\ar@{-}[r]\ar@{-}[d]&K_{H^+}\\
{\ma Q}\ar@{-}[r]&L}
\]

Since $L=\ge L$, we let $L_p$ be the field corresponding to $X_p$.
We have that $L=\prod_{p \text{\ prime}}L_p$ and $L_p\cap
L_q={\ma Q}$ for any primes $p,q$ such that $p\neq q$.
We have that $L_p$ is the maximum abelian extension of ${\ma Q}$
with $p$ the only possible finite prime ramified and such that
$KL_p/K$ is unramified at every finite prime.

Let $p$ be a fixed prime and let $L_p\subseteq \cic p{m_p}$. Lor
any $n\in{\ma N}$, the id\`ele group corresponding to $\cic n{}$ is 
\[
{\mc X}_n=\prod_{i=1}^t U_{p_i}^{(\alpha_i)}\times \prod_{\substack{
q \text{\ prime}\\ q\notin \{p_1,\ldots,p_t\}}} U_q\times {\ma R}^+,
\]
where $n=\prod_{i=1}^t p_i^{\alpha_i}$. As in the case of function
fields, it follows that the id\`ele group corresponding to $L_p$ is
of the form
\[
\Delta_p=H_p\times \prod_{\substack{
q \text{\ prime}\\ q\neq p}} U_q\times {\ma R}^+,
\]
where $U_p^{(m_p)}\subseteq H_p\subseteq U_p$.

We have $e_{K_{\pK_i}|{\ma Q}_p}=e_{K|{\ma Q}}(\pK_i|p)
=e_i$. The extension $L_p/{\ma Q}$ is totally ramified at $p$ and even
we could mix up $L_p$ with the completion of $L$ at $p$.
We have, with both meanings of $L_p$, that
$[L_p:{\ma Q}]=[L_p:{\ma Q}_p]=e_p(L_p|{\ma Q})$.

By Theorem \ref{T2.5} we have that the norm group of
the abelian extension $KL_p/K$ is $\N^{-1}_{K/{\ma Q}}(\Delta_p)$.
Since $L_p$ is maximum, we want $\Delta_p$ to be such that
(see \cite[Corolario 17.6.47]{RzeVil2017})
\begin{gather*}
\N^{-1}_{K/{\ma Q}}(\Delta_p)\subseteq \prod_{\pK\text{\ finite}}
U_{\pK}\times \prod_{\pK\text{\ real}}\*{K_{\pK}}=\prod_{\pK\text{\ finite}}
U_{\pK}\times \prod_{\pK\text{\ real}}\*{\ma R}\subseteq J_K,
\intertext{or}
\Delta_p\subseteq \N_{K/{\ma Q}}\Big(\prod_{\pK\text{\ finite}}
U_{\pK}\times \prod_{\pK\text{\ real}}\*{\ma R}\Big).
\end{gather*}

Let $\vec \alpha\in \prod_{\pK\text{\ finite}} U_{\pK}\times
\prod_{\pK\text{\ real}}\*{\ma R}\subseteq J_K$, $\vec \alpha=
(\alpha_{\pK})_{\pK}$. Then
\[
\N_{K/{\ma Q}}\vec \alpha=\prod_{q\text{\ finite}}\Big(\prod_{\pK|q}
\N_{K_{\pK}/{\ma Q}_p}\alpha_{\pK}\Big)\Big(\prod_{\pK\text{\ real}}
\N_{{\ma R}/{\ma R}}\alpha_{\pK}\Big).
\]

As in the case of function fields we obtain that
\[
H_p=\prod_{\pK|p} \N_{K_{\pK}/{\ma Q}_p}U_{\pK}\quad
\text{and}\quad \Delta_p=\prod_{\pK|p} \N_{K_{\pK}/{\ma Q}_p}U_{\pK}
\times \prod_{\substack{q\text{\ prime}\\q\neq p}}U_q\times
{\ma R}^+.
\]

In other words, let
\begin{gather*}
S_i=\N_{K_{\pK_i}/{\ma Q}_p} U_{\pK_i}
\times \prod_{\substack{q\text{\ prime}\\q\neq p}}U_q\times
{\ma R}^+ \subseteq U_p\times 
\prod_{\substack{q\text{\ prime}\\q\neq p}}U_q\times
{\ma R}^+.
\intertext{We have} 
\Delta_p=\prod_{i=1}^r S_i.
\end{gather*}

Now
$S_i$ corresponds to a field
$R_i\subseteq \cic p{n_p}$ and from 
\cite[Teorema 17.6.49]{RzeVil2017} it follows
that $\prod_{i=1}^r S_i$
corresponds to $\bigcap_{i=1}^r R_i$. Thus $L_p=\bigcap_{
i=1}^r R_i$. Furthermore, since $R_i$
corresponds to $S_i$, we have
\begin{gather*}
[C_{\ma Q}:\*{\ma Q}S_i]=[R_i:{\ma Q}]
\quad\text{and}\quad \Gal(R_i/{\ma Q})\cong C_{\ma Q}/\*{\ma Q}S_i.
\intertext{Since in each field $R_i/{\ma Q}$, $1\leq i\leq r$, the only finite
prime ramified is $p$ and it is totally ramified, the global and the local
degrees are equal so that $[R_i:{\ma Q}]=[(R_i)_{\pK_i}:{\ma Q}_p]$.
On the other hand, since $(R_i)_{\pK_i}/{\ma Q}_p$ is fully ramified
we have}
[(R_i)_{\pK_i}:{\ma Q}_p]= [U_p:\N_{K_{\pK_i}/{\ma Q}_p} 
U_{\pK_i}]
\intertext{(see \cite[Proposici\'on 17.2.15]{RzeVil2017}).
Thus}
[R_i:{\ma Q}]=[C_{\ma Q}:\*{\ma Q}S_i]=[U_p:\N_{K_{\pK_i}/{\ma Q}_p}
U_{\pK_i}],\\
[L_p:{\ma Q}]=\Big[\bigcap_{i=1}^r R_i:{\ma Q}\Big]=
\Big[U_p:\prod_{i=1}^r \N_{K_{\pK_i}/{\ma Q}_p}U_{\pK_i}\Big].
\end{gather*}

When $p\geq 3$, we have that $\cic pm$ is cyclic for every
$m\in{\ma N}$, however, when $p=2$, $\cic 2m$ is not cyclic
for $m\geq 3$. We study the two cases.

Let $G=\langle\sigma\rangle
\cong C_n$ be a finite cyclic group of order $n\in{\ma N}$ and 
let $H_i=\langle \sigma^{j_i}\rangle<G$ where $j_i|n$, $i=1,2$.
Let $H_1\cap H_2=\langle \sigma^t\rangle$ and $H_1H_2=
\langle \sigma^s\rangle$ with $s,t|n$.

We have $\sigma^t\in H_i$, $i=1,2$ so that there exist $a_i\in
{\ma Z}$ such that $\sigma^t=\sigma^{j_ia_i}$, $i=1,2$. Therefore
$t\equiv j_ia_i\bmod n$, $i=1,2$, that is, $t=j_ia_i+l_in$, $i=1,2$.
Hence $j_i|t$, $i=1,2$ so that $\lcm[j_1,j_2]|t$.

Let $u=\lcm[j_1,j_2]$, $j_i|u$. Set $u=j_ib_i$. Then $\sigma^u=
\sigma^{j_ib_i}\in H_i$, $i=1,2$. Thus $\sigma^u\in H_1\cap H_2=
\langle\sigma^t\rangle$ and $\sigma^u=\sigma^{tc}$ for some $c$
and $u=tc+ln$. It follows that $t|u=\lcm[j_1,j_2]$. Therefore $t=u$.

In other words, $H_1\cap H_2=\langle\sigma^{\lcm[j_1,j_2]}\rangle$.

Now, $H_1H_2=\langle\sigma^s\rangle$, $\frac{n}{s}=|H_1H_2|=
\frac{|H_1||H_2|}{|H_1\cap H_2|}=\frac{\frac{n}{j_1}\frac{n}{j_2}}
{\frac{n}{t}}=\frac{nt}{j_1j_2}$. Therefore $st=j_1j_2$ and
$j_1j_2=\gcd(j_1,j_2)\lcm[j_1,j_2]=\gcd(j_1,j_2)t=st$.
Hence $s=\gcd(j_1,j_2)$.

In short, we have

\begin{proposition}\label{P3.10}
Let $G=\langle\sigma\rangle$ be a cyclic group of order $n$ and let
$H_i=\langle\sigma^{j_i}\rangle$ with $j_i|n$, $i=1,2$ be two subgroups
of $G$. Then
$$
H_1\cap H_2=\langle\sigma^{\lcm[j_1,j_2]}\rangle, \quad
H_1H_2 =\langle \sigma^{\gcd(j_1,j_2)}\rangle. \eqno{\fin}
$$
\end{proposition}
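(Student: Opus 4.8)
For a finite cyclic group $G = \langle\sigma\rangle$ of order $n$ with two subgroups $H_i = \langle\sigma^{j_i}\rangle$ (where $j_i \mid n$, $i=1,2$), we must show
$$H_1 \cap H_2 = \langle \sigma^{\lcm[j_1,j_2]}\rangle, \qquad H_1 H_2 = \langle \sigma^{\gcd(j_1,j_2)}\rangle.$$

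**My plan.** The cleanest route is to work entirely with divisibility of exponents and the order formula for cyclic groups. First I would establish the intersection formula. The plan is to show that $H_1 \cap H_2 = \langle \sigma^t\rangle$ for some $t \mid n$ (every subgroup of a cyclic group is cyclic, generated by a power of $\sigma$ dividing $n$), and then to pin down $t$ from two inequalities of divisibility. For one direction, I would note that $\sigma^{\lcm[j_1,j_2]}$ is a power of $\sigma^{j_i}$ for each $i$ (since $j_i \mid \lcm[j_1,j_2]$), hence lies in both $H_i$, giving $\langle \sigma^{\lcm[j_1,j_2]}\rangle \subseteq H_1 \cap H_2 = \langle \sigma^t\rangle$ and therefore $t \mid \lcm[j_1,j_2]$. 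For the reverse, since $\sigma^t \in H_i$ I can write $\sigma^t = \sigma^{j_i a_i}$, so $t \equiv j_i a_i \pmod n$, i.e.\ $j_i \mid t$ in the relevant sense; as this holds for both $i$, I get $\lcm[j_1,j_2] \mid t$. Combining the two divisibilities yields $t = \lcm[j_1,j_2]$, which is exactly the computation already carried out in the text preceding the statement.

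**The product formula.** For $H_1 H_2 = \langle \sigma^s\rangle$ with $s \mid n$, I would avoid a direct generator chase and instead use the order-counting identity $|H_1 H_2| = |H_1|\,|H_2| / |H_1 \cap H_2|$, valid since $G$ is abelian so $H_1 H_2$ is a subgroup. Writing each order as $|H_i| = n/j_i$ and $|H_1 \cap H_2| = n/\lcm[j_1,j_2]$ from the first part, I compute $|H_1 H_2| = nt/(j_1 j_2)$ where $t = \lcm[j_1,j_2]$. Since $|H_1 H_2| = n/s$, this gives $st = j_1 j_2$. Now I invoke the classical identity $j_1 j_2 = \gcd(j_1,j_2)\,\lcm[j_1,j_2] = \gcd(j_1,j_2)\,t$, and cancel $t$ to conclude $s = \gcd(j_1,j_2)$. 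This is precisely the chain of equalities displayed just before the proposition.

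**Expected obstacle.** There is no genuine difficulty here; the argument is elementary finite group theory and the excerpt has in fact already written out both computations in full before stating the proposition, so the proof is essentially a matter of citing that discussion. The only point requiring a little care is the reverse divisibility $\lcm[j_1,j_2] \mid t$ in the intersection: one must correctly translate the congruence $t \equiv j_i a_i \pmod n$ into the divisibility $j_i \mid t$, which uses $j_i \mid n$ to absorb the $l_i n$ term. Given the preceding paragraphs, the formal proof amounts to a one-line appeal: the argument above establishes both claimed equalities.
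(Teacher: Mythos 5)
Your argument is correct and coincides with the paper's own proof, which is precisely the discussion preceding the proposition: the same two divisibility bounds pin down $t=\lcm[j_1,j_2]$ for the intersection, and the same order count $|H_1H_2|=|H_1||H_2|/|H_1\cap H_2|$ combined with $j_1j_2=\gcd(j_1,j_2)\lcm[j_1,j_2]$ gives $s=\gcd(j_1,j_2)$. Nothing further is needed.
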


\begin{corollary}\label{C3.11} With the conditions of Proposition
{\rm{\ref{P3.10}}}, we have
\begin{gather*}
|H_1\cap H_2|=\frac{|G|}{\lcm[j_1,j_2]},\quad
[G:H_1\cap H_2]=\lcm[j_1,j_2]=\lcm\Big[\frac{|G|}{|H_1|},
\frac{|G|}{|H_2|}\Big],\\
|H_1H_2|=\frac{|G|}{\gcd(j_1,j_2)},\\
 [G:H_1H_2]=\frac{|G|}
{|H_1H_2|}=\gcd(j_1,j_2)=\gcd\big([G:H_1],[G:H_2]\big).
\tag*{$\fin$}
\end{gather*}
\end{corollary}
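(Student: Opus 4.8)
The plan is to derive the corollary directly from Proposition \ref{P3.10} together with the elementary order count in a cyclic group. First I would record the basic fact that in $G=\langle\sigma\rangle$ of order $n$, for any divisor $d\mid n$ the element $\sigma^d$ has order $n/d$, so that $\langle\sigma^d\rangle$ has order $n/d$ and index $d$ in $G$. Applied to $H_i=\langle\sigma^{j_i}\rangle$ this gives $|H_i|=n/j_i$ and, equivalently, $[G:H_i]=j_i=|G|/|H_i|$. This last identity is the only bookkeeping needed to convert the exponents $j_i$ appearing in Proposition \ref{P3.10} into the group-theoretic quantities $[G:H_i]$ that occur in the statement.

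Next I would feed the two descriptions from Proposition \ref{P3.10} into this count. Since $H_1\cap H_2=\langle\sigma^{\lcm[j_1,j_2]}\rangle$ with $\lcm[j_1,j_2]\mid n$, its order is $n/\lcm[j_1,j_2]=|G|/\lcm[j_1,j_2]$, whence $[G:H_1\cap H_2]=\lcm[j_1,j_2]$ by Lagrange; rewriting each $j_i$ as $|G|/|H_i|$ yields the stated form $\lcm[|G|/|H_1|,|G|/|H_2|]$. Symmetrically, $H_1H_2=\langle\sigma^{\gcd(j_1,j_2)}\rangle$ has order $|G|/\gcd(j_1,j_2)$ and index $\gcd(j_1,j_2)=\gcd([G:H_1],[G:H_2])$, again after substituting $j_i=[G:H_i]$.

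I expect essentially no obstacle: all the content lies in Proposition \ref{P3.10}, and the corollary is a transcription through the order/index dictionary for cyclic groups. The only point requiring a moment's attention is that $\lcm[j_1,j_2]$ and $\gcd(j_1,j_2)$ are themselves divisors of $n$, so that the formula $|\langle\sigma^d\rangle|=n/d$ applies verbatim; this is immediate, since $\lcm[j_1,j_2]\mid n$ because both $j_1$ and $j_2$ divide $n$, while $\gcd(j_1,j_2)\mid j_1\mid n$.
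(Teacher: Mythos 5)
Your proposal is correct and follows essentially the route the paper intends: the corollary is stated with its proof omitted as immediate from Proposition \ref{P3.10}, and your argument supplies exactly the expected bookkeeping, namely the order formula $|\langle\sigma^d\rangle|=n/d$ for $d\mid n$ together with the substitution $j_i=[G:H_i]=|G|/|H_i|$. No gaps.
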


\begin{corollary}\label{C3.12}
If $p>2$ is a prime number and $H_i<\*{{\ma Z}_p}$, $i=1,2$ are
two subgroups of finite index, then $[\*{{\ma Z}_p}:H_1H_2]=
\gcd\big(\big[\*{{\ma Z}_p}:H_1\big],\big[\*{{\ma Z}_p}:H_2]\big)$.
\end{corollary}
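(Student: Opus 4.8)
The plan is to reduce the statement to the finite cyclic case already settled in Corollary \ref{C3.11}. The role of the hypothesis $p>2$ is that the unit group is then procyclic: one has the classical decomposition $\*{{\ma Z}_p}\cong \mu_{p-1}\times(1+p{\ma Z}_p)\cong {\ma Z}/(p-1)\times {\ma Z}_p$, with $\mu_{p-1}$ cyclic of order $p-1$ and $1+p{\ma Z}_p\cong{\ma Z}_p$. I would record this decomposition first, identifying $\*{{\ma Z}_p}$ with ${\ma Z}/(p-1)\times{\ma Z}_p$ additively in the computations that follow.

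The key step is to show that every finite quotient of $\*{{\ma Z}_p}$ is cyclic. If $H<\*{{\ma Z}_p}$ has finite index $n$, then $x^n\in H$ for every $x$, so the subgroup of $n$-th powers is contained in $H$ and $\*{{\ma Z}_p}/H$ is a quotient of $\*{{\ma Z}_p}\big/(\*{{\ma Z}_p})^n$. In the additive model this last group is $({\ma Z}/(p-1)\times{\ma Z}_p)/n({\ma Z}/(p-1)\times{\ma Z}_p)$; writing $n=p^am$ with $p\nmid m$ makes $m$ a unit of ${\ma Z}_p$, so $n{\ma Z}_p=p^a{\ma Z}_p$ and the group becomes ${\ma Z}/\gcd(n,p-1)\times {\ma Z}/p^a$. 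Since $\gcd(p-1,p)=1$, the two factors have coprime order and the product is cyclic by the Chinese remainder theorem; hence $\*{{\ma Z}_p}/H$, being a quotient of a cyclic group, is cyclic.

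With this in hand I would set $N:=H_1\cap H_2$, a subgroup of finite index, and let $G:=\*{{\ma Z}_p}/N$, a finite cyclic group by the previous step. Writing $\bar H_i:=H_i/N\leq G$, I apply Corollary \ref{C3.11} to $G$ and its two subgroups to get
\[
[G:\bar H_1\bar H_2]=\gcd([G:\bar H_1],[G:\bar H_2]).
\]
Because $N\subseteq H_i\subseteq H_1H_2$, the canonical correspondences give $[G:\bar H_i]=[\*{{\ma Z}_p}:H_i]$ for $i=1,2$ and $[G:\bar H_1\bar H_2]=[\*{{\ma Z}_p}:H_1H_2]$, and substituting these into the displayed identity yields the claim.

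I expect the cyclicity of $G$ to be the only real obstacle, and it is exactly where $p>2$ enters: for $p=2$ one has $\*{{\ma Z}_2}\cong {\ma Z}/2\times{\ma Z}_2$, whose finite quotients (for instance ${\ma Z}/2\times{\ma Z}/2$) need not be cyclic, so Corollary \ref{C3.11} no longer applies and the identity may fail. Once $G$ is known to be cyclic, the passage through Corollary \ref{C3.11} and the index bookkeeping are routine.
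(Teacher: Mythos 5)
Your proof is correct, but it reduces to Corollary \ref{C3.11} along a different path than the paper does. The paper also starts from $\*{{\ma Z}_p}\cong C_{p-1}\times{\ma Z}_p$, but then decomposes the subgroups themselves, writing $H_i=H_i'\times p^{n_i}{\ma Z}_p$ with $H_i'$ the torsion part, so that $H_1H_2=H_1'H_2'\times p^{\min\{n_1,n_2\}}{\ma Z}_p$; the index is then computed factor by factor, using the $\gcd$ formula of Corollary \ref{C3.11} only on the finite cyclic factor $C_{p-1}$ and the identity $\gcd(p^{n_1},p^{n_2})=p^{\min\{n_1,n_2\}}$ on the ${\ma Z}_p$ factor. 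You instead leave the subgroups alone and pass to the quotient: you show every finite quotient of $\*{{\ma Z}_p}$ is cyclic (via $(\*{{\ma Z}_p})^n\subseteq H$ and the coprimality of $\gcd(n,p-1)$ and $p^a$), then apply Corollary \ref{C3.11} wholesale in $G=\*{{\ma Z}_p}/(H_1\cap H_2)$. Your route has the advantage of sidestepping the claim, asserted without proof in the paper, that every finite-index subgroup of $C_{p-1}\times{\ma Z}_p$ splits as a product compatible with the decomposition (true here because the two factors have coprime pro-orders, but it does require justification); the cost is the small extra lemma that finite quotients are cyclic, which is exactly where $p>2$ enters in both arguments. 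The index bookkeeping through the correspondence theorem at the end is routine and correct.
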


\begin{proof}
We have that $\*{{\ma Z}_p}\cong C_{p-1}\times {\ma Z}_p$ where
$C_{p-1}$ is the cyclic group of order $p-1$. Let $H_i=H_i'\times
p^{n_i}{\ma Z}_p$ where $H_i'$ is the torsion of $H_i$, $i=1,2$.
Then $H_1H_2=H_1'H_2'\times p^{\min\{n_1,n_2\}}{\ma Z}_p$.
Therefore
\begin{align*}
[\*{{\ma Z}_p}:H_1H_2]&=[C_{p-1}:H_1'H_2']p^{\min\{n_1,n_2\}}\\
&=\gcd\big([C_{p-1}:H_1'],[C_{p-1}:H_2']\big)p^{\min\{n_1,n_2\}}\\
&=\gcd\big(\big[\*{{\ma Z}_p}:H_1\big],\big[\*{{\ma Z}_p}:H_2\big]\big).
\end{align*}
\end{proof}

We apply the previous discussion to the case $p>2$.

\begin{proposition}\label{P3.2}
If $p>2$, $\cic p{m_p}/{\ma Q}$ is a cyclic extension and $L_p/
{\ma Q}$ is a cyclic extension. For $F_1, F_2$ contained in
$\cic p{m_p}$, we have
\[
[F_1\cap F_2:{\ma Q}]=\gcd\big([F_1:{\ma Q}],[F_2:{\ma Q}]\big).
\]
\end{proposition}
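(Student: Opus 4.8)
The plan is to dispatch the three assertions in turn, reducing each to the structure of cyclic Galois groups together with the Galois correspondence. First I would settle cyclicity. Since $\Gal(\cic p{m_p}/{\ma Q})\cong({\ma Z}/p^{m_p}{\ma Z})^*$ and $p>2$ is odd, this unit group is cyclic of order $p^{m_p-1}(p-1)$ by the standard structure theorem for units modulo an odd prime power; this is the first claim. For the second claim, recall from the discussion preceding the statement that $L_p\subseteq\cic p{m_p}$. As $L_p/{\ma Q}$ is abelian, $\Gal(L_p/{\ma Q})$ is a quotient of the cyclic group $\Gal(\cic p{m_p}/{\ma Q})$, and every quotient of a cyclic group is cyclic, so $L_p/{\ma Q}$ is cyclic.

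For the degree formula I would pass to the Galois side. Set $G=\Gal(\cic p{m_p}/{\ma Q})$, which is cyclic by the first part, and let $H_i=\Gal(\cic p{m_p}/F_i)$, so that $F_i=(\cic p{m_p})^{H_i}$ and $[F_i:{\ma Q}]=[G:H_i]$ for $i=1,2$. By the Galois correspondence the intersection $F_1\cap F_2$ is the fixed field of the subgroup generated by $H_1$ and $H_2$, which in the abelian (indeed cyclic) group $G$ is precisely the product $H_1H_2$; hence $[F_1\cap F_2:{\ma Q}]=[G:H_1H_2]$. Applying Corollary \ref{C3.11} to the cyclic group $G$ with the subgroups $H_1,H_2$ gives $[G:H_1H_2]=\gcd([G:H_1],[G:H_2])$, and substituting the two degree identities yields $[F_1\cap F_2:{\ma Q}]=\gcd([F_1:{\ma Q}],[F_2:{\ma Q}])$, as required.

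There is no genuine obstacle here: the argument is a direct assembly of facts already available, namely the cyclicity of $({\ma Z}/p^{m_p}{\ma Z})^*$ and the computation of $[G:H_1H_2]$ recorded in Corollary \ref{C3.11} (which itself rests on Proposition \ref{P3.10}). The one point that demands care is the bookkeeping of the Galois correspondence, which is order-reversing: the intersection of fields corresponds to the \emph{compositum} $H_1H_2$ of the associated subgroups, not to their intersection. Getting this direction right is exactly what makes Corollary \ref{C3.11} apply and produce a $\gcd$; had I matched $F_1\cap F_2$ with $H_1\cap H_2$ instead, I would have landed on $\lcm$ and obtained the formula appropriate to the compositum $F_1F_2$ rather than the intersection.
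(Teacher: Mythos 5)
Your argument is correct, and for the degree formula it takes a genuinely different route from the paper's. You work entirely on the group side: identifying $F_1\cap F_2$ with the fixed field of the compositum $H_1H_2$ (correctly handling the order-reversing correspondence) and then invoking Corollary \ref{C3.11}, i.e.\ $[G:H_1H_2]=\gcd\big([G:H_1],[G:H_2]\big)$ for subgroups of a cyclic group, which the paper has just established via Proposition \ref{P3.10}. The paper instead stays on the field side: it sets $a=[F_1\cap F_2:{\ma Q}]$, $b=[F_1:{\ma Q}]$, $c=[F_2:{\ma Q}]$, observes $a\mid\gcd(b,c)$ trivially, and gets the reverse divisibility by noting that a cyclic extension contains a \emph{unique} subfield of each degree dividing $[\cic p{m_p}:{\ma Q}]$, so the field $F_0$ of degree $\gcd(b,c)$ lies in both $F_1$ and $F_2$ and hence in their intersection. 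The two proofs are essentially dual; yours has the merit of actually using the group-theoretic machinery the paper develops immediately beforehand (which the paper, somewhat curiously, does not apply here), while the paper's divisibility argument is self-contained and slightly more elementary. You also explicitly justify the two cyclicity claims (via the structure of $({\ma Z}/p^{m_p}{\ma Z})^*$ for odd $p$ and the fact that quotients of cyclic groups are cyclic), which the paper asserts without proof.
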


\begin{proof}
We consider $F_1F_2/{\ma Q}$ which is cyclic since $\cic p{m_p}
/{\ma Q}$ is a cyclic extension. We have
\[
\xymatrix{
&F_1\ar@{-}[d]\ar@{-}[d]\ar@{-}[r]&F_1F_2\ar@{-}[d]\\
&F_1\cap F_2\ar@{-}[r]\ar@{-}[dl]&F_2\\ {\ma Q}
}
\]
Let $a=[F_1\cap F_2:{\ma Q}]$, $b=[F_1:{\ma Q}]$ and
$c=[F_2:{\ma Q}]$. We have that $a|b$ and $a|c$ so that
$a|\gcd(b,c)$. Now, since $\gcd(b,c)|b$ and $\gcd(b,c)|c$,
there exists a unique field $F_0$ satisfying $[F_0:{\ma Q}]=
\gcd(b,c)$, $F_0\subseteq F_1$ and $F_0\subseteq F_2$.
Hence $F_0\subseteq F_1\cap F_2$. This implies $\gcd(b,c)
=[F_0:{\ma Q}]|[F_1\cap F_2:{\ma Q}]=a$. Thus 
$a=\gcd(b,c)$.
\end{proof}

\begin{corollary}\label{C3.3}
With the conditions of Proposition {\rm{\ref{P3.2}}},
if $p>2$ and $F_1,\ldots,F_t\subseteq \cic p{m_p}$, we have
\[
\Big[\bigcap_{i=1}^t F_i:{\ma Q}\Big]=\gcd_{1\leq i\leq t}
\big([F_{i}:{\ma Q}]\big).
\]
\end{corollary}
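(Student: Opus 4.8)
The plan is to prove the statement by induction on $t$, reducing everything to the two-field case already established in Proposition \ref{P3.2}. The base case $t=2$ is precisely Proposition \ref{P3.2}, and the case $t=1$ is trivial, so I would take $t\geq 3$ and assume the result holds for any collection of $t-1$ subfields of $\cic p{m_p}$.

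For the inductive step, I would write the $t$-fold intersection as
\[
\bigcap_{i=1}^t F_i=\Big(\bigcap_{i=1}^{t-1}F_i\Big)\cap F_t.
\]
First I would observe that $G:=\bigcap_{i=1}^{t-1}F_i$ is again a subfield of $\cic p{m_p}$: since $\cic p{m_p}/{\ma Q}$ is Galois (indeed cyclic, because $p>2$), any intersection of intermediate fields is itself an intermediate field. This is the only point that needs checking in order to invoke Proposition \ref{P3.2}, and it is immediate from Galois theory, so there is no real obstacle here. Applying Proposition \ref{P3.2} to the pair $G$ and $F_t$ then gives
\[
\Big[\bigcap_{i=1}^t F_i:{\ma Q}\Big]=[G\cap F_t:{\ma Q}]=\gcd\big([G:{\ma Q}],[F_t:{\ma Q}]\big).
\]

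Finally, I would apply the inductive hypothesis to $G=\bigcap_{i=1}^{t-1}F_i$, which yields $[G:{\ma Q}]=\gcd_{1\leq i\leq t-1}\big([F_i:{\ma Q}]\big)$, and conclude by associativity of the greatest common divisor:
\[
\gcd\Big(\gcd_{1\leq i\leq t-1}\big([F_i:{\ma Q}]\big),[F_t:{\ma Q}]\Big)=\gcd_{1\leq i\leq t}\big([F_i:{\ma Q}]\big).
\]
The argument is entirely formal once Proposition \ref{P3.2} is in hand; the cyclicity of $\cic p{m_p}$ for $p>2$ is exactly what keeps the two-field identity available at every stage of the induction, and the recursion simply propagates it. The same reasoning would fail for $p=2$ and $m_p\geq 3$, where $\cic 2{m_p}$ is not cyclic, which is why that case must be treated separately.
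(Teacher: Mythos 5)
Your proof is correct and is exactly the argument the paper intends: its entire proof of this corollary is the single line ``Use induction,'' and your write-up simply makes that induction explicit (base case from Proposition \ref{P3.2}, the observation that $\bigcap_{i=1}^{t-1}F_i$ is again a subfield of $\cic p{m_p}$, and associativity of the gcd). Nothing to add.
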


\begin{proof}
Use induction.
\end{proof}

\begin{remark}\label{R3.4}{\rm{
If $p=2$, Proposition \ref{P3.2} is not longer true. For instance, if
$F_1={\ma Q}(\sqrt{2})$, $F_2={\ma Q}(i)$, then 
$[F_1:{\ma Q}]=[F_2:{\ma Q}]=2$ and since $F_1\cap F_2=
{\ma Q}$, we have $[F_1\cap F_2:{\ma Q}]=1$.
}}
\end{remark}

\begin{remark}\label{R3.5}{\rm{
Since
\begin{gather*}
\Big[\bigcap_{i=1}^r R_{i}:{\ma Q}\Big]=[L_p:{\ma Q}]=
\Big[\*{{\ma Q}_p}:\prod_{i=1}^r\N_{(R_i)_{{\pK_i}}/{\ma Q}_p}
\*{(R_i)_{\pK_i}}\Big]=\Big[U_p:\prod_{i=1}^r
\N_{(R_i)_{{\pK_i}}/{\ma Q}_p} U_{\pK_i}\Big],
\intertext{for $p>2$, we have}
[L_p:{\ma Q}]=\gcd_{1\leq i\leq r}\big([R_i:{\ma Q}]\big)=
\gcd_{1\leq i\leq r}\big([U_{\pK_i}\colon \prod_{i=1}^r
\N_{(R_i)_{{\pK_i}}/{\ma Q}_p} U_{\pK_i}]\big).
\end{gather*}
}}
\end{remark}

The main result on number fields is the following.

\begin{theorem}\label{T3.7}
Let $K/{\ma Q}$ be a finite extension. With the above notations, we have
$\ge K=KL$ where $L=\prod_{p\text{\ finite}}L_p$ and $L_p$ is a
subfield of $\cic p{m_p}$ satisfying
\[
[L_p:{\ma Q}]=\Big[U_p:
\prod_{i=1}^r \N_{K_{\pK_i}/{\ma Q}_p}U_{\pK_i}\Big],
\]
where
$\con_{{\ma Q}/K}p=\pK_1^{e_1}\cdots \pK_r^{e_r}$.

Furthermore, if $p>2$, 
\begin{gather*}
[L_p:{\ma Q}]=
\gcd_{1\leq i\leq r}[U_p:\N_{K_{\pK_i}/{\ma Q}_p}U_{\pK_i}],
\intertext{$L_p$ is determined by its degree $[L_p:{\ma Q}]$ and $L_p$
is the class field of}
\prod_{i=1}^r\N_{K_{\pK_i}/{\ma Q}_p}
U_{\pK_i}\times \prod_{\substack{q\text{\ prime}\\q\neq p}}U_q\times
{\ma R}^+.
\end{gather*}

The tame ramification degree of the extension $[L_p:{\ma Q}]$ is
\[
e^{\rm{tame}}=\gcd(e_1,\ldots,e_r, p-1). 
\]
\end{theorem}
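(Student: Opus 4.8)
The plan is to prove Theorem \ref{T3.7} by assembling the pieces already established in the excerpt and then isolating the tame ramification degree. The first and main assertion, namely that $\ge K=KL$ with $L=\prod_{p\text{ finite}}L_p$ and that $L_p$ has the stated idèle-theoretic degree, has in fact been derived in the discussion preceding the statement. First I would recall that Definition \ref{D3.1} gives $\ge K=KL$ with $L=\ge L=\prod_p L_p$ and $L_p$ the field of the $p$-component $X_p$ of Dirichlet characters (Proposition \ref{P4.1N}), together with the disjointness $L_p\cap L_q={\ma Q}$ for $p\neq q$. Then the application of Theorem \ref{T2.5} to the tower $K\subseteq KL_p$ identifies the norm group of $KL_p/K$ as $\N_{K/{\ma Q}}^{-1}(\Delta_p)$, and the maximality of $L_p$ forces $\Delta_p=\prod_{\pK|p}\N_{K_{\pK}/{\ma Q}_p}U_{\pK}\times\prod_{q\neq p}U_q\times{\ma R}^+$. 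Writing $\Delta_p=\prod_{i=1}^r S_i$ and using that the intersection of class fields corresponds to the product of norm groups, one obtains $L_p=\bigcap_{i=1}^r R_i$ and the degree formula $[L_p:{\ma Q}]=[U_p:\prod_{i=1}^r\N_{K_{\pK_i}/{\ma Q}_p}U_{\pK_i}]$. So this part is essentially a matter of quoting the computation already carried out.

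For the case $p>2$, the plan is to invoke Corollary \ref{C3.3}: since $\cic p{m_p}/{\ma Q}$ is cyclic, each $R_i$ sits inside a cyclic extension, and the degree of an intersection of subfields of a cyclic extension is the gcd of the individual degrees. Combining this with the per-field degree $[R_i:{\ma Q}]=[U_p:\N_{K_{\pK_i}/{\ma Q}_p}U_{\pK_i}]$ gives the displayed gcd formula for $[L_p:{\ma Q}]$. The claim that $L_p$ is determined by its degree follows because inside a cyclic group there is a unique subgroup of each index, hence a unique subfield of $\cic p{m_p}$ of each degree; and the identification of $L_p$ as the class field of the explicit idèle group is just the statement $L_p=\bigcap R_i\leftrightarrow\prod S_i$ read off again.

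The genuinely new content, and where I expect the main work to lie, is the last formula $e^{\rm tame}=\gcd(e_1,\ldots,e_r,p-1)$. The approach is to separate the local norm index $[U_p:\N_{K_{\pK_i}/{\ma Q}_p}U_{\pK_i}]$ into its tame and wild parts. Writing $U_p\cong\*{\F}\times U_p^{(1)}$ with $\F={\ma F}_p$, and using local class field theory, the image $\N_{K_{\pK_i}/{\ma Q}_p}U_{\pK_i}$ has, in the tame quotient $U_p/U_p^{(1)}\cong C_{p-1}$, index exactly equal to the tame ramification index $e^{\rm tame}(\pK_i|p)=\gcd(e_i,p-1)$. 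This is the analogue, in the number field setting, of Remark \ref{R2.2}: the tame part of the local ramification index of a prime of residue characteristic $p$ over ${\ma Q}_p$ divides $p-1$ and equals $\gcd(e_i,p-1)$. The main obstacle is being careful about the interplay between the tame and wild parts of the local norm indices when passing to the product $\prod_i$ and then to the gcd over $i$: one must check that the tame part of $[L_p:{\ma Q}]$ is precisely the gcd of the tame parts $\gcd(e_i,p-1)$, which by Corollary \ref{C3.12} (the splitting $\*{{\ma Z}_p}\cong C_{p-1}\times{\ma Z}_p$ into torsion and pro-$p$ parts) behaves well under gcd, yielding $e^{\rm tame}=\gcd_i\gcd(e_i,p-1)=\gcd(e_1,\ldots,e_r,p-1)$.

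Finally I would note that for $p=2$ the degree formula and the class-field description hold verbatim, but Remark \ref{R3.4} shows the gcd simplification can fail; the tame formula $e^{\rm tame}=\gcd(e_1,\ldots,e_r,p-1)$ with $p-1=1$ degenerates to $e^{\rm tame}=1$, consistent with $\cic 2{m}/{\ma Q}$ being tamely unramified only trivially, so no separate argument is needed there for the tame degree.
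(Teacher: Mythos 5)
Your proposal is correct, and for the first two thirds of the statement it coincides with the paper: the paper's proof likewise opens with ``it remains to find $e^{\text{tame}}$'' and takes the degree formula, the gcd formula via Corollary \ref{C3.3}, and the class--field description as already established in the preceding discussion. Where you genuinely diverge is the tame ramification formula. The paper argues globally with the Abhyankar Lemma: for a subfield $F\subseteq\cic p{m_p}$ of degree $d$ prime to $p$ (hence tamely and totally ramified at $p$), a prime $\pL$ of $KF$ over $\pK_i$ has $e(\pL|p)=\lcm[e_i,d]$, so $KF/K$ is unramified at every finite prime exactly when $d\mid e_i$ for all $i$, i.e.\ $d\mid\gcd(e_1,\ldots,e_r,p-1)$; maximizing $d$ gives $b_p=e^{\text{tame}}$. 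You instead stay inside the id\`ele-theoretic framework and compute the index of $\prod_i\N_{K_{\pK_i}/{\ma Q}_p}U_{\pK_i}$ in the tame quotient $U_p/U_p^{(1)}\cong C_{p-1}$, using that products of subgroups of $C_{p-1}\times{\ma Z}_p$ split into torsion and pro-$p$ parts so that the tame index of the product is the gcd of the tame indices. Both routes are valid; yours has the merit of deriving the tame formula directly from the same norm groups that give $[L_p:{\ma Q}]$, while the paper's avoids any local norm computation. The one point you state but do not prove is the crux of your version: that the image of $\N_{K_{\pK_i}/{\ma Q}_p}U_{\pK_i}$ in $U_p/U_p^{(1)}\cong\*{{\ma F}_p}$ is exactly $(\*{{\ma F}_p})^{e_i}$, of index $\gcd(e_i,p-1)$. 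This is true (factor the norm through the maximal unramified subextension: the totally ramified part reduces to $u\mapsto\bar u^{e_i}$ on residue fields and the unramified part to the surjective finite-field norm), but it requires an argument --- it is precisely the step where the paper's Abhyankar computation does the corresponding work --- so you should supply it rather than cite it as an analogue of Remark \ref{R2.2}, which is itself proved by Abhyankar in the function field setting. Your closing remark that the formula degenerates correctly for $p=2$ is fine but moot, since the theorem's tame clause is only asserted (and the paper notes ``necessarily $p\geq 3$'') in the odd case.
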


\begin{proof}
It remains to find $e^{\rm{tame}}$. Note that necessarily, $p\geq 3$.
Let $L'_p$ be the subfield of 
$L_p$ of degree $b_p$ where $[L_p:{\ma Q}]=p^{a_p}b_p$ and
$\gcd(p,b_p)=1$.

For any $F\subseteq \cic p{m_p}$ with $[F:{\ma Q}]=d$ and 
$\gcd (p,d)=1$, $F/{\ma Q}$ is tamely ramified. Assume that
$KF/K$ is unramified at $p$.
\[
\xymatrix{
K\ar@{-}[r]\ar@{-}[d]&KF\ar@{-}[d]\\ {\ma Q}\ar@{-}[r]&F}
\]

By Abhyankar Lemma, if $\pL$ is a prime in $KF$ with
$\pL\cap {\ma Q}=(p)$, $\pL\cap K=\pK_i$, ${\mc Q}=\pL
\cap F$, then
\[
e(\pL|p)=\lcm[e(\pK_i|p),e({\mc Q}|p)]=\lcm[e_i,d].
\]

Therefore $e(\pL|\pK_i)=\frac{e(\pL|p)}{e(\pK_i|p)}=
\frac{e(\pL|p)}{e_i}$, that is, $\pL$ is unramified 
in $KF/K$ if and only if
$e(\pL|\pK_i)=1$ if and only if $e(\pL|p)=e_i$ if and only if
$d|e_i$. Hence, $KF/K$ is unramifed at every finite prime,
if and only if $d|e_i$ for $1\leq i\leq r$ if and only if $d|
\gcd(e_1,\ldots,e_r)$. Since $d|p-1$, this is equivalent to
$d|\gcd(e_1,\ldots,e_r,p-1)$. It follows that $b_p=
\gcd(e_1,\ldots,e_r,p-1)$.
\end{proof}

\begin{remark}\label{R5.5N}{\rm{
Theorem \ref{T3.7} was proved by M. Bhaskaran in \cite{Bh79}
and by X. Zhang in \cite{Xi86}.
}}
\end{remark}

\subsection{Remarks on $L_2$}\label{S4.1}

For any finite extension $K/{\ma Q}$, we have that if $\ge K=
KL$ with $L/{\ma Q}$ the maximum abelian extension contained
in $K_{H^+}$, we have proved that if $L=\prod_{q\text{\ prime}}
L_q$, then for $p\geq 3$, $L_p$ is completely determined by
\[
[L_p:{\ma Q}]=\Big[ U_p:\prod_{\pK|p}\N_{K_{\pK}/{\ma Q}_p}
U_{\pK}\Big]=\gcd_{\pK|p}[U_p:\N_{K_{\pK}/{\ma Q}_p}U_{\pK}].
\]
This is not so for $p=2$. We want to study $L_2$.

Let $[L_2:{\ma Q}]=2^a$, $a\geq 1$. For $a\geq 2$, there are
three possible $L_2$, namely
\[
\cic 2{a+1},\quad \cic 2{a+2}^+={\ma Q}(\zeta_{2^{a+2}}
+\zeta_{2^{a+2}}^{-1})\quad\text{and}\quad \cic 2{a+2}^-:=
{\ma Q}(\zeta_{2^{a+2}}-\zeta_{2^{a+2}}^{-1}),
\]
see \cite[\S 5.3.1]{RzeVil2017}.

If $L_2$ is real, then $L_2=\cic 2{a+2}^+$. If $L_2$ has conductor
$2^{a+1}$ then $L_2=\cic 2{a+1}$. In other words, $L_2$ can be
determined by means of its conductor and whether it is real or not.

If $K(\zeta_{2^{a+1}})/K$ is unramified, we have $L_2=
\cic 2{a+1}$. In any case $\cic 2{a+1}^+\subseteq L_2$, and
therefore $K\cic 2{a+1}^+/K$ is unramified.
\[
\xymatrix{
\cic 2{a+2}^+\ar@{-}[rr]^J\ar@{-}[dd]&&\cic 2{a+2}\ar@{-}[dd]\\
&\cic 2{a+2}^-\ar@{-}[ur]\ar@{-}[dl]\\
\cic 2{a+1}^+\ar@{-}[rr]\ar@{-}[dd]&&\cic 2{a+1}\ar@{-}[dd]&
{\begin{array}{c}
\Gal\big(\cic 2{a+2}/\cic 2{a+1}^+\big)\cong\\ \cong C_2\times C_2
\end{array}}\\
\\
{\ma Q}\ar@{-}[rr]&&\cic 4{}}
\]

We need to determine the group of id\`eles corresponding to
each extension: $L_2\in\big\{\cic 2{a+1},\cic 2{a+2}^+,\cic 2{a+2}^-\big\}$.

Recall that for a local field $K$ we have $\*K\cong \*\F\times U_{\pK}^{(1)}\times
(\pi)$, where $\pi$ is a uniformizer element, $v_{\pK}(\pi)=1$, $U_{\pK}$
are the units of $\*K$, $U_{\pK}^{(1)}$ are the units modulo $1$,
that is, $U_{\pK}^{(1)}=\{\xi\in U_{\pK}\mid \xi-1\in (\pi)\}=1+\pi {\mc O}_K
=1+\pK$, and $U_{\pK}^{(1)}\times \*\F=U_{\pK}$ where $\F$ is the
residue field.

In the particular case of $K=\*{{\ma Q}_p}$, $q=p$, $\*{{\ma F}_p}\cong
C_{p-1}={\ma Z}/(p-1){\ma Z}$ and $U_p=\Big\{\sum_{i=0}^{\infty}
a_ip^i\mid a_0\neq 0, a_i\in\{0,1,\ldots,p-1\}\text{\ for all\ }i\Big\}
\cong \*{{\ma Z}_p}$, where ${\ma Z}_p$ denotes the ring of $p$--adic
integers and $\*{{\ma Z}_p}$ is the multiplicative group of ${\ma Z}_p$.

We have

\begin{proposition}\label{P3.7}
\las
\item If $p>2$, $\*{{\ma Z}_p}\cong C_{p-1}\times {\ma Z}_p$ as groups.

\item If $p=2$, $1+2{\ma Z}_2\cong \{\pm 1\}\times (1+4{\ma Z}_2)$ and
$1+4{\ma Z}_2\cong {\ma Z}_2$. In particular,
$$
U_2=U_2^{(1)}=\*{{\ma Z}_2}\cong 1+2{\ma Z}\cong\{\pm 1\}\times
(1+4{\ma Z}_2)\cong \{\pm 1\}\times {\ma Z}_2. \eqno{\fin}
$$
\end{list}
\end{proposition}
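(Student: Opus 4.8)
The plan is to handle $p>2$ and $p=2$ separately, in each case splitting off the torsion subgroup of $\*{{\ma Z}_p}$ and then identifying the remaining pro-$p$ part with ${\ma Z}_p$ analytically.

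For part (a) I would begin with the reduction-mod-$p$ short exact sequence
\[
1\lra 1+p{\ma Z}_p\lra \*{{\ma Z}_p}\lra \*{{\ma F}_p}\lra 1,
\]
whose kernel is the group of principal units and whose quotient is cyclic of order $p-1$. Since $\gcd(p-1,p)=1$, Hensel's Lemma lifts each $(p-1)$-th root of unity of $\*{{\ma F}_p}$ uniquely to a $(p-1)$-th root of unity in $\*{{\ma Z}_p}$ (the Teichm\"uller representatives), and these form a cyclic group of order $p-1$ mapping isomorphically onto $\*{{\ma F}_p}$. This splits the sequence, giving $\*{{\ma Z}_p}\cong C_{p-1}\times(1+p{\ma Z}_p)$. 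I would then identify $1+p{\ma Z}_p$ with ${\ma Z}_p$ using the $p$-adic logarithm: for $p>2$ both $\log$ and $\exp$ converge appropriately and furnish a topological isomorphism $1+p{\ma Z}_p\stackrel{\sim}{\lra} p{\ma Z}_p\cong{\ma Z}_p$. Equivalently, $1+p$ is a topological generator, the point being that $(1+p)^{p^{n}}\in U_p^{(n+1)}\setminus U_p^{(n+2)}$ by the binomial expansion.

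For part (b), every $2$-adic unit is odd, so $U_2=U_2^{(1)}=\*{{\ma Z}_2}=1+2{\ma Z}_2$. To extract the torsion I would use $\{\pm 1\}$. Since $-1\equiv 3\pmod 4$ we have $-1\notin 1+4{\ma Z}_2$, hence $\{\pm 1\}\cap(1+4{\ma Z}_2)=\{1\}$; and any $u\in 1+2{\ma Z}_2$ lies in $1+4{\ma Z}_2$ or in $3+4{\ma Z}_2=-1+4{\ma Z}_2$, so that $u$ or $-u$ lands in $1+4{\ma Z}_2$. This yields $1+2{\ma Z}_2=\{\pm 1\}\times(1+4{\ma Z}_2)$. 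Finally $1+4{\ma Z}_2\cong{\ma Z}_2$: the $2$-adic logarithm restricts to an isomorphism $1+4{\ma Z}_2\stackrel{\sim}{\lra}4{\ma Z}_2\cong{\ma Z}_2$ (equivalently $5=1+4$ is a topological generator and $a\mapsto 5^a$ is an isomorphism).

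The one genuinely analytic input, and the reason $p=2$ must be treated apart, is the identification of the principal-unit group with ${\ma Z}_p$. The logarithm converges on all of $1+p{\ma Z}_p$, but for $p=2$ this group has nontrivial torsion ($-1$) and the exponential converges only on $4{\ma Z}_2$ (since $v_2(x)>1$ is required), so the clean isomorphism exists only after passing to $1+4{\ma Z}_2$; this is exactly what forces the extra $\{\pm 1\}$ factor. I expect this convergence-and-torsion bookkeeping to be the main technical point, the remainder being formal Hensel lifting and an elementary coset argument.
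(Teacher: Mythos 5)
Your proof is correct: the Teichm\"uller/Hensel lifting for the prime-to-$p$ part, the $p$-adic logarithm (equivalently, the topological generators $1+p$ and $5$) for the principal units, and the coset argument splitting off $\{\pm 1\}$ for $p=2$ constitute the standard argument, and your remark on why $\exp$ only converges on $4{\ma Z}_2$ correctly isolates the reason the case $p=2$ acquires the extra $\{\pm 1\}$ factor. The paper itself states this proposition without proof, treating it as a known structural fact, so there is no authorial argument to compare against; your write-up supplies exactly the classical justification one would cite.
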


We are going to identify complex conjugation $J$ with $-1$ since
$J(\zeta_{2^n})=\zeta_{2^n}^{-1}$ for all $n$.

The non--zero closed subgroups of $U_2=\*{{\ma Z}_2}
\cong\{\pm 1\}\times
{\ma Z}_2$ are: $\{\pm 1\}\times 2^n{\ma Z}_2$, $2^n{\ma Z}_2$
and $\{\pm 1\}\cdot 2^n{\ma Z}_2$ with $n\in{\ma N}\cup \{0\}$.

The quotient groups are respectively
\las
\item[$\bullet$] $\frac{\{\pm 1\}\times {\ma Z}_2}{\{\pm 1\}\times 2^n{\ma Z}_2}
\cong \frac{{\ma Z}_2}{2^n {\ma Z}_2}\cong C_{2^n}$,

\item[$\bullet$] $\frac{\{\pm 1\}\times {\ma Z}_2}{2^n{\ma Z}_2}\cong
\{\pm 1\}\times \frac{{\ma Z}_2}{2^n{\ma Z}_2}\cong \{\pm 1\}
\times C_{2^n}$,

\item[$\bullet$] $\frac{\{\pm 1\}\times {\ma Z}_2}{\{\pm 1\}\cdot 2^n{\ma Z}_2}
\cong {\mc H}$.
\end{list}

Let us study ${\mc H}$. Consider $b:=1\in{\ma Z}_2$.
Then $b$ is a topological
generator of ${\ma Z}_2$. Let $a:=-1$ be the unique torsion element
of $\*{{\ma Z}_2}$ of order $2$. Let ${\mc H}$ be the procyclic group
with topological generator $ab^{2^n}$: ${\mc H}=
\overline{\langle ab^{2^n}\rangle}$ (topological closure). Denote by
$\tilde{a}$ and $\tilde{b}$ the classes of $a$ and $b$ modulo 
${\mc H}$ respectively: $\tilde{a}=a\bmod {\mc H}$; $\tilde{b}=
b\bmod {\mc H}$.

We have ${\mc G}/{\mc H}=\langle\tilde{a},\tilde{b}\rangle$ where
${\mc G}=\{\pm 1\}\times {\ma Z}_2\cong\*{{\ma Z}_2}$. Since $ab^{2^n}\in
{\mc H}$, $\tilde{b}^{2^n}=\tilde{a}^{-1}\bmod{\mc H}$ and $\tilde{a}^{-1}
=\tilde{a}\bmod {\mc H}$ (indeed, $a^{-1}=a=-1$). Therefore 
${\mc G}/{\mc H}=\langle\tilde{b}\rangle$ since $\tilde{a}=\tilde{b}^{2^n}\in
\langle\tilde{b}\rangle$ so that ${\mc G}/{\mc H}$ is a cyclic group.

Note that $b^{2^n}\notin {\mc H}$ since otherwise $a\in {\mc H}$ but
$a$ is a torsion element and ${\mc H}$ is torsion free. Therefore
$b^{2^n}\notin{\mc H}$. On the other hand $b^{2^{n+1}}=b^{2^n}
b^{2^n}\equiv ab^{2^n}\bmod {\mc H}$ so that $b^{2^{n+1}}\in
{\mc H}$. It follows that $o(\tilde{b})=2^{n+1}$. Thus ${\mc G}/{\mc H}$
is a cyclic group of order $2^{n+1}$.

Uniformizing the indexes, we have
\las
\item[$\bullet$] $\frac{\{\pm 1\}\times {\ma Z}_2}{\{\pm 1\}\times 2^m{\ma Z}_2}
= \frac{\overline{\langle a, b\rangle}}{\overline{\langle a,b^{2^m}\rangle}}
=\langle b\bmod b^{2^m}\rangle \cong C_{2^m}$,

\item[$\bullet$] $\frac{\{\pm 1\}\times {\ma Z}_2}{2^{m-1}{\ma Z}_2}=
\frac{\overline{\langle a, b\rangle}}{\overline{\langle b^{2^{m-1}}\rangle}}
=\langle\tilde{a},\tilde{b}\rangle \bmod b^{2^{m-1}}\cong C_2\times
C_{2^{m-1}}$,

\item[$\bullet$] $\frac{\{\pm 1\}\times {\ma Z}_2}{\{\pm 1\}\cdot 2^m{\ma Z}_2}
=\frac{\overline{\langle a, b\rangle}}{\overline{\langle ab^{2^{m}}\rangle}}=
\langle\tilde{b}\rangle\bmod {\mc H}=C_{2^n}$.
\end{list}

Define ${\mc A}_m:=\{\pm 1\}\times 2^m{\ma Z}_2$; 
${\mc B}_m:=2^{m-1}{\ma Z}_2$; ${\mc C}_m:=\{\pm 1\} 
2^{m-1}{\ma Z}_2$.

We have 
\las
\item[$\bullet$] ${\mc R}_m:={\mc G}/{\mc A}_m\cong \Gal(\cic 2{m+2}^+/
{\ma Q})\cong C_{2^m}$ since $-1\in{\mc A}_n$,

\item[$\bullet$] ${\mc S}_m:={\mc G}/{\mc B}_m\cong \Gal(\cic 2{m+1}/
{\ma Q})\cong C_2\times C_{2^{m-1}}$ since ${\mc G}/{\mc B}_n$ is
noncyclic,

\item[$\bullet$] ${\mc T}_m:={\mc G}/{\mc C}_m\cong \Gal(\cic 2{m+2}^-/
{\ma Q})\cong C_{2^m}$ since it is cyclic and $-1\notin{\mc C}_n$.

\end{list}

\begin{gather*}
\xymatrix{
\cic 2{m+2}^+\ar@{-}[rr]^2\ar@{-}[dd]_2&&\cic 2{m+2}\ar@{-}[dd]^2\\
&\cic 2{m+2}^-\ar@{-}[ur]^2\ar@{-}[dl]_2\\
\cic 2{m+1}^+\ar@{-}[rr]_2\ar@{-}[ddd]_{2^{m-1}}&&\cic 2{m+1}
\ar@{-}[ddd]^{2^{m-1}}\\
\\
\\
{\ma Q}\ar@{-}[rr]_2\ar@{-}[uuurr]^{2^m}&&\cic 4{}}\\
\xymatrix{
&\cic 2{\infty}\ar@{-}[dl]_{{\mc A}_m}\ar@{-}[dd]^{{\mc B}_m}
\ar@{-}[dr]^{{\mc C}_m}\\
\cic 2{m+2}^+&&\cic 2{m+1}\\ &\cic 2{m+2}^-}
\end{gather*}

Since $[L_2:{\ma Q}]=2^m$ and $\big[U_2:\prod_{\pK|2}
U_{\pK}\big]=2^m$, it follows the following theorem.

\begin{theorem}\label{T3.8} If $[L_2:{\ma Q}]=2^m$, then
\las
\item $L_2=\cic 2{m+2}^+\iff$ for every place $\pK$ of $K$ with
$\pK|2$ we have $-1\in\N_{K_{\pK}/{\ma Q}_2} U_{\pK}$, that is,
$-1\in \bigcap_{\pK|2}\N_{K_{\pK}/{\ma Q}_2} U_{\pK}$.

\item $L_2=\cic 2{m+1}\iff \bigcap_{\pK|2}\N_{K_{\pK}/{\ma Q}_2}
U_{\pK}$ is not cyclic (automatically we have that $-1\notin
\bigcap_{\pK|2}\N_{K_{\pK}/{\ma Q}_2}$).

\item $L_2=\cic 2{m+2}^-\iff \bigcap_{\pK|2}\N_{K_{\pK}/{\ma Q}_2}$
is cyclic and $-1\notin \bigcap_{\pK|2}\N_{K_{\pK}/{\ma Q}_2}$. $\fin$
\end{list}
\end{theorem}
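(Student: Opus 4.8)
The proof is a matching argument resting on the structural facts already assembled. First I would recall, from Theorem~\ref{T3.7} and the description of $\Gal(\cic 2{\infty}/{\ma Q})\cong U_2\cong\{\pm1\}\times{\ma Z}_2$ given above, that $L_2$ is a subfield of $\cic 2{m+2}$ with $[L_2:{\ma Q}]=2^m$, and that the closed index-$2^m$ subgroups of $U_2$ are precisely ${\mc A}_m$, ${\mc B}_m$, ${\mc C}_m$, with fixed fields $\cic 2{m+2}^+$, $\cic 2{m+1}$ and $\cic 2{m+2}^-$ respectively. Hence $L_2$ is one of these three fields, and the whole problem reduces to deciding which of ${\mc A}_m,{\mc B}_m,{\mc C}_m$ is the subgroup of $U_2$ cut out by $L_2$.

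To locate that subgroup I would invoke the local reformulation preceding Theorem~\ref{T3.7} (an instance of Theorem~\ref{T2.5}): the norm group attached to $L_2$, read at the prime $2$, is ${\mc N}:=\prod_{\pK\mid 2}\N_{K_{\pK}/{\ma Q}_2}U_{\pK}\subseteq U_2$, and $[U_2:{\mc N}]=[L_2:{\ma Q}]=2^m$, so ${\mc N}\in\{{\mc A}_m,{\mc B}_m,{\mc C}_m\}$. The three candidates are then separated by two intrinsic invariants of ${\mc N}$. Since ${\mc A}_m=\{\pm1\}\times 2^m{\ma Z}_2$ is the only one of the three that contains the class $-1$ of complex conjugation, one gets $L_2=\cic 2{m+2}^+\iff -1\in{\mc N}$, which is just the statement that $L_2$ is real. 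Since (for $m\ge 2$) $U_2/{\mc B}_m\cong C_2\times C_{2^{m-1}}$ is the only non-cyclic quotient, one gets $\Gal(L_2/{\ma Q})$ non-cyclic $\iff{\mc N}={\mc B}_m\iff L_2=\cic 2{m+1}$; and the last alternative, $-1\notin{\mc N}$ together with $U_2/{\mc N}$ cyclic, leaves only ${\mc N}={\mc C}_m$, i.e.\ $L_2=\cic 2{m+2}^-$. These are exactly the three asserted equivalences.

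The step I expect to require the most care is the reformulation in part~(a) of $-1\in{\mc N}$ as the place-by-place condition ``$-1\in\N_{K_{\pK}/{\ma Q}_2}U_{\pK}$ for every $\pK\mid 2$''. One direction is free: if $-1$ lies in each local factor it certainly lies in the group ${\mc N}$ they generate. For the reverse one must understand how the closed subgroups $\N_{K_{\pK}/{\ma Q}_2}U_{\pK}$ of $U_2\cong\{\pm1\}\times{\ma Z}_2$ amalgamate, the delicate point being that $-1$ could a priori be manufactured from several factors none of which contains it; securing the equivalence therefore calls either for an analysis of which subgroups actually occur as local norm groups at $2$, or for reading the criterion directly off ${\mc N}=\prod_{\pK\mid 2}\N_{K_{\pK}/{\ma Q}_2}U_{\pK}$. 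Once this identification is in hand, parts~(b) and~(c) are the routine bookkeeping of the cyclic/non-cyclic dichotomy recorded above.
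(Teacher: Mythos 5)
Your argument coincides with the paper's: Theorem \ref{T3.8} is stated there without a separate proof, as an immediate consequence of the preceding classification of the closed index--$2^m$ subgroups of $U_2\cong\{\pm 1\}\times{\ma Z}_2$ into the three types ${\mc A}_m$, ${\mc B}_m$, ${\mc C}_m$ with fixed fields $\cic 2{m+2}^+$, $\cic 2{m+1}$, $\cic 2{m+2}^-$, together with the fact that the norm group of $L_2$ at $2$ has index $2^m$ in $U_2$; your matching of the three candidates by the two invariants (membership of $-1$, cyclicity of the quotient) is exactly that argument.

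The caveat you raise at the end is well taken, and it points to a defect in the theorem's statement rather than in your proof. The invariant that the matching argument actually controls is membership of $-1$ in the norm group ${\mc N}=\prod_{\pK\mid 2}\N_{K_{\pK}/{\ma Q}_2}U_{\pK}$ --- the product, which is what the paper computes as $H_2$ and invokes in the sentence immediately before the theorem --- whereas the theorem is phrased with the intersection. The two conditions are not interchangeable: writing $U_2=\overline{\langle a\rangle}\times\overline{\langle b\rangle}$ with $a=-1$ and $b$ a topological generator of $1+4{\ma Z}_2$, if two of the local norm groups are of the ``minus'' type $\overline{\langle ab^{2^{n_1}}\rangle}$ and $\overline{\langle ab^{2^{n_2}}\rangle}$ with $n_1<n_2$, then their product contains $b^{2^{n_1}+2^{n_2}}=b^{2^{n_1}(1+2^{n_2-n_1})}$, hence $b^{2^{n_1}}$ (as $1+2^{n_2-n_1}$ is a unit of ${\ma Z}_2$), hence $-1$, although neither factor contains $-1$. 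So ``$-1\in{\mc N}$'' does not reduce to the place-by-place condition of part (1) without an additional argument ruling out such configurations of local norm groups. Your proof establishes the version of the theorem with $\prod$ in place of $\bigcap$, which is the version the paper's own discussion supports; to obtain the printed version one must either supply that extra argument or correct $\bigcap$ to $\prod$. (The same replacement is needed in parts (2) and (3), where in addition ``cyclic'' must be read as referring to the quotient $U_2/{\mc N}\cong\Gal(L_2/{\ma Q})$ rather than to the subgroup itself, since ${\mc B}_m\cong{\ma Z}_2$ is procyclic while $U_2/{\mc B}_m$ is not cyclic; your reading is the correct one.)
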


\section{Some remarks on genus fields of number
fields}\label{S5}

Let $L/{\ma Q}$ be a finite Galois extension. Since $L/{\ma Q}$
is normal, $L$ is either totally real or totally imaginary. Let $J\colon
{\ma C}\lra{\ma C}$ be the complex conjugation. Since $J|_{\ma Q}=
\Id_{\ma Q}$ and $L/{\ma Q}$ is normal, we have $J(L)=L=\bar{L}$.
Hence $J|_L\in G:=\Gal(L/{\ma Q})$. Furthermore $J|_L$ has order
$o(J|_L)=1$ or $2$. Let $L^J$ be the fixed field of $L$ under the
action of $J$. We have $\Gal(L|L^J)=\langle J|_L\rangle \cong
\{1\}$ or $C_2$, the cyclic group of order $2$ and $[L:L^J]|2$.
Furthermore, $L^J\subseteq {\ma R}$.

Note that $L^J$ is neither necessarily normal over ${\ma Q}$ nor
totally real. For instance, if $L={\ma Q}(\zeta_3,\sqrt[3]{2})$.
\[
\xymatrix{
{\ma Q}(\sqrt[3]{2})\ar@{-}[rrr]^{C_2}_{\alpha}\ar@{-}[d]&&&
L={\ma Q}(\zeta_3,\sqrt[3]{2})\ar@{-}[d]_{\beta}^{C_3}\ar@{-}[dlll]\\
{\ma Q}\ar@{-}[rrr]&&&{\ma Q}(\zeta_3)}
\]
Then $\Gal(L/{\ma Q})=\langle\alpha,\beta\rangle=C_2\ltimes C_3\cong
S_3$, the symmetric group in $3$ elements. $L$ is totally imaginary
and $L^J={\ma Q}(\sqrt[3]{2})$, the extension ${\ma Q}(\sqrt[3]{2})/{\ma Q}$
is not normal and the $3$ embeddings are $\sqrt[3]{2}\lra
\begin{cases}\sqrt[3]{2}\\ \zeta_3\sqrt[3]{2}\\ \zeta_3^2\sqrt[3]{2}
\end{cases}$. In other words, with the usual meaning, $r_1=1$
and $r_2=1$.

When $L/{\ma Q}$ is abelian, then $\langle J|_L\rangle\lhd G$, $L^J/
{\ma Q}$ is a Galois extension and $L^J$ is totally real.

In the case of genus fields, we consider $K/{\ma Q}$ a finite extension
and let $K_H$ and $K_{H^+}$ be the Hilbert class field and the Hilbert
extended (narrow) class field of $K$ respectively. Then the genus field
$\g K$ is the maximum extension such that $K\subseteq \g K\subseteq
K_H$ with $\g K=KF$, $F/{\ma Q}$ abelian. In particular, $F=\g F$. The 
extended or narrow genus field $\ge K$ of $K$ is
the maximum extension such that $K\subseteq \ge K
\subseteq K_{H^+}$ with $\ge K=KL$ and $L/{\ma Q}$ is abelian.
In particular, $\ge {L}=L$. Recall that $\ge {L}$ is the maximum abelian
extension of ${\ma Q}$ such that $\ge {L}/L$ is unramfied at every finite prime
and $\g F$ is the maximum abelian extension of ${\ma Q}$ with $\g F/K$ 
unramified at every prime.

From the remarks above, it follows that $[\ge K:\g K]=1$ or $2$ for
every finite abelian extension $K/{\ma Q}$. Now,
we have $K_H\subseteq K_{H^+}$
and in fact $\Gal(K_{H^+}/K_H)\cong C_2^r$ for some $r\in{\ma N}
\cup \{0\}$. In our notation, we have that $F\subseteq L$ since
$KF/K$ is unramified and $F/{\ma Q}$ is abelian. On the other hand,
$L^J$ is totally real, $L^J/{\ma Q}$ is abelian and $KL^J/K$
is unramified at every prime. It follows that
\begin{gather*}
L^J\subseteq F\subseteq L.
\intertext{Since $F=\g F$ it follows that $[L:F]|2$ and therefore}
[\ge K:\g K]=[KL:KF]|[L:F]=1\text{\ or\ }2.
\end{gather*}

In short, we have

\begin{proposition}\label{P4.1}
For a finite extension $K/{\ma Q}$, we have $[\ge K:\g K]|2$. $\fin$
\end{proposition}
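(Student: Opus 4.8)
The plan is to realize $[\ge K:\g K]$ as a divisor of an index between the two base fields $F$ and $L$, and then to pin that index down by means of complex conjugation. Recall that $\g K=KF$ with $F/{\ma Q}$ maximal abelian subject to $KF\subseteq K_H$, while $\ge K=KL$ with $L/{\ma Q}$ maximal abelian subject to $KL\subseteq K_{H^+}$. Since $K_H\subseteq K_{H^+}$, the field $F$ is itself abelian over ${\ma Q}$ with $KF\subseteq K_{H^+}$, so the maximality of $L$ gives $F\subseteq L$ for free. The whole problem thus reduces to bounding $[L:F]$, and the idea is to trap $F$ between $L$ and its maximal totally real subfield.

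First I would bring in $J$, complex conjugation, acting on $L$. Because $L/{\ma Q}$ is abelian, $\langle J|_L\rangle$ is normal, the fixed field $L^J/{\ma Q}$ is again abelian and totally real, and $L$ is either totally real (so $L=L^J$) or a CM field (so $[L:L^J]=2$); in every case $[L:L^J]\mid 2$. The heart of the matter is the reverse containment $L^J\subseteq F$, for which it suffices to verify that $KL^J/K$ is abelian and unramified at every prime and then invoke the maximality of $F$ inside $K_H$. Abelianness is immediate, as $\Gal(KL^J/K)$ embeds into $\Gal(L^J/(L^J\cap K))$; and unramifiedness at the finite primes is inherited from $\ge K/K=KL/K$ since $L^J\subseteq L$.

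The main obstacle is the behaviour at the infinite primes, precisely where $\ge K/K$ is permitted to ramify but $\g K/K$ is not. Here I would exploit that $L^J$ is totally real: for a real place $v$ of $K$ with $K_v={\ma R}$, every embedding of $L^J$ into ${\ma C}$ already lands in ${\ma R}$, so each completion $(KL^J)_w$ with $w\mid v$ equals ${\ma R}\cdot\iota(L^J)\subseteq{\ma R}$ and remains real. Hence no real place of $K$ becomes complex in $KL^J$, i.e. $KL^J/K$ is unramified at the infinite primes as well. Combined with the finite-prime statement this yields $KL^J\subseteq K_H$, and maximality of $F$ forces $L^J\subseteq F$.

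Putting the two containments together gives $L^J\subseteq F\subseteq L$, so $[L:F]\mid[L:L^J]\mid 2$. It remains only to pass to the compositum: writing $KL=(KF)L$ and using that $L/(L\cap KF)$ is Galois (as $L/{\ma Q}$ is abelian), one gets $[\ge K:\g K]=[KL:KF]=[L:L\cap KF]$; and since $F\subseteq L\cap KF\subseteq L$ this divides $[L:F]$, hence divides $2$. The only genuinely delicate step is the infinite-prime computation showing that a totally real $L^J$ keeps real places real; everything else is formal juggling of maximality and of indices in a compositum.
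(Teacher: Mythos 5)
Your argument is correct and follows essentially the same route as the paper: establish $F\subseteq L$ by maximality, show $L^J\subseteq F$ because $L^J$ is totally real and $KL^J/K$ is abelian and unramified everywhere, and conclude from $L^J\subseteq F\subseteq L$ that $[\ge K:\g K]\mid[L:F]\mid[L:L^J]\mid 2$. The only difference is that you spell out the infinite-place verification (real places of $K$ stay real in $KL^J$), which the paper leaves implicit.
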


Now consider $K_i/{\ma Q}$, $i=1,2$, two finite extensions and let $K=K_1K_2$.
We have $K_i\subseteq K$ for $i=1,2$. On the other hand $\g{(K_1)}/K_1$ is 
unramified and abelian, it follows that $K\g{(K_1)}/KK_1=K$ is unramified
and abelian. Hence $K\g{(K_1)}\subseteq \g K$. It follows that $\g{(K_1)}
\subseteq \g K$. Similarly $\g{(K_2)}\subseteq \g K$. Therefore
$\g{(K_1)}\g{(K_2)}\subseteq \g K$.

\begin{remark}\label{R4.2}{\rm{
Not necessarily $\g{(K_1)}\g{(K_2)}=\g K$.
}}
\end{remark}

\begin{example}\label{E4.3}{\rm{
Let $p,q,p_1,q_1$ be four odd distinct primes. Let $K_1 =
\cic {pq}{}^+$, $K_2=\cic {p_1q_1}{}^+$. Then, using Dirichlet
characters, we have that $\g{(K_1)}\subseteq \cic {pq}{}$ and
$\cic {pq}{}/\cic {pq}{}^+$ is ramified at $\infty$, it follows
that $\g{(K_1)}=K_1$. Similarly $\g{(K_2)}=K_2$.

Furthermore, since $p\neq q$ (respectively $p_1\neq q_1$),
$\cic {pq}{}/\cic {pq}{}^+$ is ramified only at $\infty$, that is,
$\cic {pq}{}/\cic {pq}{}^+$ is unramified at every finite prime
(\cite[Teorema 5.3.2]{RzeVil2017}).

Now $K_1K_2=K=\cic {pq}{}^+\cic {p_1q_1}{}^+\subseteq
\cic {pqp_1q_1}{}$.
\begin{gather*}
\xymatrix{
\cic {pq}{}\ar@{-}[rrr]\ar@{-}[dd]_2&&& \cic {pqp_1q_1}{}\ar@{-}[dddd]\\
&&\cic {pqp_1q_1}{}^+\ar@{-}[ru]^2\ar@{-}[dl]_2\\
K_1=\cic {pq}{}^+\ar@{-}[r]\ar@{-}[dd]&K_1K_2=K\ar@{-}[dd]\\
\\
{\ma Q}\ar@{-}[r]&K_2=\cic {p_1q_1}{}^+\ar@{-}[rr]_2&&
\cic {p_1q_1}{}}
\intertext{We have that $\cic {pqp_1q_1}{}^+/K$ is unramified since $p$
is unramified in $\cic {pq}{}/\cic{pq}{}^+$ and thus}
e_p(\cic{pqp_1q_1}{}|{\ma Q})=p-1=e_p(\cic{pq}{}^+|{\ma Q})=e_p(K|{\ma Q}).
\end{gather*}

The same holds for $q,p_1$ and $q_1$. Now, $\infty$ is ramified
in $\cic {pqp_1q_1}{}/\cic{pqp_1q_1}{}^+$.
It follows that $\g K=\cic {pqp_1q_1}{}^+$ and that $[\g K:\g{(K_1)}
\g{(K_2)}]=2>1$.
}}
\end{example}

\begin{remark}\label{R4.4-1}{\rm{
For any two finite abelian extensions $K_i/{\ma Q}$, $i=1,2$ we
have $\ge K=\ge{(K_1)}\ge{(K_2)}$ where $K=K_1K_2$ (see
\cite{BaMoReRzVi2018}).
}}
\end{remark}

\begin{theorem}\label{T4.4} Let $K_i/{\ma Q}$, $i=1,2$ be two
finite abelian extensions and let $K=K_1K_2$. Then
\[
[\g K:\g{(K_1)}\g{(K_2)}]|2.
\]
\end{theorem}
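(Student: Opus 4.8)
The plan is to work inside $G:=\Gal(\ge K/{\ma Q})$, which is abelian because $K=K_1K_2$ is abelian over ${\ma Q}$, and hence so are $\g K$, $\ge K$ and all the $\g{(K_i)},\ge{(K_i)}$. Two facts from the text assemble the relevant tower: by Remark \ref{R4.4-1} we have $\ge K=\ge{(K_1)}\ge{(K_2)}$, and the discussion preceding Remark \ref{R4.2} gives $\g{(K_1)}\g{(K_2)}\subseteq \g K$. Thus
\[
\g{(K_1)}\g{(K_2)}\subseteq \g K\subseteq \ge K=\ge{(K_1)}\ge{(K_2)},
\]
and every field in sight is a subfield of $\ge K$, Galois over ${\ma Q}$.

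First I would translate the tower into subgroups of $G$. Put $N_i:=\Gal(\ge K/\ge{(K_i)})$, $M_i:=\Gal(\ge K/\g{(K_i)})$ and $P:=\Gal(\ge K/\g K)$. Then $N_i\subseteq M_i$ with $[M_i:N_i]=[\ge{(K_i)}:\g{(K_i)}]\,|\,2$ by Proposition \ref{P4.1}, and $|P|=[\ge K:\g K]\,|\,2$ again by Proposition \ref{P4.1}. Since $\ge{(K_1)}\ge{(K_2)}=\ge K$ we have $N_1\cap N_2=1$, and since $\g{(K_1)}\g{(K_2)}\subseteq\g K$ we have $P\subseteq M_1\cap M_2$. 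The map $M_1\cap M_2\to (M_1/N_1)\times(M_2/N_2)$, $x\mapsto(xN_1,xN_2)$, has kernel contained in $N_1\cap N_2=1$, so it is injective and $|M_1\cap M_2|\,|\,4$. Because $\g{(K_1)}\g{(K_2)}$ corresponds to $M_1\cap M_2$ and $\g K$ to $P$, the quantity to be bounded is
\[
[\g K:\g{(K_1)}\g{(K_2)}]=[M_1\cap M_2:P]=\frac{|M_1\cap M_2|}{|P|}.
\]
When $|M_1\cap M_2|\le 2$ this is at most $2$ and we are done, so the whole problem concentrates in the case $|M_1\cap M_2|=4$, where I must show $|P|=2$.

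The main obstacle is precisely this case, and I expect to resolve it with the real/imaginary dichotomy used in Section \ref{S5}: every field abelian over ${\ma Q}$ is totally real or totally imaginary, since complex conjugation $J$ is a single element of the abelian Galois group with $J^2=1$. If $|M_1\cap M_2|=4$ then $[M_i:N_i]=2$, i.e. $\ge{(K_i)}\neq\g{(K_i)}$ for both $i$; as $\g{(K_i)}$ is the maximal subfield of $\ge{(K_i)}$ unramified over $K_i$ at \emph{every} place, this forces $\ge{(K_i)}/K_i$ to ramify at some infinite place. By the dichotomy this happens exactly when $K_i$ is totally real and $\ge{(K_i)}$ is totally imaginary. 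I would then propagate this to the compositum: $K=K_1K_2$ is a compositum of totally real fields, hence totally real, while $\ge K\supseteq\ge{(K_1)}$ contains a totally imaginary field and is abelian over ${\ma Q}$, hence totally imaginary. Therefore $\ge K/K$ ramifies at the infinite places, so $\ge K\not\subseteq K_H$ and $\g K\subsetneq\ge K$; by Proposition \ref{P4.1} this yields $|P|=[\ge K:\g K]=2$.

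Combining the two cases, $[\g K:\g{(K_1)}\g{(K_2)}]=|M_1\cap M_2|/|P|$ equals $4/2=2$ when $|M_1\cap M_2|=4$ and divides $2$ otherwise, which proves the claim. The only delicate point is the equivalence ``$[\ge{(K_i)}:\g{(K_i)}]=2\iff K_i$ totally real and $\ge{(K_i)}$ totally imaginary'' together with its transfer to $K$; everything else is the formal group-theoretic bookkeeping of the tower.
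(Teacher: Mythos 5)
Your argument is correct, but it proceeds along a genuinely different route from the paper's. The paper uses the identity $\g K=L^+K$ (with $L=\ge K=L_1L_2$ and $\g{(K_i)}=L_i^+K_i$) to reduce the claim to $[L^+:L_1^+L_2^+]\,|\,2$, and then proves that divisibility by a direct index computation in $\Gal(\cic n{}/{\ma Q})$: writing $S_i=\Gal(\cic n{}/L_i)$ and $I=\langle J\rangle$, it shows $[S_1I\cap S_2I:(S_1\cap S_2)I]$ divides $|I|=2$ by a chain of order formulas. You instead work in $\Gal(\ge K/{\ma Q})$, apply Proposition \ref{P4.1} to $K_1$ and $K_2$ to get the injection $M_1\cap M_2\hookrightarrow (M_1/N_1)\times(M_2/N_2)$ and hence $|M_1\cap M_2|\,|\,4$, and then dispose of the extremal case $|M_1\cap M_2|=4$ by the real/imaginary dichotomy: both $\ge{(K_i)}/\g{(K_i)}$ being nontrivial forces each $K_i$ to be totally real and each $\ge{(K_i)}$ totally imaginary, whence $K$ is totally real, $\ge K$ is totally imaginary, $\ge K\not\subseteq K_H$, and $[\ge K:\g K]=2$, so the quotient $|M_1\cap M_2|/|P|$ still divides $2$. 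Both proofs lean on Remark \ref{R4.4-1} ($\ge K=\ge{(K_1)}\ge{(K_2)}$). The paper's computation is self-contained and exhibits the obstruction explicitly as a subquotient of $\langle J\rangle$ without invoking Proposition \ref{P4.1}; your version is shorter on computation, reuses Proposition \ref{P4.1} three times, and has the merit of locating the only possible failure of $\g K=\g{(K_1)}\g{(K_2)}$ in the simultaneous ramification at infinity of both $\ge{(K_i)}/K_i$. The steps you flag as delicate do hold: for $K_i$ abelian over ${\ma Q}$, the extension $\ge{(K_i)}/K_i$ is unramified at all finite places, so $\ge{(K_i)}\neq\g{(K_i)}$ forces ramification at a real place, and the totally real/totally imaginary alternative for Galois extensions of ${\ma Q}$ does the rest.
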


\begin{proof}
In general we consider a finite abelian extension $K/{\ma Q}$.
Let $L=\ge K$. We have $\g K=L^+ K$ (see \cite{BaMoReRzVi2018}).
Let $K=K_1K_2$. Then $\ge K=\ge{(K_1)}\ge{(K_2)}$. 
Therefore $L=L_1L_2$ and $\g K=L^+K$,
$\g {(K_1)}\g{(K_2)}=L_1^+K_1L_2^+K_2=L_1^+L_2^+K$. Hence
\[
[\g K:\g{(K_1)}\g{(K_2)}]=[L^+K:L_1^+L_2^+K]|[L^+:L_1^+L_2^+].
\]

To prove the result, it suffices to show that for two finite abelian extensions
$L_i/{\ma Q}$, $i=1,2$, and for $L=L_1L_2$, we have
$[L^+:L_1^+L_2^+]|2$.

In general, we have $L^+=L\cap \cic n{}^+=L\cap \cic n{}^J$ for
$L\subseteq \cic n{}$. In particular, if $S:=\Gal(\cic n{}/L)$, $L^+=L\cap
\cic n{}^+=\cic n{}^S\cap \cic n{}^I=\cic n{}^{SI}$ where $I=\langle
J\rangle$ and thus $\Gal(\cic n{}/L^+)=SI$.

Let $S_i:=\Gal(\cic n{}/L_i)$, $i=1,2$. Since $L=L_1L_2$, we have
$S=S_1\cap S_2$. We also have
\begin{gather}
L_1^+L_2^+=\cic n{}^{S_1I}\cic n{}^{S_2I}=\cic n{}^{S_1I\cap S_2I}
\subseteq L^+=\cic n{}^{SI}.\nonumber
\intertext{Therefore}
\Gal(L^+/L_1^+L_2^+)\cong\frac{\Gal(\cic n{}/L_1^+L_2^+)}{\Gal(\cic n{}/L^+)}
\cong \frac{S_1I\cap S_2I}{SI}=\frac{S_1I\cap S_2I}{(S_1\cap S_2)I}.\label{Eq4.1}
\end{gather}

Now
\begin{align*}
|S_1I\cap S_2I|&=\frac{|S_1I||S_2I|}{|S_1S_2I|}=\frac{\frac{|S_1||I|}{
|S_1\cap I|}\frac{|S_2||I|}{|S_2\cap I|}}{\frac{|S_1S_2||I|}{|S_1S_2\cap I|}}
=\frac{\frac{|S_1||S_2||I|^2}{|S_i\cap I||S_2\cap I|}}{\frac{|S_1||S_2|}{
|S_1\cap S_2|}\frac{|I|}{|S_1S_2\cap I|}}\\
&=\frac{|S_1\cap S_2||S_1S_2\cap I|}{|S_1\cap I||S_2\cap I|}|I|.
\end{align*}
On the other hand $|(S_1\cap S_2)I|=|SI|=\frac{|S||I|}{|S\cap I|}$.
It follows that
\[
[S_1I\cap S_2I:(S_1\cap S_2)I]=\frac{|S_1\cap S_2||S_1S_2\cap I|}{
|S_1\cap I||S_2\cap I|}\frac{|S\cap I|}{|S||I|}|I|=
\frac{|S_1S_2\cap I||S\cap I|}{|S_1\cap I||S_2\cap I|}.
\]

Now $S\cap I\subseteq S_2\cap I$. Let $\alpha=[S_2\cap I:S\cap I]
\in{\ma N}$. Then
\begin{gather*}
[S_1I\cap S_2I:(S_1\cap S_2)I]=\frac{1}{\alpha}\frac{|S_1S_2\cap I|}
{|S_1\cap I|}=\frac{1}{\alpha}\frac{\frac{|S_1S_2||I|}{|S_1S_2I|}}
{\frac{|S_1||I|}{|S_1I|}}=\frac{1}{\alpha}\frac{|S_1S_2||S_1I|}{
|S_1S_2I||S_1|}.
\end{gather*}

We have $S_1S_2\subseteq S_1S_2I$. Let $\beta=[S_1S_2I:S_1S_2]
\in{\ma N}$. It follows that
\begin{gather*}
[S_1I\cap S_2I:(S_1\cap S_2)I]=\frac{1}{\alpha\beta}\frac{|S_1I|}{|S_1|}=
\frac{1}{\alpha\beta}\frac{|S_1||I|}{|S_1||S_1\cap I|}=\frac{1}{\alpha\beta}
\frac{|I|}{|S_1\cap I|}=\frac{\gamma}{\alpha\beta},
\end{gather*}
with $\gamma=[I:S_1\cap I]||I|$. Therefore
$[S_1I\cap S_2I:(S_1\cap S_2)I]=\frac{\gamma}{\alpha\beta}\in{\ma N}$
and $[S_1I\cap S_2I:(S_1\cap S_2)I]||I|=1$ or $2$. It follows
that
\[
[L^+:L_1^+L_2^+]|2\quad\text{and}\quad [\g K:\g{(K_1)}\g{(K_2)}]|2.
\]
\end{proof}

\end{document}